\documentclass[smallextended,final,nospthms]{svjour3}
\usepackage{mathptmx}
\usepackage{amsmath,amssymb,amsfonts,amsthm,amstext,verbatim}
\usepackage{enumerate,color,graphicx,stmaryrd,psfrag} 
\usepackage{url}
\usepackage{mdwlist}   
\usepackage{mathtools} 
\usepackage{wasysym}   

\RequirePackage[colorlinks,citecolor=blue,urlcolor=blue]{hyperref}

\newcommand\sC{{\mathcal C}}
\newcommand\sG{{\mathcal G}}

\newcommand\oo{\infty}

\newcommand\De{\Delta}

\newcommand\rad{\text{\rm rad}}
\renewcommand\b{\beta}

\newcommand\sT{{\mathcal T}}
\newcommand\sN{{\mathcal N}}
\newcommand\g{\gamma}
\renewcommand\a{\alpha}
\newcommand\ga{\gamma}
\newcommand\pd{\partial}

\newcommand\es{\varnothing}
\newcommand\Om{\Omega}
\newcommand\om{\omega}
\newcommand\Ga{\Gamma}
\newcommand\La{\Lambda}
\newcommand\Si{\Sigma}

\newcommand\sD{{\mathcal D}}

\newcommand\de{\delta}
\newcommand\resp{respectively}
\newcommand\lra{\leftrightarrow}
\newcommand\xlra{\xleftrightarrow}
\renewcommand\th{\theta}
\newcommand\bxp{box-crossing property}
\newcommand\BXP{\mbox{\rm BXP}}
\newcommand\bp{\mathbf{p}}

\newcommand\tri{{\triangle}}
\newcommand\hex{{\hexagon}}

\newcommand\Cv{C_{\text{\rm v}}}
\newcommand\Ch{C_{\text{\rm h}}}

\newcommand\bH{{\mathbf H}}
\newcommand\bR{{\mathbf R}}
\newcommand\bL{{\mathbf L}}
\newcommand\bX{{\mathbf X}}
\newcommand\bM{{\mathbf M}}
\newcommand\lest{\le_{\text{\rm st}}}
\newcommand\bigmid{\,\big|\,}

\newcommand\sP{{\mathcal P}}
\newcommand\sR{{\mathcal R}}

\newcommand\wt{\widetilde}

\newcommand\eps{\epsilon}

\newcommand\simh{\sim_{\text{\rm h}}}
\newcommand\simv{\sim_{\text{\rm v}}}
\newcommand\lan{\langle}
\newcommand\ran{\rangle}

\newcommand{\RR}{\mathbb{R}}     
\newcommand{\NN}{\mathbb{N}}     
\newcommand{\ZZ}{\mathbb{Z}}     

\newcommand{\PP}{\mathbb{P}}     
\newcommand{\comp}{\circ}        
\newcommand{\Dom}{\mathcal{D}}   
\newcommand{\B}{\mathcal{B}}     

\newcommand{\col}{\sC}
\newcommand{\Ann}{\mathcal{A}}


\newcommand\cd{c_d}

\newcommand\prob{black} 

\def\mik{1}
\newcommand\cpsfrag[2]{\ifnum\mik=1\psfrag{#1}{#2}\fi}

\newenvironment{romlist}{\begin{list}{\rm(\roman{mycount})}%
   {\usecounter{mycount}\leftmargin=1cm\labelwidth=1cm\itemsep 0pt}}{\end{list}}

\newenvironment{letlist}{\begin{list}{\rm(\alph{mycount})}%
   {\usecounter{mycount}\leftmargin=1cm\labelwidth=2cm\itemsep 0pt}}{\end{list}}

\newtheorem{thm}{Theorem}
\newtheorem{lemma}[thm]{Lemma}
\newtheorem{prop}[thm]{Proposition}
\newtheorem{cor}[thm]{Corollary}

\newtheorem{conj}[thm]{Conjecture}
\newtheorem{definition}[thm]{Definition}

\newtheorem{rem}[thm]{Remark}

\numberwithin{equation}{section}
\numberwithin{thm}{section}
\numberwithin{figure}{section}

\newcounter{mycount}

\newcommand{\Arm}{A}
\newcommand{\logeqv}{\approx}
\newcommand{\eqv}{\asymp}

\newcommand{\balpha}{\boldsymbol\alpha}
\newcommand{\bbeta}{\boldsymbol\beta}
\newcommand{\di}{\Diamond}
\newcommand\la{\lambda}
\newcommand\bac{bounded-angles property}
\newcommand\BAC{\mbox{\rm BAP}}
\newcommand\SGP{\mbox{\rm SGP}}
\newcommand\sgp{square-grid property}
 
\newcommand\para{parallel} 
\newcommand\stt{star--triangle transformation}
\newcommand\ZZO{\NN_0}
\newcommand\bef{\xi}
\newcommand\gest{\ge_{\text{\rm st}}}

\newcommand\nxlra[1]{\mathrel{\mathpalette\concel{\xleftrightarrow{#1}}}}
\newcommand\dxlra[1]{\xlra{#1}\!\!\!{}^*\,}
\newcommand\concel[2]{\ooalign{$\hfil#1\mkern0mu/\hfil$\crcr$#1#2$}}
\newcommand\ext{\text{\rm ext}}

\begin{document}
\title{Bond percolation on isoradial graphs:\\
Criticality and universality}
\titlerunning{Bond percolation on isoradial graphs}

\author{Geoffrey R. Grimmett \and Ioan Manolescu}
\authorrunning{Grimmett and Manolescu}

\institute{G. R. Grimmett \at Statistical Laboratory, Centre for
Mathematical Sciences, Cambridge University, Wilberforce Road,
Cambridge CB3 0WB, UK.
              \email{\url{g.r.grimmett@statslab.cam.ac.uk}},           
URL: {\url{http://www.statslab.cam.ac.uk/~grg/}}
\and
           I. Manolescu \at
            Statistical Laboratory, Centre for
Mathematical Sciences, Cambridge University, Wilberforce Road,
Cambridge CB3 0WB, UK. \emph{Present address\/}: 
Section
de Math\'ematiques, Universit\'e de Gen\`eve, 2--4 rue de Li\`evre,
1211 Gen\`eve 4, Switzerland. 
\email{\url{i.manolescu@statslab.cam.ac.uk}}
}

\date{1 April 2012, revised 6 March 2013}

\maketitle

\begin{abstract}
In an investigation of percolation on isoradial graphs, we prove the
criticality of canonical bond percolation  on  isoradial embeddings
of planar graphs, thus extending celebrated earlier results for homogeneous
and inhomogeneous square, triangular, and other lattices. 
This is achieved via the \stt, by transporting the box-crossing property  
across the family of  isoradial graphs. 
As a consequence, we obtain the universality of these models \emph{at}
the critical point, in the sense that the 
one-arm and $2j$-alternating-arm critical exponents (and therefore also
the connectivity and volume exponents) are constant
across the family of such percolation processes. The isoradial graphs in question are those
that satisfy certain weak conditions on their embedding
 and on their track system.
This class of graphs includes, for example, isoradial
embeddings of periodic graphs, and graphs derived from 
rhombic Penrose tilings.  
\keywords{Bond percolation  \and isoradial graph \and rhombic tiling
\and Penrose tiling \and
inhomogeneous percolation \and universality
\and critical exponent
\and  arm exponent
\and scaling relations
\and box-crossing
\and star--triangle transformation
\and Yang--Baxter equation}
\subclass{60K35 \and 82B43}
\end{abstract}

\section{Introduction}\label{sec:intro}

Two-dimensional disordered systems, when critical, are presumed to have properties
of  conformality and universality. Rigorous evidence
for this classic paradigm from conformal field theory includes the
recent analysis of the critical Ising model by Chelkak and Smirnov
\cite{Chelkak-Smirnov2,Smirnov1} on a family of graphs known as
`isoradial'. On the one hand, such isoradial graphs are especially harmonious
in a theory of discrete holomorphic functions (introduced by
Duffin, see 
\cite{Chelkak-Smirnov3,Duff,Merc}), and
on the other they are well adapted to transformations of star--triangle type
(explained by Kenyon \cite{Ken02}). These two properties resonate
with the intertwined concepts of conformality and universality. 

It is shown here that, for a broad class $\sG$ of isoradial graphs, the associated
bond percolation model is critical, and furthermore certain of its critical exponents are
constant across $\sG$. 
The class $\sG$ includes many specific models studied earlier, but is 
much more extensive than this restricted class.
In earlier related work of Kesten \cite{Kesten80,Kesten_book}, 
Wierman \cite{Wierman81} and others, it
has been assumed that the graph under study has certain invariances
under, for example,  translation, rotation and/or reflection. 
Such assumptions of regularity play no role in the current work, where the key assumptions are
that the graph under study is isoradial, and satisfies two weak conditions
on its embedding and on its so-called track system,
namely  the so-called \bac\ and the \sgp. 
The first of these is a condition of non-degeneracy,
and the second requires the graph to contain a square grid within
its track system. (Formal definitions are deferred to Sections \ref{sec:isog} and
\ref{sec:track}.)

In advance of the formalities of Section \ref{sec:def}, we explain the term `isoradial'.
Let $G$ be a planar graph embedded in the plane $\RR^2$. It is called \emph{isoradial}
if there exists $r>0$ such that, for every face $F$ of $G$, the vertices of $F$ lie on a circle 
of radius $r$ with centre in the interior of $F$. Note that isoradiality is a property of
the planar embedding of $G$ rather than of $G$ itself. By rescaling the embedding of $G$,
we may assume $r=1$.

It was noted by Duffin \cite{Duff}
that isoradial graphs are in two--one correspondence with rhombic tilings of the plane
(the name `isoradial' was coined later by Kenyon). While details of this correspondence
are deferred to  Section \ref{sec:def}, we highlight one fact here. Let $G=(V,E)$ be isoradial. 
An edge $e \in E$ lies in two faces, and therefore two circumcircles. 
As illustrated in Figure \ref{fig:isoradial}, $e$ subtends the same angle $\th_e \in (0,\pi)$
at the centres of these circumcircles, and we define $p_e \in (0,1)$ by
\begin{equation}\label{G100}
\frac{p_e}{1-p_e} = \frac{ \sin(\frac13[\pi - \th_e])}{\sin(\frac13 \th_e)}. 
\end{equation}
We consider bond percolation on $G$ with edge-probabilities $\bp=(p_e: e \in E)$.
Our main results are that (subject to two assumptions on $G$) these percolation
models are critical, and their critical exponents (\emph{at} the critical point)
are universal across the class of such models. More precisely,
the exponents $\rho$, $\eta$, $\delta$ and the $2j$-alternating arm
exponents $\rho_{2j}$ are constant across the class (assuming they exist). 
See \cite{Grimmett_Percolation} for an account of
the general theory of percolation.

\begin{figure}[htb]
 \centering
    \includegraphics[width=0.6\textwidth]{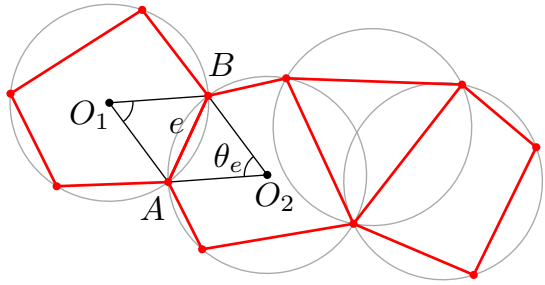}
  \caption{Part of an isoradial graph. Each face is inscribed in a circle of radius $1$. 
  With the edge $e$, we associate the angle $\theta_e$. 
This and later figures are best viewed in colour.}
  \label{fig:isoradial}
\end{figure}

Our methods
work only \emph{at} the critical point, and we are unable
to extend such universality to exponents \emph{near} the critical point, such as 
$\gamma$ and $\beta$. Neither have we any progress to report on the
\emph{existence} of critical exponents for these models.
Essentially the only model for which existence has been proved is site
percolation on the triangular lattice (see \cite{Smirnov-Werner}), and our 
numerical knowledge
of the exponents is based in this case on the proof by Smirnov \cite{Smirnov}
of the convergence in the scaling limit
to SLE$_6$ (see also the more recent works \cite{CamNew,WW_park_city}). 

The principal tool in the current work is the \stt, otherwise known amongst physicists
as the Yang--Baxter equation. This was discovered by Kennelly \cite{Ken} in 1899
in the context of electrical networks, and was adapted in 1944 by Onsager \cite{OnsI}
to the Ising model in conjunction with Kramers--Wannier duality. It is
a key element in the work of Baxter \cite{Baxter_book} on exactly solvable models
in statistical mechanics
(see \cite{McCoy_book,Perk-AY} for scientific and historical accounts).
Sykes and Essam \cite{Sykes_Essam} used the \stt\ 
to predict the critical surfaces of inhomogeneous bond percolation on
triangular and hexagonal lattices, and it is also a tool in the study of
the random-cluster model \cite{Grimmett_RCM}, and
of the dimer model \cite{Ken02}.

The \stt\ is especially well matched to isoradiality. The exchange of a star
with a triangle is equivalent to a local move in the associated rhombic tiling.
In the case of percolation, this move replaces one triple of edge-parameters by another triple
given according to \eqref{G100}, and these
are precisely the triples that satisfy the \stt.
This general observation is valid for several models of statistical physics,
including the Ising and Potts models and the random-cluster model
(see \cite{Ken02}).   Note, however, that the \stt\ contributes
also to the analysis of models on non-isoradial graphs
(see \cite{Baxter_book}).

The \stt\ is used here to move between models. 
Using a coupling of probability measures on different isoradial
graphs, one may show as in \cite{GM1,GM2} that the \stt\ preserves
open connections in the percolation model, and may therefore be used to transport
the so-called `\bxp' from one model to another. The \emph{\bxp} is the statement that
any rectangle in $\RR^2$ of given aspect-ratio is crossed by an open
path, with probability bounded away from $0$ uniformly in the size, 
position, and rotation of the box. This property has played an important
role in the theory of criticality and scaling in two-dimensional percolation,
as described in Kesten's book \cite{Kesten_book}, and more recently
in \cite{WW_park_city}. 

This is a continuation 
and extension of the work begun in \cite{GM1,GM2}. The main idea is
to use the \stt\ to transport the \bxp\ between graphs. By 
the  Russo--Seymour--Welsh (RSW) 
lemma of \cite{Russo,Seymour-Welsh}, homogeneous percolation on the square
lattice (with $p=\frac12$) has the \bxp, and this 
property may be transported, by repeated applications
of  \stt s, to the isoradial
graphs considered here.  Once we know that an isoradial graph and its 
(isoradial) dual graph have
the \bxp, criticality follows as in \cite[Props 4.1--4.2]{GM1}. The above universality
follows by related arguments.

We return briefly to the \stt\ and the Ising model. 
The Ising model differs from percolation and the random-cluster model in the following
significant regard. Whereas the Ising model may be transformed thus for any
triple of edge-interactions, the percolation and random-cluster models
may be transformed only when the interaction-triple satisfies a certain equation
connected to the critical surface in the translation-invariant lattice model 
(see \cite[Thm 1.1]{GM1}).
For this reason, the \stt\ enables a deeper understanding of the Ising model
than of its cousins. Its application
to the Ising model has been developed in a series of papers
by Baxter and others under the title `$Z$-invariant
Ising model' beginning with \cite{Bax86} and continued in later works.
We do not attempt a full bibliography here, but mention the
paper \cite{AuP07} by Au-Yang and Perk,  the recent papers
of Chelkak and Smirnov \cite{Chelkak-Smirnov2,Smirnov1}, as well
as the dimer analysis \cite{BdTB} of Boutillier and de Tili\`ere.
 
This paper is organized as follows. 
Isoradial graphs and the \bxp\ are summarised in
Section \ref{sec:def}, and our main results
concerning criticality and universality
are stated in Section \ref{sec:main}.
Section \ref{sec:isoradial} contains a fuller description of 
isoradiality and its connections to
rhombic tilings. 
The \stt\ is described in Section \ref{sec:stt}. 
The \bxp, Theorem \ref{main},   is then proved in two steps:
first in Section \ref{sec:proof1} for the special
case of isoradial square lattices, and then in Section \ref{sec:proof2} 
where it is explained how this case may be extended to
general isoradial graphs satisfying the given conditions.
Theorem \ref{main3}, concerning universality, is proved in
Section \ref{sec:arm-exps}.

The set of integers is denoted by $\ZZ$, of natural numbers by $\NN$, 
and of non-negative integers by $\ZZO$.

\section{Isoradiality and the \bxp}\label{sec:def}

\subsection{Isoradial graphs}\label{sec:isog}

Let $G=(V,E)$ be a planar graph embedded in the plane $\RR^2$
(it is assumed that the edges are embedded as straight-line segments,
with intersections only at vertices). 
It is called \emph{isoradial}
if, for every bounded face $F$ of $G$, the vertices of $F$ lie on a circle 
of (circum)radius $1$ with centre in the interior of $F$. 
In the absence of a contrary assumption,
we shall assume that isoradial graphs are infinite
with bounded faces. 
While isoradiality is a property of
the planar embedding of $G$ rather than of $G$ itself, we shall sometimes speak of
an isoradial graph.

Let $G=(V,E)$ be isoradial.
Each edge $e = \lan A, B\ran$ of $G$ lies in two faces, 
with circumcentres $O_1$ and $O_2$. 
Since the two circles have equal radii, the quadrilateral $AO_1BO_2$ is a rhombus. 
Therefore, the angles $AO_1B$ and $BO_2A$ are equal, and we write
$\th_e \in (0,\pi)$ for their common value. See Figure 
\ref{fig:isoradial}.

\begin{definition}\label{def:bac}
  Let $\eps>0$. The isoradial graph $G$ is said to have the \emph{\bac}
  \BAC$(\eps)$ if 
  \begin{equation}\label{G310}
    \th_e \in [\eps, \pi-\eps], \qquad e \in E.
  \end{equation}
  It is said to have, simply, the \emph{\bac} if it satisfies \BAC$(\eps)$ for
  some $\eps>0$.
\end{definition}

All isoradial graphs of this paper will be assumed to 
have the \bac. 
The area of the rhombus $AO_1BO_2$
equals $\sin \th_e$ and, under \BAC$(\eps)$,
\begin{equation}\label{rhombic-area}
  \sin\eps \le \text{Area}(AO_1BO_2)  \le 1.
\end{equation}

We  shall study isoradial graphs satisfying a further property called the `\sgp'.  
We defer a discussion of the \sgp\ until Section  \ref{sec:track}, 
since it necessitates a fuller account of isoradial graphs
and their relationship to rhombic tilings of the plane and the associated `track systems'
introduced by de Bruijn \cite{deB1,deB2}. 
Although one may construct isoradial graphs that do not have the \sgp,
the property is satisfied by many graphs arising from the rhombic tilings that we have encountered in 
the literature on plane tilings.

From the many examples of isoradial graphs with the \sgp,
we select for illustration the set of isoradial embeddings of the square lattice, 
and more generally isoradial embeddings of any connected periodic graph. 
It is not the automorphism group of the graph that is relevant here,
but rather the geometry of its track system. For example, isoradial graphs arising from
rhombic  Penrose tilings have the \sgp. 
For definitions and further discussion, see Section \ref{sec:isoradial}. 

\subsection{Percolation on isoradial graphs}
Bond percolation on a graph $G=(V,E)$ has sample space $\Om:=\{0,1\}^E$,
to which is assigned a product measure $\PP_\bp$ with $\bp=(p_e: e\in E) \in [0,1]^E$. 
When $G$ is isoradial, there is a canonical product measure, denoted $\PP_G$, associated with its embedding,
namely that with $p_e = p_{\th_e}$ where
$\th_e$ is given in the last section and illustrated in Figure \ref{fig:isoradial}, and
\begin{equation}\label{G101} 
\frac{p_\th}{1-p_\th} = \frac{ \sin(\frac13[\pi - \th])}{\sin(\frac13\th)}. 
\end{equation}
Note that $p_\th+p_{\pi-\th}=1$, and  that
$G$ has the \bac\ \BAC$(\eps)$ if and only if 
\begin{equation}\label{G226}
p_{\pi-\eps} \le p_e \le p_\eps, \qquad e \in E.
\end{equation}

The graph $G$ has a dual graph $G^*=(V^*,E^*)$.  Since
$G$ is isoradial, so is $G^*$. 
This fact is discussed in greater depth in
Section \ref{sec:isoradial}, but it is clear from Figure \ref{fig:isoradial}
that $G^*$ may be embedded in $\RR^2$ with vertices at the
centres of circumcircles, and edges
between circumcentres of abutting faces. 
Let $e^*\in E^*$ be the dual edge (in this embedding) crossing the primal
edge $e\in E$. Then  $\th_{e^*} = \pi-\th_e$, so that $p_e + p_{e^*} = 1$
by \eqref{G101}. In conclusion, the canonical measure $\PP_{G^*}$ is
dual to the primal measure $\PP_G$.  
By \eqref{G226}, 
\begin{equation}\label{G455}
\mbox{$G^*$ satisfies \BAC$(\eps)$ if and only if
$G$ satisfies \BAC$(\eps)$.}
\end{equation}
See, for example, \cite[Sect.\ 11.2]{Grimmett_Percolation} for an account of graphical duality.

Here is some basic notation. Let $G=(V,E)$ be a graph, not necessarily
isoradial or even planar, and let $\om \in \Om := \{0,1\}^E$.
An edge $e$ is called \emph{open} (or \emph{$\om$-open}) if $\om(e)=1$, and \emph{closed}
otherwise. A path of $G$ is called \emph{open} if all its edges are open.
For $u,v \in V$,
we say $u$ \emph{is connected to} $v$ (in $\omega$), 
written $u \lra v$ (or $u\xleftrightarrow {G,\omega} v$), 
if $G$ contains an open path from $u$ and $v$;
if they are not connected, we write $u \nxlra{\ \ G, \om} v$.
An \emph{open cluster} of $\om$ is a maximal set of pairwise-connected vertices. 
Let $C_v =\{u\in V: u \lra v\}$ denote the open cluster containing the vertex $v$, and write  
$v \lra \oo$ if $|C_v|=\oo$. 

For $\om\in\Om$ and $e \in E$, let $\om^*(e^*)=1-\om(e)$,
so that $e^*$ is open in the dual graph $G^*$ (written \emph{open$^*$})
if $e$ is closed in the primal graph $G$.

\subsection{The \bxp}\label{sec:thebxp}

Let $G=(V,E)$ be a countably infinite, connected graph embedded in the plane, and
let $\PP$ be a probability measure on $\Om:=\{0,1\}^E$.  The `\bxp' is concerned
with the probabilities of open crossings of domains in $\RR^2$. This
has proved to be a very useful property indeed for the study of infinite
open clusters in $G$; see, for example, \cite{Grimmett_Graphs,GM2,Kesten_book}.

A \emph{(planar) domain} $\sD$ is an open, simply connected subset of $\RR^2$ which,
for simplicity, we assume to be bounded by a Jordan curve $\pd\sD$.
Most domains of this paper are the interiors of polygons. 
Let $\sD$ be a domain, and let $A$, $B$, $C$, $D$ 
be distinct points on its boundary in anticlockwise order. 
Let $\om \in\Om$.
We say that $\sD$ has an open crossing from $DA$ to $BC$ if $G$ contains
an open path using only edges  intersecting $\sD$ which, 
when viewed as an arc in $\RR^2$,  intersects $\pd\sD$ 
exactly twice, once between 
$D$ and $A$  
(including the endpoints) and once between $B$ and $C$. 

A \emph{rectangular domain} is a set $\B = f((0, x) \times (0, y)) \subseteq \RR^2$, 
where $x,y >0$ and $f: \RR^2 \to \RR^2$ comprises a rotation and a translation.
The \emph{aspect-ratio} of this rectangle is $\max\{x/y,y/x\}$.
We say  $\B$ \emph{has open crossings} in a configuration $\om\in\Om$
if it has open crossings both from $f(\{0\} \times [0, y])$ to $f(\{x\} \times [0, y])$
and  from $f([0,x] \times \{0\})$ to $f([0,x] \times \{y\})$.

\begin{definition}
A probability measure $\PP$ on $\Om$ is said to have the \emph{\bxp} 
if, for  $\rho > 0$, there exist $l_0 = l_0(\rho) > 0$ and $\de=\delta(\rho) > 0$ 
such that, for all $l > l_0$ and all rectangular domains $\B$ with side-lengths
$l$ and $\rho l$,
\begin{equation}
  \PP(\B \text{ has open crossings}) \geq \delta. \label{isoradial_bxp} 
\end{equation}
An isoradial graph $G$ is said to possess the \bxp\ 
if $\PP_G$ possesses it. 
\end{definition}

In a standard application of the Harris--FKG inequality
(see \cite[Sect.\ 2.2]{Grimmett_Percolation}), it suffices for the \bxp\ 
to consider boxes with aspect-ratio $2$, and moreover only such boxes
with horizontal/vertical orientation (see, for example,
\cite[Prop.\ 3.2]{G-three} and \cite[Prop.\ 3.1]{GM1}). 
If \eqref{isoradial_bxp} holds for this restricted class of
boxes with $\rho=2$ and $\de=\de(2)$, 
we say that $G$ satisfies \BXP$(l_0,\de)$. All graphs considered here
are isoradial with circumradius $1$, and for such graphs
one may take $l_0=3$. We thus abbreviate \BXP$(3,\de)$ to  \BXP$(\de)$.

It was proved by Russo \cite{Russo} and Seymour--Welsh 
\cite{Seymour-Welsh} that
the isotropic embedding of the square lattice (with $p=\frac12$)
has the \bxp, and more generally
in \cite{GM1} that certain inhomogeneous embeddings of the square, triangular, and hexagonal
lattices have the property.

\section{Main results}\label{sec:main}

Let $\sG$ be the class of isoradial graphs with the \bac\ and the \sgp\ (we recall that
the \sgp\ is formulated in Section \ref{sec:track}).
The main technical result of this paper is the following. 
Criticality and universality will follow.

\begin{thm}\label{main}
  For $G \in \sG$, $\PP_G$  possesses the \bxp. 
\end{thm}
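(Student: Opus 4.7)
The plan is to transport the \bxp\ from a canonical reference model---bond percolation on the homogeneous square lattice $\ZZ^2$ at $p=\frac12$---to arbitrary $G\in\sG$ via iterated \stt s. In our framework this reference model is the isoradial embedding of $\ZZ^2$ with all rhombus angles equal to $\pi/2$, and it satisfies the \bxp\ by the classical Russo--Seymour--Welsh theorem. The task is to show that finite sequences of \stt\ moves can carry a crossing of a large rectangle in this reference model to a crossing of a comparable rectangle in any target graph $G\in\sG$, with a success probability bounded below uniformly in the size, location and orientation of the rectangle.

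I would split the argument into two stages, corresponding to Sections~\ref{sec:proof1} and~\ref{sec:proof2}. In the first stage, I would establish Theorem~\ref{main} in the restricted case that $G$ is an isoradial embedding of the square lattice. Such embeddings are parametrised by two bi-infinite sequences of track angles, with the homogeneous reference model corresponding to a constant choice. The key input, developed in \cite{GM1,GM2}, is that a single \stt\ move on an isoradial graph $G$ both produces another isoradial graph $G'$ and comes with a coupling of $\PP_G$ and $\PP_{G'}$ under which the existence of an open crossing of any rectangle not meeting the affected faces is preserved almost surely, while a crossing traversing the affected region is still preserved with positive probability depending only on the \BAC$(\eps)$ parameter. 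Composing such moves into ``track exchanges'' (each swapping two adjacent parallel tracks) then interpolates between the reference model and any prescribed isoradial square lattice. Chaining track exchanges through all tracks intersecting a fixed rectangle, and combining the corresponding positive probabilities via the Harris--FKG inequality, yields a uniform lower bound for crossings of rectangles in the target square lattice.

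In the second stage, I lift the conclusion to a general $G\in\sG$. The \sgp\ supplies two transverse families of tracks in $G$ whose intersections form, combinatorially, a square grid embedded within the track system of $G$. The strategy is to apply \stt\ moves in $G$ that ``push away'' tracks not belonging to this sub-grid from a given rectangle, reducing its interior to a portion of an isoradial square lattice to which the first stage applies. Reversing the \stt\ sequence then transports the resulting crossing back into $G$, at the cost of another controlled factor in the probability. Combining both stages yields \BXP$(\de)$ for $G$ with $\de=\de(\eps)$ depending only on the \bac\ parameter.

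The main obstacle will be the quantitative control of \stt-coupling losses over the long sequences of moves required to treat a rectangle of side~$l$. Three points demand attention: (i)~that the number of \stt\ moves used to interpolate between two angle configurations over such a rectangle is polynomial in~$l$, so that combinatorial bookkeeping remains feasible; (ii)~that the probability of simultaneous success across all these moves stays bounded below uniformly in~$l$, which requires a careful arrangement so that the intervening events are monotone and combinable via Harris--FKG rather than yielding a probability that decays geometrically in~$l$; and (iii)~that, after interpolation, the target rectangle has controlled size and aspect-ratio, so that the standard bootstrap from aspect-ratio~$2$ to arbitrary aspect-ratios applies. The \bac\ guarantees non-degenerate rhombi and thus bounded geometric distortion per move, while the \sgp\ supplies the long ``straight'' sub-grids along which the first-stage argument can be run. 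Engineering \stt\ sequences that satisfy~(ii) without cumulative probability decay is, in my view, the technical heart of the proof.
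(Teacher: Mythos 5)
Your two-stage architecture---transport the \bxp\ from the homogeneous square lattice to isoradial square lattices via track-exchange, then lift to general $G\in\sG$ via the square-grid property---is exactly the paper's strategy, and your description of the second stage (using \stt s to ``push away'' tracks outside the sub-grid, then reversing) matches Lemma~\ref{grid_slide} and the argument around \eqref{grid_transport}. However, your account of the first stage contains a genuine misconception that would sink the proof.

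You write that a crossing ``traversing the affected region is still preserved with positive probability,'' and your obstacle~(ii) worries about combining such per-move success probabilities via Harris--FKG to avoid geometric decay in $l$. This mischaracterizes the \stt\ coupling and the actual source of probabilistic loss. By Proposition~\ref{prop:st-coupling}, a \stt\ preserves open connections between boundary vertices \emph{deterministically}, not merely with positive probability; an open crossing is never destroyed by a single move, it is only relocated. The genuine difficulty---and the technical heart that your proposal is missing---is controlling the cumulative \emph{geometric drift} of the relocated path across the $O(N^2)$ moves. For horizontal crossings the path can drift upwards; Lemma~\ref{column_growth_control} dominates the height profile of the evolving path by a growth process in which each column gains at most one unit per step and does so only with probability $\eta(\eps)<1$, and Lemma~\ref{growth_process_domination} then gives a bound of the form $1-\rho e^{-N}$. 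For vertical crossings (Proposition~\ref{vertical_transport}) the upper endpoint can fall by at most one per step and fails to fall with probability $\ge 2\delta(\eps)$, yielding a random-walk comparison. Harris--FKG plays no role in controlling the chain of \stt\ moves; it is used only to assemble the initial ``seed'' crossing configuration (as in \eqref{G105}) and to combine cell crossings in Proposition~\ref{grid_bxp}. Without the drift-control argument you have no mechanism to make the estimate uniform in $l$, and the Harris--FKG route you sketch does not substitute for it.
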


A more precise statement holds. In discussing the \sgp\ in Section \ref{sec:track},
we will introduce a more specific property denoted \SGP$(I)$ for $I\in\NN$.
We shall show that, for $\eps>0$ and $I\in \NN$, there exists $\de=\de(\eps,I)>0$ such that:
\begin{equation}\label{G2231}
\text{if $G$ satisfies \BAC$(\eps)$ and \SGP$(I)$, $\PP_G$ satisfies \BXP$(\de)$.}
\end{equation}

Let $G=(V,E)$ be a graph, and let $\PP$ be a product 
measure on $\{0,1\}^E$ with intensities $(p_e: e\in E)$.
For $\de \in \RR$, we 
write $\PP^\de$ for the percolation measure with intensities
$p_e^\de := (0\vee (p_e+\delta))\wedge 1$. 
[As usual, $x\vee y = \max\{x,y\}$ and $x \wedge y = \min\{x,y\}$.]
If $G$ is embedded in $\RR^2$, the \emph{radius} $\rad(C_v)$
of the open cluster at $v \in V$
is the supremum of $k \ge 0$ such that $C_v$ contains a vertex
outside the box $v + (-k,k)^2 \subseteq \RR^2$.  

The proofs of the following Theorems \ref{criticality} and \ref{main3}
rely heavily on the \bxp\ of Theorem \ref{main}. 

\begin{thm}[Criticality]\label{criticality}
  Let $G=(V,E) \in \sG$, and let $\nu >0$. 
  \begin{letlist}
  \item
    There exist $a,b,c,d > 0$ such that, for   $v\in V$,
    \begin{align*}
      ak^{-b} \le  \PP_G \bigl(\rad (C_v) \geq k\bigr) \leq ck^{-d}, 
      \qquad k \ge 1. 
    \end{align*}
  \item
    There exists, $\PP_G$-a.s.,  no infinite open cluster. 
  \item
    There exist $f,g>0$ such that, for   $v\in V$,
    $$
    \PP_{G}^{-\nu}(|C_v|\ge k) \le f e^{-gk}, \qquad k \ge 0.
    $$
  \item 
    There exists $h > 0$ such that, for  $v\in V$, 
    $$
    \PP_G^\nu(v \lra\oo)>h.
    $$
  \item 
    There exists, $\PP_G^\nu$-a.s.,  exactly one infinite open cluster. 
  \end{letlist}
\end{thm}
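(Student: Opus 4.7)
The plan is to derive Theorem \ref{criticality} from the \bxp\ furnished by Theorem \ref{main} together with the planar duality $p_e+p_{e^*}=1$ that follows from \eqref{G101}. A preliminary observation is that, by \eqref{G455} and the fact that the track system of $G^*$ coincides with that of $G$, the dual $G^*$ also lies in $\sG$; consequently $\PP_{G^*}$ satisfies the \bxp\ as well. With the \bxp\ available for both primal and dual measures, the argument follows the template of \cite[Props~4.1--4.2]{GM1}, which relies only on the \bxp, the FKG inequality, and planar duality, and therefore transfers to the present setting without invoking any lattice symmetry.

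For (a), the upper bound proceeds by the standard nested-annulus argument: by FKG and the \bxp\ for $\PP_{G^*}$, there exists $\de'>0$ such that, for every $n$, the annulus $(v+[-2^{n+1},2^{n+1}]^2)\sm(v+[-2^n,2^n]^2)$ contains an open$^*$ circuit around $v$ with probability at least $\de'$; since disjoint annuli carry independent events, iteration gives $\PP_G(\rad(C_v)\ge 2^n)\le (1-\de')^{n-1}$ and hence $\PP_G(\rad(C_v)\ge k)\le ck^{-d}$. The lower bound $ak^{-b}$ follows from a single application of the \bxp\ for $\PP_G$ to the box $v+[-k,k]^2$, glued to a local connection from $v$ of constant probability. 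Part (b) is then immediate, since $\PP_G(v\lra\oo)=\lim_{k\to\oo}\PP_G(\rad(C_v)\ge k)=0$.

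For (c), the identity $p_e+p_{e^*}=1$ ensures that the shift $p_e\mapsto p_e-\nu$ on $G$ is exactly dual to $p_{e^*}\mapsto p_{e^*}+\nu$ on $G^*$. To promote the strict negative perturbation into exponential decay of cluster sizes under $\PP_G^{-\nu}$, the plan is a finite-size / sharp-threshold argument of Menshikov--Aizenman--Barsky--Newman or OSSS type: the \bxp\ of $\PP_G$ together with the uniform non-degeneracy provided by \BAC$(\eps)$ yields exponential decay of box-crossing probabilities under $\PP_G^{-\nu}$, which a standard block argument converts into $\PP_G^{-\nu}(|C_v|\ge k)\le fe^{-gk}$. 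Part (d) is then the dual of (c): exact duality identifies the dual measure of $\PP_G^\nu$ with $\PP_{G^*}^{-\nu}$, which is subcritical with exponential decay by (c) applied on $G^*$; hence dual open$^*$ circuits surrounding $v$ at distance $k$ are exponentially rare, and a Peierls-type sum combined with the \bxp\ for $\PP_G$ delivers a uniform lower bound $h>0$ on $\PP_G^\nu(v\lra\oo)$.

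Finally (e) follows from (d) and the absence of infinite dual open$^*$ clusters under $\PP_{G^*}^{-\nu}$: two disjoint infinite primal clusters would be separated by an infinite dual open$^*$ path, which is almost surely absent when the dual is subcritical. The principal obstacle is the exponential-decay step in (c), since the classical differential-inequality machinery (Menshikov, Aizenman--Barsky, Duminil-Copin--Tassion) is traditionally formulated for translation-invariant measures, whereas graphs in $\sG$ need only satisfy the local conditions \BAC$(\eps)$ and \SGP$(I)$. The key point to verify is that the uniformity of the \bxp\ in position and orientation provides enough regularity to stand in for translation invariance in the closure of the relevant inequalities; once this is in place, parts (d) and (e) are formal consequences of exact duality and planarity.
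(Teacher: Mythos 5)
Your proposal matches the structure of the paper's (very brief) proof: the paper simply observes, via \eqref{G455}, \eqref{G456} and Theorem \ref{main}, that both $\PP_G$ and $\PP_{G^*}$ have the \bxp, and then refers to \cite[Props~4.1--4.2, Remark~4.3]{GM1} for the rest. Your outline of (a), (b), (d), (e) — nested dual circuits for the upper bound of (a), a single box-crossing for the lower bound, the duality $p_e+p_{e^*}=1$ giving the dual of $\PP_G^\nu$ as $\PP_{G^*}^{-\nu}$, and planar separation for uniqueness — is the standard argument that GM1 carries out. The one point worth flagging: for step (c) you invoke Menshikov--Aizenman--Barsky or OSSS-type machinery, whereas what \cite{GM1} actually uses is Kesten's sprinkling/pivotality argument (Russo's formula together with the \bxp\ giving a super-polynomial number of pivotal edges, hence rapid decay of crossing probabilities under $\PP_G^{-\nu}$, then a block argument). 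Your description of the \emph{output} of this step is right, but the mechanism is Kesten's rather than a differential-inequality or decision-tree argument; this matters because the Kesten approach works directly for non-translation-invariant planar processes with primal and dual \bxp, which is exactly the point of \cite[Remark~4.3]{GM1}. Your closing concern about the lack of translation invariance is well placed, and is precisely what that Remark addresses: the argument requires only bounded edge lengths, bounded vertex density, \bxp\ for both $\PP_G$ and $\PP_{G^*}$, and edge parameters bounded away from $0$ and $1$, all of which follow from \BAC$(\eps)$ and \SGP$(I)$.
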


More precisely, if $G \in \sG$ satisfies \BXP$(\eps)$ and \SGP$(I)$,
the claims of the theorem hold with constants that depend only on $\eps$, $I$,
and not further on $G$. 
The proof of Theorem \ref{criticality} is summarised at the end of this section.

Turning to critical exponents and universality,
we write $f(t) \eqv g(t)$ as $t\to t_0 \in [0,\oo]$ 
if there exist strictly positive constants $A$, $B$ such that
\begin{equation}\label{asympt}
A g(t) \le f(t) \le Bg(t)
\end{equation}
in some neighbourhood of $t_0$ (or for all large $t$ in the case $t_0=\oo$).
For functions $f^u(t)$, $g^u(t)$ indexed by $u\in U$,
we say that $f^u \eqv g^u$ \emph{uniformly in} $u$ if \eqref{asympt}  holds with
constants $A$, $B$ not depending on $u$.
We write $f(t) \logeqv g(t)$ if
$\log f(t) / \log g(t) \to 1$, and $f^u \logeqv g^u$ \emph{uniformly in} $u$ if
the convergence is uniform in $u$. 

The critical exponents of interest here are those denoted 
conventionally as $\rho$, $\eta$, $\delta$,
and the alternating arm-exponents $\rho_{2j}$. 
We begin by defining the so-called \emph{arm-events}.
Let $B_n$ denote the box $[-n,n]^2$ of $\RR^2$, with
boundary $\pd B_n$.
For $N<n$, let $\Ann(N,n)$ be the \emph{annulus}
 $[-n,n]^2 \setminus (-N,N)^2$
with inner radius $N$ and outer radius $n$.
The \emph{inner} (\resp, \emph{outer}) \emph{boundary} of the annulus is $\pd B_N$ (\resp, $\pd B_n$).
For $u\in \RR^2$, write $\Ann^u(N,n)$ for the translate $\Ann(N,n)+u$.
A \emph{primal} (\resp, \emph{dual}) \emph{crossing} of $\Ann(N,n)$ 
is an open (\resp, open$^*$) path 
whose intersection with $\Ann(N,n)$ is an arc with 
an endpoint in each boundary of the annulus.
Primal crossings are said to have colour $1$, and dual 
crossings colour $0$.

Let $k \in\NN$.
A sequence $\sigma \in \{0,1\}^k$ is called a \emph{colour sequence}
of length $k$. For such $\sigma$, the arm-event
$\Arm_\sigma(N,n)$ is the event that there exist
$k$ vertex-disjoint crossings 
$\gamma_1, \dots, \g_i,\dots,\gamma_k$ of $\Ann(N,n)$ 
with colours $\sigma_i$ taken in anticlockwise order.
The corresponding event on the translated
annulus $\Ann^u(N,n)$ is denoted $\Arm_\sigma^u(N,n)$ and is said to be 
`centred at $u$'. As in \cite{GM2}, the value of $N$ is
largely immaterial to what follows, but  $N=N(\sigma)$
is taken sufficiently large
that the events $\Arm_\sigma(N,n)$ are non-empty for $n \ge N$.

A colour sequence $\sigma$ is called \emph{monochromatic} if either 
$\sigma = (1,1,\dots,1)$ or $\sigma = (0,0,\dots,0)$,
and \emph{bichromatic} otherwise.
It is called \emph{alternating} if it has even length 
and either $\sigma =(1,0,1,0, \dots)$ or  $\sigma =(0,1,0,1, \dots)$.
When $\sigma=(1)$, $\Arm_\sigma(N,n)$ is called the \emph{one-arm-event} 
and denoted $\Arm_1(N,n)$. When $\sigma$ is alternating with
length $k=2j$, the corresponding event is denoted $\Arm_{2j}(N,n)$. 

Let $G \in\sG$ be an isoradial graph with vertex-set $V$, and let $C_v$ be the
open cluster  of the vertex $v \in V$, under the canonical measure $\PP_G$.
We concentrate here on the following exponents given in terms of $\PP_G$, with limits
that are uniform in the choice of $v$:
\begin{letlist}
\item
volume exponent: $\PP_G(|C_v| = n) \logeqv n^{-1-1/\delta}$ as $n \to\oo$,
\item
connectivity exponent: $\PP_G(v \lra w) \logeqv |w-v|^{-\eta}$ as $|w-v|\to\oo$,
\item
one-arm exponent: $\PP_G[\Arm_{1}^v(N,n)] \logeqv n^{-\rho_{1}}$ as $n \to\oo$,
\item
$2j$-alternating-arms exponents: $\PP_G[\Arm_{\sigma}^v(N,n)] \logeqv n^{-\rho_{2j}}$
as $n\to\oo$,
for each alternating colour sequence $\sigma$ of length $2j$, with
$j \ge 1$.
\end{letlist}
It is believed that the above uniformly asymptotic relations hold for 
suitable exponent-values, and indeed with $\logeqv$ 
replaced by the stronger relation $\eqv$. 

The conventional one-arm exponent $\rho$ is given by $\rho=1/\rho_1$,
as in \cite[Sect.\ 9.1]{Grimmett_Percolation}. Parts (c) and (d) above
are parts of the following  more extensive conjecture.

\begin{conj}\label{def_arm_exp}
Let $G$ be an isoradial graph with the \bac, and
let $k \in \NN$ and $\sigma\in\{0,1\}^k$.
\begin{letlist}
\item 
There exists $\rho(\sigma,G) >0$ such that
\begin{align*}
  \PP_G[\Arm_{\sigma}^u(N,n)]  \logeqv n^{-\rho(\sigma,G)} 
  \qquad \text{ as } n \to \infty, 
\end{align*}
uniformly in $u\in\RR^2$.
\item The exponent  $\rho(\sigma,G)$ does not depend on the choice of $G\in\sG$.
\end{letlist}
\end{conj}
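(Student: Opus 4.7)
The plan is to handle the two parts very differently. Part (a), the existence of the arm-exponents, is essentially open beyond site percolation on the triangular lattice (where the SLE$_6$ limit is known); the techniques of this paper, based on the \stt\ and the \bxp, do not seem to give existence, only two-sided power-law bounds. I will therefore concentrate on part (b), the universality across $\sG$, which I establish \emph{conditionally} on the existence of the exponent for some reference graph $G_0\in\sG$ -- the homogeneous square lattice at $p=\tfrac12$ is the natural choice.

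\emph{Reduction.} Assume Conjecture \ref{def_arm_exp}(a) for $G_0$ with exponent $\rho(\sigma,G_0)$. For $G\in\sG$, $u\in\RR^2$, and admissible colour sequence $\sigma$, the target is to produce constants $A=A(\sigma,G),B=B(\sigma,G)>0$, independent of $n$ and $u$, with
\begin{equation*}
A\,\PP_{G_0}[\Arm_\sigma(N,n)]\le \PP_{G}[\Arm_\sigma^u(N,n)]\le B\,\PP_{G_0}[\Arm_\sigma(N,n)],\qquad n\ge N.
\end{equation*}
Once this holds, taking logarithms and dividing by $\log n$ yields $\rho(\sigma,G)=\rho(\sigma,G_0)$ together with the uniform $\logeqv$ asymptotic on $G$.

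\emph{Main step.} The two-sided comparison would be obtained by coupling $\PP_G$ with a measure $\PP_{G'}$ via a finite sequence of \stt s applied inside a large box $B_m$ with $m\gg n$, following the scheme underlying Theorem \ref{main} and \cite{GM1,GM2}. By the \sgp, the track system of $G$ contains a square grid, so finitely many track exchanges can morph the local combinatorial structure around $\Ann^u(N,n)$ into that of $G_0$. Each \stt\ admits the connectivity-preserving coupling of \cite{GM1,GM2}, under which open primal connections and open$^*$ dual connections are transported edge-by-edge; hence the occurrence of $\Arm_\sigma^u(N,n)$ on one side can be converted into the same event on the other, after short completions within a buffer annulus where the two graphs disagree. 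These completions are furnished by the \bxp\ of Theorem \ref{main} together with Harris--FKG, which gives the lower bound up to a multiplicative constant. Reversing the \stt\ gives the matching upper bound, and quasi-multiplicativity of arm probabilities -- a standard consequence of the \bxp\ via the gluing lemmata of \cite[Chap.~11]{Kesten_book} -- ensures that the comparison constants do not depend on $n$ or $u$.

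\emph{Main obstacle.} The genuine difficulty is part (a): these tools control arm probabilities only up to constants and cannot by themselves produce the limit $\rho(\sigma,G)=-\lim_n \log\PP_G[\Arm_\sigma^u(N,n)]/\log n$. A secondary difficulty, even for part (b), is the handling of the ordered colour sequence $\sigma$ through the \stt: individual primal and dual arms are preserved by star--triangle moves, but their cyclic order around $\Ann^u(N,n)$ must also be preserved for the event to transfer. This is presumably why the universality result of this paper is restricted to the one-arm and $2j$-alternating colour sequences, which admit a robust separator-based description via matched primal/dual crossings and so are stable under the \stt\ manipulations that already prove Theorem \ref{main}.
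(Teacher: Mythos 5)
This statement is labelled a conjecture (Conjecture \ref{def_arm_exp}) and is \emph{not} proved in the paper; the closest result is Theorem \ref{main3}, which verifies part (b) only for one-arm and $2j$-alternating colour sequences, and only conditionally on the existence of the exponent for some $G_0\in\sG$. Your proposal correctly recognizes that part (a) lies beyond the reach of the \stt/\bxp\ machinery, and your sketch of part (b) --- transporting $\Arm_\sigma^u(N,n)$ through a finite sequence of \stt\ couplings and completing the arms across the buffer region via the \bxp, Harris--FKG, and a separation theorem --- is exactly the approach of Proposition \ref{exp_transport} together with Sections \ref{sec:proof_arm1}--\ref{sec:sep_thm}. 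You also put your finger on the genuine obstacle that explains the restriction to $\sigma\in\{(1)\}\cup\{\text{alternating}\}$: the paper's modified arm-events $\wt{\Arm}_k$ are defined via arm endpoints on a single track and a matched primal/dual structure, and it is precisely this structure, not an arbitrary colour sequence, that is robustly preserved under track-exchange. So your assessment matches the paper's; the only thing worth flagging is that, as written, the `Reduction' and `Main step' paragraphs momentarily read as if the comparison were being established for general $\sigma$, whereas the `Main obstacle' paragraph correctly retracts this --- a reader would be helped if you stated up front that the argument only closes for the one-arm and alternating cases.
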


Essentially the only two-dimensional
percolation process for which the arm-exponents $\rho_\sigma$ are proved to exist 
(and, furthermore, many of their values known explicitly) 
is site percolation on the triangular lattice (see \cite{BN,Smirnov,Smirnov-Werner}).
In this special case (not belonging to the class of models
considered in this paper), the  
$\rho_\sigma$ are constant for all bichromatic colour sequences of given length
(see \cite{ADA}), and the monochromatic arm-exponents have been studied in \cite{BN}.

A critical exponent $\pi$ is said to \emph{exist} for a graph $G \in  \sG$ 
if the appropriate asymptotic relation holds. 
It is called $\sG$-\emph{invariant} if it exists for all $G \in \sG$ and
its value is independent of the choice of $G$.

Our universality theorem is presented next. 
Part (a) amounts to a verification of Conjecture
\ref{def_arm_exp}(b)
for isoradial graphs $G\in\sG$, and for colour
sequences which are either of length one or alternating. 

\begin{thm}[Universality]
\label{main3}
\mbox{}
\begin{letlist}
\item
Let $\pi \in\{\rho\}\cup\{\rho_{2j}: j \ge 1\}$.
If $\pi$ exists for some $G \in  \sG$, then it is $\sG$-invariant.
\item
If either $\rho$ or $\eta$ exists for some $G \in \sG$, then $\rho$, $\eta$, $\de$ are
$\sG$-invariant and satisfy the scaling relations $\eta\rho=2$ and $2\rho = \de+1$.
\end{letlist}
\end{thm}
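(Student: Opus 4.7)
The plan is to reduce the universality of the critical exponents to a single uniform comparison of arm-event probabilities across the class $\sG$, and then to invoke classical scaling relations. For part~(a), I would aim to prove that, for any two graphs $G_1, G_2 \in \sG$, any admissible colour sequence $\sigma$ (either $\sigma=(1)$ or $\sigma$ alternating of even length $2j$), and any $u_1, u_2 \in \RR^2$, there exist constants $c_1, c_2 > 0$ depending only on $\eps$, $I$, and $\sigma$ such that
\begin{equation*}
c_1 \PP_{G_1}[\Arm_\sigma^{u_1}(N,n)] \le \PP_{G_2}[\Arm_\sigma^{u_2}(N,n)] \le c_2 \PP_{G_1}[\Arm_\sigma^{u_1}(N,n)]
\end{equation*}
for all sufficiently large $n$. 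Taking logarithms, dividing by $\log n$, and letting $n \to \oo$ kills the multiplicative constants, so that whenever a power-law asymptotic exists on one side it must hold with the same exponent on the other. This yields the $\sG$-invariance asserted in (a) at once.

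The comparison rests on two ingredients. The first is the standard toolkit of separation-of-arms and quasi-multiplicativity for planar percolation models satisfying the \bxp\ (cf.\ \cite{Kesten_book,GM2}); Theorem~\ref{main} makes this toolkit available uniformly over $\sG$. The second, and more substantive, is a transport of arm events between graphs via the \stt: as in \cite{GM1,GM2}, two isoradial graphs related by such a move admit a coupling of their canonical measures under which open primal \emph{and} dual connections between any prescribed pair of vertices are preserved, so that arm events indexed by $\sigma=(1)$ or by alternating $\sigma$ are transported faithfully. These are precisely the colour classes whose combinatorial structure is respected by the primal/dual exchange that the coupling may induce, which explains their privileged role in part~(a). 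Given $G_1, G_2 \in \sG$, I would interpolate between them by a long but controlled sequence of isoradial graphs linked by local \stt s, using the \sgp\ of each intermediate graph as the global scaffold along which the interpolation proceeds, and relying on the \bac\ to keep all canonical edge-probabilities uniformly bounded away from $0$ and $1$.

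For part~(b), once $\rho = 1/\rho_1$ is known to be $\sG$-invariant by part~(a), the classical scaling relations of Kesten \cite{Kesten_book}, whose derivation requires only the \bxp\ together with quasi-multiplicativity (both available uniformly over $\sG$), yield $\eta\rho = 2$ and $2\rho = \delta+1$. These identities show that existence on some $G \in \sG$ of any one of $\rho,\eta,\delta$ forces the simultaneous existence and common value of the other two on the same graph, and thus propagates $\sG$-invariance from $\rho$ to $\eta$ and $\delta$.

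The hard part will be the quantitative control of the multiplicative error accumulated along the \stt\ interpolation: each local move can perturb arm-event probabilities by a bounded factor, and one must show that the product of these factors remains uniformly bounded over the entire sequence connecting $G_1$ to $G_2$. The \sgp\ is essential here in providing the global grid that organizes the interpolation; the \bac\ prevents degeneration of local edge-probabilities; and the quantitative \bxp\ of~\eqref{G2231} supplies the uniform RSW-type bounds needed to contain the compounded error. The programme closely parallels, but is more general than, the one carried out for isoradial square lattices in \cite{GM2}.
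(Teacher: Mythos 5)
Your proposal captures the paper's strategy correctly in outline: establish a uniform comparison of arm-event probabilities across $\sG$ (the paper's Proposition~\ref{exp_transport}, which pivots through the fixed reference $G_{0,\pi/2}$ rather than comparing two arbitrary graphs directly, but the two formulations are equivalent), deduce $\sG$-invariance of $\rho$ and $\rho_{2j}$ by taking logarithms, and then appeal to Kesten's scaling relations under the \bxp\ to propagate this to $\eta$ and $\delta$ for part~(b) (the paper's Proposition~\ref{kesten_scaling}, checking conditions on edge lengths, vertex density, primal/dual \bxp\ and quasi-homogeneity of radius tails). The two ingredients you name --- separation/quasi-multiplicativity and \stt-transport of arm events --- are exactly the ones the paper deploys.

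However, you misdiagnose the key technical difficulty. You write that "each local move can perturb arm-event probabilities by a bounded factor, and one must show that the product of these factors remains uniformly bounded over the entire sequence." If this were the situation, the argument would be dead on arrival: the number of track-exchanges connecting a general $G\in\sG$ to the homogeneous reference grows at least linearly in $n$, so per-step multiplicative losses bounded away from $1$ would compound to $c^{\Theta(n)}$. The resolution is that the \stt\ is an \emph{exact} measure-preserving coupling (Proposition~\ref{prop:st-coupling}): it preserves primal and dual connectivities with no probabilistic loss whatsoever. The genuine difficulties are entirely geometric, not cumulative-probabilistic: (i) the standard arm-events $A_k$ are defined via Euclidean annuli, whereas the transported objects live naturally in terms of tracks and the diamond-graph metric $d^\di$, so the paper introduces modified arm-events $\wt A_k$ pinned to a track and proves $\PP_G[\wt A_k]\asymp\PP_G[A_k]$ (Proposition~\ref{exp_equiv}) via the separation theorem and the metric comparison \eqref{di_euclid}; and (ii) one must track the spatial extent of the relevant clusters under the sequence of track-exchanges --- hence condition~\eqref{G601} and the diamond-shaped bounding regions --- to guarantee that paths are neither broken at the strip boundary nor merged across arms. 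The bounded multiplicative constants in the final comparison come from a \emph{fixed} (independent of $n$, $N$, $G$) number of applications of separation-of-arms and box-crossing extensions, not from controlling a product of per-step errors.

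Your claim that $\sigma=(1)$ and alternating sequences are privileged because "the primal/dual exchange that the coupling may induce" respects them is a plausible heuristic, but the paper's actual reason is that these are precisely the colour sequences for which one can formulate the modified arm-events $\wt A_k$ with all arm endpoints pinned to a single track $s$ and transport them coherently under track-exchange; the separation theorem and the gluing constructions of Section~\ref{sec:sep_thm} are tailored to this structure.
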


Kesten showed in \cite{Kesten87} 
(see also \cite{Nolin}) that certain properties of a \emph{critical} percolation process 
imply properties of the \emph{near-critical} process, when the underlying graph has
a sufficiently rich automorphism group. In particular, knowledge of
certain critical exponents \emph{at criticality} implies knowledge of exponents 
\emph{away from criticality}. Only certain special isoradial graphs have sufficient
homogeneity for such arguments to hold without new ideas of substance.
Therefore, further discussion is omitted, and the reader is referred instead to 
\cite[Sect.\ 1.4]{GM2}. 

Finally, we make some comments on the proofs.
There are two principal steps in the proof of Theorem \ref{main}.  
Firstly, using a technique involving star--triangle transformations, 
the \bxp\ is transported from the homogeneous square lattice to 
an arbitrary isoradial embedding of the square lattice (with the \bac). 
Secondly, the \sgp\ is used to transport the \bxp\ 
to general isoradial graphs. 
This method may be used also to show the invariance
of certain arm exponents across the class of such isoradial graphs,
as in Theorem \ref{main3}. The basic approach is that of \cite{GM1,GM2}, but
the geometrical constructions used here
differ in substantial regards from those papers.

\begin{proof}[Proof of Theorem \ref{criticality}]
  By \eqref{G455}, \eqref{G456}, and Theorem \ref{main}, 
  both $\PP_G$ and $\PP_{G^*}$ have the \bxp.
  The claims then follow as in \cite[Props 4.1, 4.2]{GM1} (see also \cite[Remark 4.3]{GM1}), 
  and the details are omitted. It suffices to check that the conditions of the Remark 
  hold under the \bac. The constants in the theorem may be tracked through the proofs,
  and are found to depend only on the values of $\eps$ and $I$.
\end{proof}

\section{Isoradial graphs and rhombic tilings}\label{sec:isoradial}

\subsection{Rhombic tilings}

A \emph{rhombic tiling} is
a planar graph embedded in $\RR^2$ such that every face
is a rhombus of side-length $1$. 
Rhombic tilings have featured prominently in the
theory of planar tilings, both periodic and aperiodic. 
A famous example is the aperiodic rhombic tiling
of Penrose \cite{Pen78}, and the generalizations of
de Bruijn \cite{deB1,deB2} and others. The reader is referred to \cite{GS,Sen}
for general accounts of the theory of tiling.  

There is a two--one correspondence 
between isoradial graphs and
rhombic tilings of the plane, which we review next. 
Let $G=(V,E)$ be an isoradial graph. The \emph{diamond graph} $G^\di$
is defined as follows. The vertex-set of
$G^\di$ is $V^\di :=V \cup C$ where $C$ is the set of circumcentres of faces of $G$;
elements of $V$ shall be  called \emph{primal} vertices, and elements of $C$ \emph{dual} vertices.
Edges are placed between pairs $v\in V$,  $c\in C$
if and only if
$c$ is the centre of a circumcircle of a face containing $v$. 
Thus $G^\di$ is bipartite. Since $G$ is isoradial, the diamond graph
$G^\di$ is a rhombic tiling, and is illustrated in Figure \ref{fig:iso_sq}.

\begin{figure}[htb]
 \centering
    \includegraphics[width=0.6\textwidth]{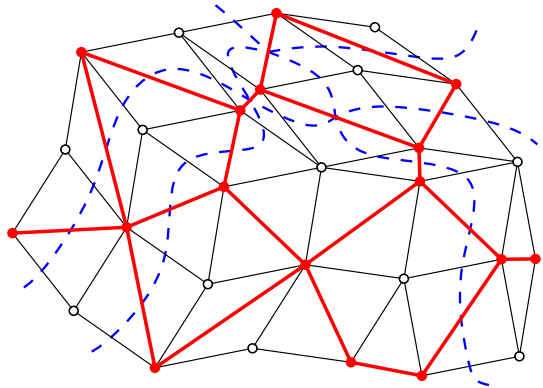}
  \caption{The isoradial graph $G$ is drawn in red,
and the associated diamond graph $G^\di$  in black. 
    The primal vertices of $G^\di$ are those of $G$; 
    the dual vertices are centres of faces of $G$.
    A track is a doubly infinite sequence of adjacent rhombi
    sharing a common vector, and 
    may be represented by a path, drawn in blue.
    Two tracks meet in an edge of $G$ lying in some face of $G^\di$.}
  \label{fig:iso_sq}
\end{figure}

From the diamond graph $G^\di$ may be found both $G$ and its
planar dual $G^*$. Write $V_1$ and $V_2$ for the two sets of vertices in the bipartite $G^\di$.
For $i=1,2$, let $G_i$ be the graph with vertex-set $V_i$,
two points of which are joined  by an edge if and only if they lie in the same
face of $G^\di$. One of the graphs $G_1$, $G_2$ is $G$ and the other
is its dual $G^*$.
It follows in particular that $G^*$ is isoradial. Let $e\in E$
and let $e^*$ denote its dual edge.
The pair $e$, $e^*$ are diagonals of the same rhombus of $G^\di$
and are thus perpendicular.

The above construction may be applied to any rhombic tiling $T$ to obtain
a primal/dual pair of isoradial graphs.

\subsection{Track systems}\label{sec:track}

Rhombic tilings have attracted much interest,
especially since the discovery by
Penrose \cite{Pen74,Pen78} of his 
celebrated aperiodic tiling.
Penrose's rhombic tiling
was elaborated by de Bruijn \cite{deB1,deB2}, who developed the following
representation in terms of `ribbons' or `(train) tracks'.
Let $G=(V,E)$ be isoradial. An edge $e_0$ of $G^\di$ belongs to two rhombi
$r_0$, $r_1$ of $G^\di$. 
Write $e_{-1}$ (\resp, $e_1$) for  the  edge of $r_0$ (\resp, $r_1$) opposite  $e_0$,
so that $e_{-1},e_0,e_1$ are parallel unit-line-segments. 
The edge $e_{-1}$ (\resp, $e_1$) belongs to a further rhombus $r_{-1}$ (\resp, $r_2$)
that is distinct from $r_0$ (\resp, $r_1$).
By iteration of this procedure, we obtain a doubly-infinite sequence of 
rhombi $(r_i :i \in \ZZ)$ such that the intersections $(r_i \cap r_{i+1}: i \in \ZZ)$
are distinct, parallel unit-line-segments. 
We call such a sequence a \emph{(train) track}. We write $\sT(G)$ for the
set of tracks of $G$, and note that $\sT(G) = \sT(G^*)$.
The track construction is illustrated in Figure \ref{fig:iso_sq}.

A track $(r_i: i\in \ZZ)$  is sometimes illustrated as an arc joining the midpoints of
the line-segments $r_i \cap r_{i+1}$ in sequence. The set 
$\sT$ may therefore be represented as a family of doubly-infinite
arcs which, taken together with the intersections 
of arcs, defines a graph. We shall denote this graph by $\sT$ also. 
A vertex $v$ of $G^\di$ is said to be \emph{adjacent} to a track $(r_i: i\in \ZZ)$
if it is a vertex of one of the rhombi $r_i$. 
 
It was pointed out by de Bruijn, and is easily checked, that
the rhombi in  a track are distinct. Furthermore, two distinct tracks
may have no more than one rhombus in common.
Since each rhombus belongs to exactly two tracks, 
it is the unique intersection of these two tracks. 

Kenyon and Schlenker \cite{KenS} have showed a converse theorem. 
Let $Q$ be an infinite planar graph embedded in the plane with the
property that every face has four sides.
One may define the tracks of $Q$ by an adaptation of the above definition: 
a track exits a face
across the edge opposite to its entry.
Then $Q$ may be deformed continuously into a rhombic tiling if and only if
(i) no track intersects itself, and (ii) no two tracks intersect more
than once.

A  track $t$ is said to be \emph{oriented} if it is endowed with
a direction. As an oriented track
$t$ is followed in its given direction, it crosses sides of rhombi which are parallel.
Viewed as vectors from right to left, these sides constitute a unit vector $\tau(t)$ of $\RR^2$
called the \emph{transverse vector} of $t$. The transverse vector makes an angle
with the $x$-axis called the \emph{transverse angle} of $t$, with value
in the interval $[0,2\pi)$.

Let $I \in \NN$. We say that an isoradial graph $G$ has the \emph{\sgp} \SGP$(I)$ if 
its track-set $\sT$ may be partitioned into three sets $\sT =S \cup T_1 \cup T_2$ 
satisfying the following.
\begin{letlist}
\item For $k=1,2$, $T_k$ is a set $(t_k^i: i \in \ZZ)$ of distinct 
  non-intersecting tracks indexed by $\ZZ$.
\item For $k=1,2$ and $s \in \sT\setminus T_k$, every track of $T_k$ intersects $s$, 
and these intersections occur in their lexicographic order. 
\item For $k=1,2$, $i\in \ZZ$, and $s \in  T_{3-k}$,
  the number of track-intersections on $s$ between its intersections
  with $t_k^i$ and $t_k^{i+1}$ is strictly less than $I$. 
\end{letlist}
Two tracks belonging to the same $T_k$ are said to be \emph{\para}.
We refer to $T_1\cup T_2$ as a \emph{square grid} of $G$,
assumed implicitly to satisfy (c) above.
A square grid is a subset of tracks with the topology of the square lattice
(and satisfying (c)). 

Since the \sgp\ pertains to the diamond graph $G^\di$ rather than to $G$ itself,
\begin{equation}\label{G456}
\mbox{$G$ satisfies \SGP$(I)$ if and only if $G^*$ satisfies \SGP$(I)$.}
\end{equation}

An isoradial graph $G$ is said to have the \sgp\  (\SGP) if it satisfies \SGP$(I)$ for some
$I\in\NN$. As before, $\sG$ denotes the set of all isoradial graphs with
the \bac\ and the \sgp.  More specifically, we write $\sG(\eps,I)$
for the set of $G$ satisfying \BAC$(\eps)$ and \SGP$(I)$.

Let $G \in \sG$ have square grid $T_1 \cup T_2$. It may be seen by
the \bac\ that, for $k=1,2$, every 
$x \in \RR^2$ lies either in some track of $T_k$ or in the region of $\RR^2$ `between' two
consecutive elements of $T_k$.

\subsection{Examples}\label{sec:ex}

Here are three families of isoradial graphs with the \sgp, and one without.

\subsubsection{Isoradial square lattices}
An isoradial embedding of the square lattice is called an \emph{isoradial square lattice}
The track-system of such a graph is simply a square grid, and \emph{vice versa}.

\subsubsection{Periodic graphs}

A planar graph $H$, embedded in $\RR^2$, is said to be \emph{periodic} if there exist
distinct non-zero vectors $\tau_1,\tau_2\in\RR^2$ such that 
$H$ is invariant under shifts by either $\tau_i$.
Let $G$ be an isoradial embedding of a periodic 
connected graph $H$
(the embedding itself need not be periodic).
The track system $\sT$ of $G$ (viewed as a set of arcs) is determined by the structure of $H$. 
Since $H$ is periodic, so is $\sT$ (viewed as a graph). 
Therefore, $\sT$ may be embedded homeomorphically
into $\RR^2$ in a periodic manner. After re-scaling,
we may assume that $\sT$ is invariant under any unit shift of $\RR^2$ in the 
direction of a coordinate vector. 
In fact, $\sT$ may be thought of as the lifting to the universal cover of a track-system
on a torus.

As observed in \cite[Sect.\ 5.2]{KenS}, any
oriented track $t$ has an asymptotic 
angle $\a(t) \in S^1$, and in addition the reversed
track has direction $\pi+\a(t)$.
Let $t \in \sT$, viewed as a subset of $\RR^2$. 
There exists $(a,b)\in\ZZ^2$, $(a,b) \ne (0,0)$, such that $t$ is invariant under the shift $\tau_{a,b}:
z\mapsto z+(a,b)$.  We have that $\tan \a(t) =b/a$. By periodicity,
the set of all angles (modulo $\pi$) of $\sT$ is finite, 
and we write it as $\{\a_1,\a_2,\dots,\a_m\}$
with $m \ge 1$.

Let $T_k$ be the set of tracks with asymptotic angle
(modulo $\pi$)  $\a_k$. By periodicity,
each $T_k$ is a set of tracks indexed by $\ZZ$, and may be ordered according to
their crossings of the line with polar coordinates $\th = \th_0$ with $\th_0 \ne \a_k$ for all $k$.
Since tracks
$t_k\in T_k$, $t_l \in T_l$ (with $k \ne l$) have different asymptotic angles, they must intersect.

It remains to show that any $t,t' \in T_k$ do not intersect 
(whence, in particular, $m \ge 2$). 
Suppose the converse,
that there exist $k\in\{1,2,\dots,m\}$ and $t,t'\in T_k$
such that $t$ and $t'$ intersect at some point $J\in\RR^2$. 
Since $t$ and $t'$ have the same 
angle $\a_k$, there exists $(a,b)\in \ZZ^2$ such that
$t$ and $t'$ are invariant under $\tau_{a,b}$. 
Therefore, they intersect at $J+n(a,b)$
for all $n \in \ZZ$, in contradiction of the fact that 
they may have at most one intersection.

For any distinct pair $T_k$, $T_l$,
part (c) of the \sgp\ holds by periodicity.

We have proved not only that $G$ has the \sgp, but the stronger fact that
its track-set may be partitioned into $m$ classes of parallel tracks.

\subsubsection{Rhombic tilings via multigrids}

The following `multigrid' construction was introduced and studied 
by de Bruijn \cite{deB1,deB2,deB3}.
A \emph{grid} is a set of parallel lines in $\RR^2$ with some common perpendicular 
unit-vector $v$.
A \emph{multigrid} is a family of grids with pairwise non-parallel perpendiculars. Suppose there
are $m \ge 2$ grids, with perpendiculars $v_1, v_2, \dots, v_m$. The  $k$th grid is given
in terms of a set $C_k=\{c_k^i: i \in \ZZ\}$ of reals, 
specifically as the set of all $z \in \RR^2$
with $z \cdot v_k = c^i_k$ as $i$ ranges over $\ZZ$.  It is assumed that the $c_k^i$ are
strictly increasing in $i$, with $c_k^i/i \to 1$ as $i \to \pm\oo$.

With the lines of the $k$th grid, duly oriented,
we associate a unit vector $w_k$. It is explained in \cite{deB3}
how, under certain conditions on the $C_k$, $v_k$, $w_k$, one may `dualize'
the multigrid to obtain a  rhombic tiling of $\RR^2$. The track-set of the ensuing tiling
is a homeomorphism of the multigrid with transverse vectors $w_k$. 
Under the additional assumption that the
differences
$|c_k^{i+1} - c_k^i|$ are uniformly bounded away from $0$ and $\oo$, all such tilings have both 
the \bac\ and the \sgp. The results of this paper apply to the associated isoradial graphs.

Penrose's rhombic tiling may be obtained thus with $m=5$, 
the $v_k$ being vectors forming a regular pentagon, with $w_k = v_k$,
and $C_k = \{i+\g_k: i \in \ZZ\}$ with an appropriate vector $(\g_k)$. 
Other choices of the parameters yield a broader class
of aperiodic rhombic tilings of the plane. See \cite{deB1,deB2}.
Percolation on Penrose tilings has been considered in \cite{Hof}.

\subsubsection{A track-system with no square grid}

Figure \ref{fig:nosg} is an illustration of a track-system without
the \sgp. 

\begin{figure}[htb]
 \centering
    \includegraphics[width=0.4\textwidth]{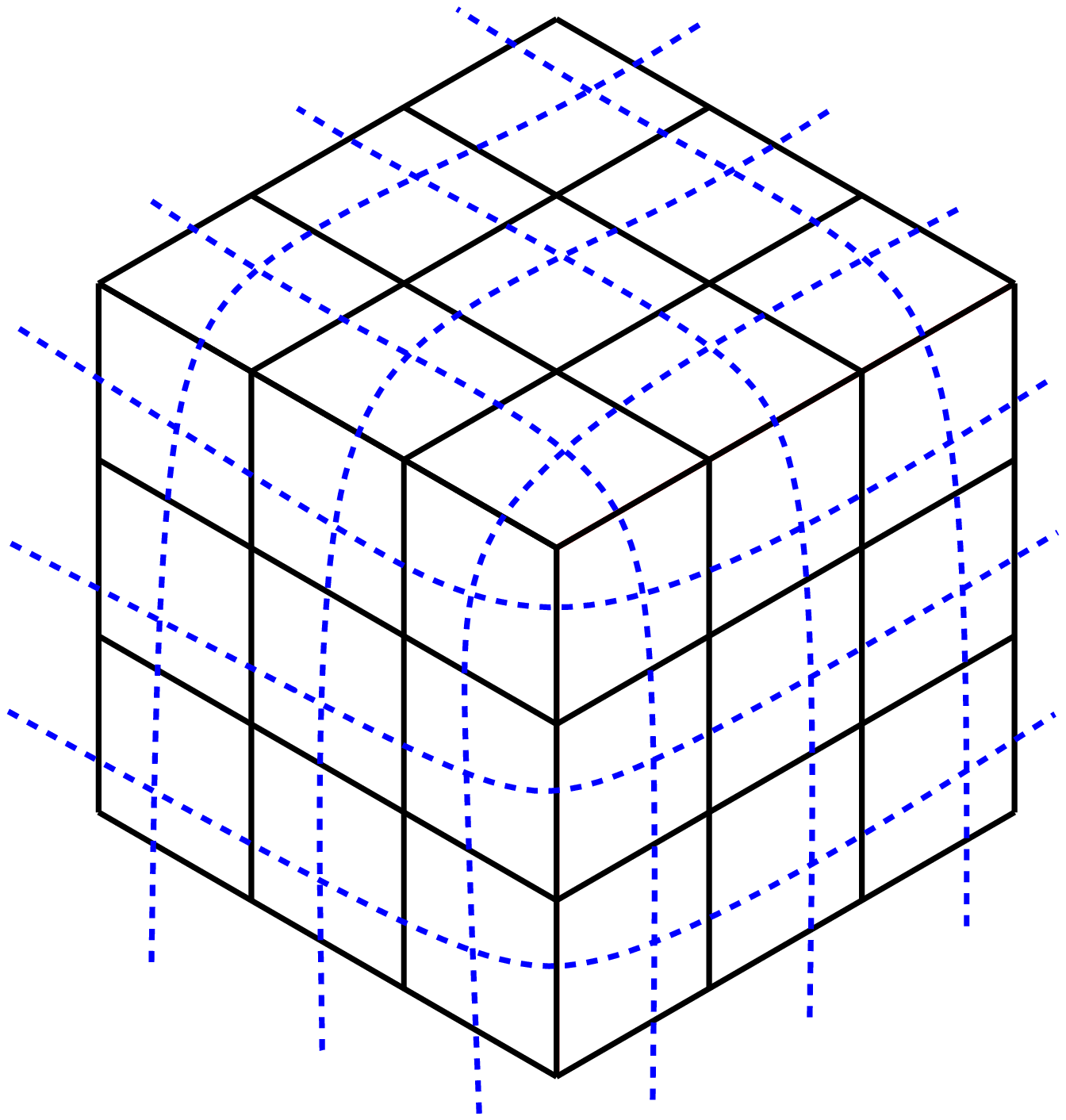}
\raisebox{12pt}{\includegraphics[width=0.4\textwidth]{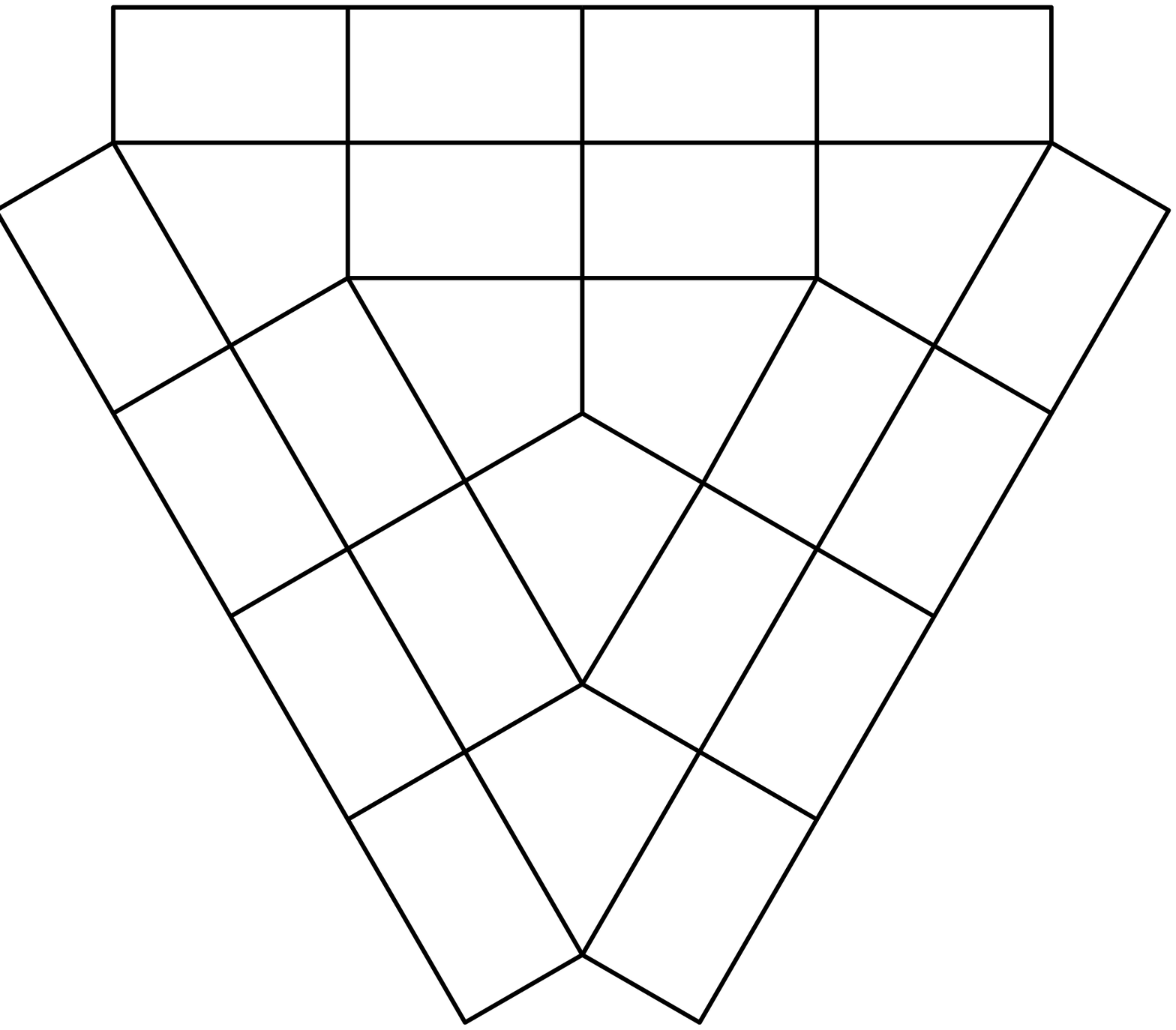}}
  \caption{Part of a rhombic tiling without
the \sgp, and one of the two corresponding isoradial graphs.}
  \label{fig:nosg}
\end{figure}

\subsection{Equivalence of metrics}\label{sec:metrics}
Let $G$ be an isoradial graph.
It will be convenient to use both the Euclidean metric $|\cdot|$ 
and the graph-metric  $d^\di$ on $G^\di$.
For $n \in N$ and $u \in G^\di$ we write $\La_u^\di(n)$ 
for the ball of $d^\di$-radius $n$ centred at $u$:
\begin{align*}
  \La_u^\di(n) =  \{ v \in G^\di : d^\di (u,v) \leq n\}.
\end{align*}

\begin{prop}\label{distance_equiv}
Let $\eps>0$. There exists $\cd=\cd(\eps)>0$ such that, for any
isoradial graph $G=(V,E)$ satisfying \BAC$(\eps)$,
  \begin{align}\label{di_euclid}
    \cd^{-1} |v-v'| \leq  d^\di (v,v') \leq \cd |v-v'| , \qquad  v,v' \in G^\di. 
  \end{align}
\end{prop}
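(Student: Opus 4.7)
I would prove the two inequalities separately. The lower bound $|v-v'| \le d^\di(v,v')$ is immediate: each edge of $G^\di$ is a side of a rhombus of circumradius $1$, hence has Euclidean length $1$, so the displacement along any combinatorial path of $k$ edges is at most $k$. Thus any $\cd \ge 1$ suffices for this direction, and all the work lies in the matching upper bound $d^\di(v,v') \le \cd |v-v'|$, which I would derive from an area count driven by the rhombus area bound \eqref{rhombic-area}.

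For the upper bound I would track the rhombi of $G^\di$ crossed by the Euclidean segment $\ell=[v,v']$. After a small generic perturbation avoiding interior vertices of $G^\di$, list these rhombi in order of visit as $r_1,\dots,r_N$; then consecutive rhombi share an edge of $G^\di$, $v$ is a vertex of $r_1$, and $v'$ is a vertex of $r_N$. Under \BAC$(\eps)$, every rhombus has longer diagonal $2\cos(\eps/2) \le 2$, so $\bigcup_i r_i$ lies in the tubular neighbourhood of $\ell$ of radius $2$, whose area is at most $4|v-v'|+4\pi$. Combined with the lower bound $\sin\eps$ on each rhombus area from \eqref{rhombic-area}, this yields
$$
N \le \frac{4|v-v'|+4\pi}{\sin\eps}.
$$
A path in $G^\di$ from $v$ to $v'$ is then built by traversing each $r_i$ along its boundary from the edge shared with $r_{i-1}$ (or from $v$ itself, when $i=1$) to the edge shared with $r_{i+1}$ (or to $v'$, when $i=N$). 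Since $r_i$ is a quadrilateral, each such traversal uses at most two edges, giving $d^\di(v,v') \le 2N$.

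To convert this into a purely multiplicative bound, I would use the auxiliary fact that any two distinct vertices of $G^\di$ are at Euclidean distance at least $\sin\eps$: the distance from a vertex $v_0$ to any side of an incident rhombus that does not meet at $v_0$ equals $\sin\theta \ge \sin\eps$, so the open Euclidean ball of radius $\sin\eps$ centred at $v_0$ lies in the star of rhombi at $v_0$ and contains no other vertex. Since we may assume $v\ne v'$, the additive term $8\pi/\sin\eps$ above is then dominated by a multiple of $|v-v'|$, and one obtains a constant $\cd=\cd(\eps)$ of order $1/\sin^2\eps$, as required. The only delicate point is the degenerate situation in which $\ell$ passes through a vertex of $G^\di$: there the list $r_1,\dots,r_N$ is ambiguous and the per-rhombus edge count requires a short case analysis at the crossing, but a small perturbation of $v$ or $v'$ (or of $\ell$ near the exceptional vertex) sidesteps the issue while changing $N$ only by $O(1)$, which is absorbed in the additive constant above.
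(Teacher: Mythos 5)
Your proposal is correct and takes essentially the same route as the paper: bound the number of faces of $G^\di$ meeting the segment $[v,v']$ by comparing the area of a tubular neighbourhood of the segment with the lower bound $\sin\eps$ on rhombus areas from \eqref{rhombic-area}, extract a combinatorial path through the chain of faces, and absorb the additive constant using a uniform lower bound on the distance between distinct vertices of $G^\di$. The only difference is that you make explicit two points the paper leaves implicit — the perturbation to avoid the segment passing through a vertex, and the identification of the minimum-vertex-separation constant as $\sin\eps$ — which is fine but not a change in method.
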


\begin{proof}
  Let $u$, $v$ be distinct vertices of $G^\di$. 
  Since each edge of $G^\di$ has length $1$, 
  $ d^\di(u,v) \geq |u-v|$.
  Conversely, let $S_{uv}$ be the set of all faces of $G^\di$ (viewed as closed sets of $\RR^2$) 
  that intersect the straight-line segment $uv$ of $\RR^2$ joining $u$ to $v$. 
  Since the diameter of any such face is less than $2$,
  every point of the union of $S_{uv}$ is within Euclidean distance $2$ of $uv$.
  By \BAC$(\eps)$ and \eqref{rhombic-area}, 
  every face has area at least $\sin \eps$, 
  and  therefore
  $|S_{uv}| \le 4(|u-v| + 4)/ \sin \eps$.
  Similarly, there exists
  $\de=\de(\eps)>0$ such that $|u-v| \ge \de$.
  The edge-set of elements of $S_{uv}$ contains a path of 
  edges of $G^\di$ from $u$ to $v$, whence
  $$ 
  d^\di(u,v) \le \frac{8}{\sin \eps} (|u-v|+4) \le \frac{8(\de+4)}{\de \sin \eps} |u-v|,
  $$
  as required.
\end{proof}

\subsection{The \bxp\ for graphs in $\sG$}\label{sec:bxp_G}

This section begins with a definition of the rectangular 
domains of an isoradial graph $G \in \sG$,
using the topology of its square grid.

Let  $(t, t')$ be an ordered pair of non-intersecting tracks of  $G$.
A point $x \in \RR^2$ is said to be `strictly between' $t$ and $t'$ if, with these tracks
viewed as arcs of $\RR^2$, there exists an unbounded path of $\RR^2$ from $x$ that intersects
$t$ but not $t'$, and \emph{vice versa}.
A face $F$ of $G^\di$ is said to be \emph{between} $t$ and $t'$ if: either $F$ is
a rhombus of $t$, or every point of $F$ is strictly between $t$ and $t'$.
Note that this usage of `between' is not symmetric: 
there are faces  between $t$ and $t'$ that are not
between $t'$ and $t$.
A vertex or edge of $G^\di$ is said to be `between' $t$ and $t'$ 
if it belongs to some face between $t$ and $t'$. 
The \emph{domain} between $t$ and $t'$ 
is the union of the (closed) faces between $t$ and $t'$.
It is useful to think of a domain as either
a subgraph of $G^\di$, or (unlike the domains
of Section \ref{sec:thebxp}) as a closed region of $\RR^2$. 

Suppose $G \in \sG$ has a square grid $S \cup T$, 
with  $S=(s_j: j\in\ZZ)$ and $T=(t_i : i\in\ZZ)$. 
We call tracks in $S$ (\resp, $T$)  \emph{horizontal} (\resp,  \emph{vertical}). 
For $i_1, i_2, j_1, j_2\in \ZZ$ we define 
$\Dom = \Dom(t_{i_1}, t_{i_2}; s_{j_1}, s_{j_2})$ to be the intersection of 
the domains between $t_{i_1}$ and $t_{i_2}$ and between $s_{j_1}$ and $s_{j_2}$.

We say that $\Dom$ is crossed horizontally 
if $G$ contains an open path $\pi$ such that:
(i) every edge of $\pi$ lies in $\Dom$, and
(ii) the first edge crosses $t_{i_1}$ and the last vertex is adjacent to $t_{i_2}$.
Write  $\Ch(\Dom) = \Ch(t_{i_1}, t_{i_2}; s_{j_1}, s_{j_2})$ 
for the event that $\Dom$ is crossed horizontally, with a similar
definition of the vertical-crossing event $\Cv(\Dom)$. 
See Figure \ref{fig:domain} for an illustration of the above notions. 

\begin{figure}[htb]
  \begin{center}
    \cpsfrag{si}{$s_{j_1}$}
    \cpsfrag{sii}{$s_{j_2}$}
    \cpsfrag{ti}{$t_{i_1}$}
    \cpsfrag{tii}{$t_{i_2}$}
    \cpsfrag{D}{$\Dom$}
    \includegraphics[width=0.6\textwidth]{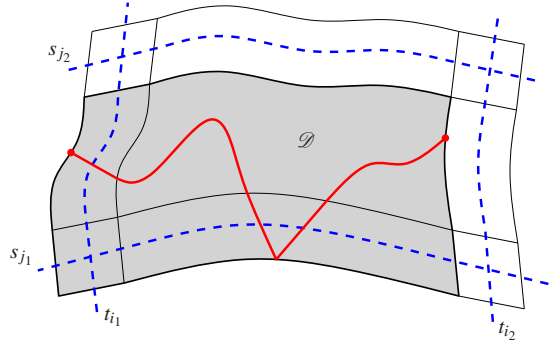}
  \end{center}
  \caption{The shaded domain $\Dom = \Dom(t_{i_1}, t_{i_2}; s_{j_1}, s_{j_2})$ 
    is crossed horizontally.}
  \label{fig:domain}
\end{figure}
 
The purpose of the following proposition is to restate 
the \bxp\ in terms of the 
geometry of the square grid.

\begin{prop}\label{grid_bxp}  
  Let $\eps>0$, $I\in \NN$, and let
  $G \in \sG(\eps,I)$.
  The graph $G$ has the \bxp\ if and only if there exists $\delta > 0$ 
  such that, for  $N\in \NN$ and $i,j \in \ZZ$,
  \begin{equation}\label{graph-theoretical_bxp}
    \PP_G\bigl [\Ch(t_i, t_{i + 2N}; s_j, s_{j + N})\bigr],
    \PP_G\bigl[\Cv(t_i,t_{i + N}; s_j, s_{j + 2N})\bigr]\geq \delta.
  \end{equation}
  Moreover, if \eqref{graph-theoretical_bxp} holds, then
  $G$ satisfies \BXP$(\de')$ with $\de'$ depending
  on $\de$, $\eps$, $I$ and not further on $G$.
\end{prop}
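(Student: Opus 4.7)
The plan is to establish the geometric equivalence between grid domains $\Dom(t_{i_1}, t_{i_2}; s_{j_1}, s_{j_2})$ and Euclidean rectangles under \BAC$(\eps)$ and \SGP$(I)$, and then to convert between grid crossings and box crossings using Harris--FKG gluing arguments in the spirit of classical RSW theory.

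\textbf{Step 1 (Geometric comparability).} First I would show that there exist constants $c_1, c_2 > 0$, depending only on $\eps$ and $I$, such that any grid domain $\Dom(t_i, t_{i+M}; s_j, s_{j+N})$ is contained in a Euclidean disc of radius at most $c_2(M+N)$ centred at some vertex $u$ of $G^\di$, and contains a Euclidean disc of radius at least $c_1 \min(M, N)$ centred at $u$. The upper bound would follow from Proposition \ref{distance_equiv}, using that part (c) of the \sgp\ bounds the $d^\di$-diameter of the domain by order $I(M+N)$. The lower bound relies on the fact that each rhombus has Euclidean diameter at most $2$ and area at least $\sin\eps$ by \eqref{rhombic-area}, so that the Euclidean separation between consecutive parallel tracks lies in a range depending only on $\eps$ and $I$.

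\textbf{Step 2 (The two directions).} Assume first the \bxp. For a grid domain $\Dom(t_i, t_{i+2N}; s_j, s_{j+N})$ with large $N$, Step 1 provides a concentric Euclidean rectangle of bounded aspect-ratio contained in $\Dom$, which is crossed horizontally with probability bounded below by the \bxp. Further applications of the \bxp\ to two auxiliary rectangles anchored near $t_i$ and $t_{i+2N}$, combined with the central crossing via Harris--FKG, produce an open horizontal crossing of $\Dom$; the vertical case is analogous. Conversely, assume \eqref{graph-theoretical_bxp}. Given a Euclidean rectangular domain $\B$ of aspect-ratio $2$, Step 1 allows one to tile $\B$ with a bounded (in $\eps, I$) number of grid domains, each of Euclidean dimensions comparable to those of $\B$, and to glue their crossings via Harris--FKG to produce an open crossing of $\B$.

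\textbf{Main obstacle.} The harder direction is the second, because $\B$ may be rotated arbitrarily while the tracks of the square grid have their own (possibly irregular) orientation; in particular the grid domains are shaped as parallelograms rather than axis-aligned rectangles. In the worst case (e.g.\ when $\B$ is at roughly $45$ degrees to the grid direction), several overlapping grid domains of \emph{both} horizontal and vertical type must be combined. The point is that the number of grid domains required to cover $\B$ depends only on $\eps$ and $I$ via the comparability constants of Step 1, so that the resulting constant $\de' = \de'(\de, \eps, I)$ is uniform in $G \in \sG(\eps, I)$, as demanded by the quantitative statement of the proposition.
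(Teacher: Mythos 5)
Your proposal is essentially the paper's argument: both decompose the plane into cells bounded by consecutive tracks of the square grid, exploit \SGP$(I)$ and \BAC$(\eps)$ to control the geometry of those cells (diameter from above, area from below), and glue cell-crossings by Harris--FKG along a chain to produce long crossings of Euclidean rectangles. Two caveats. First, the paper proves only the ``if'' half of the equivalence together with the quantitative statement (that \eqref{graph-theoretical_bxp} gives the \bxp), remarking that the converse follows by similar arguments but is not used; you attempt both. Second, your Step~1 lower bound --- that $\Dom(t_i,t_{i+M};s_j,s_{j+N})$ contains a Euclidean disc of radius $c_1\min(M,N)$ --- is stronger than anything the paper uses and is not obviously correct as argued: the facts you cite (each rhombus has Euclidean diameter $\le 2$ and area $\ge\sin\eps$) control the \emph{area} and the \emph{perimeter} of a grid domain, but area and perimeter alone do not bound the inradius of a general simply connected region from below. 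The paper sidesteps this entirely: it uses only the cell area lower bound $N^2\sin\eps$ and the diameter upper bound $2IN$, which together bound the number of cells meeting a fixed line-segment, and hence the Harris--FKG cost of chaining cell-crossings along that segment; the chain then lies in a tube of height $O(IN)$ and therefore yields a horizontal box-crossing. The inradius claim only feeds into your converse direction, so if you want that direction it needs a genuine geometric justification. A minor matter of phrasing: the cells the paper chains are a fixed small \emph{fraction} of the target rectangle (cell diameter $\le 2IN$ while the rectangle has side of order $IN$), not ``of Euclidean dimensions comparable to those of $\B$'' as you write.
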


\begin{proof}
  We prove only the final sentence of the proposition.
  The converse (that the \bxp\ implies \eqref{graph-theoretical_bxp} for some $\de>0$)
  holds by similar arguments, and will not be used this paper.  
  Let $G\in\sG(\eps,I)$, and  
  assume \eqref{graph-theoretical_bxp} with $\de>0$.

  Let $N \in \NN$. 
  For $i, j \in \ZZ$, the \emph{cell} $C_{i,j}$ is defined to be the
  domain\linebreak
$\Dom(t_{iN}, t_{(i+1)N}; s_{jN}, s_{(j+1)N})$.
  The cells have disjoint interiors and cover the plane. 
  Two distinct cells $C=C_{i,j}$, $C'=C_{k,l}$ are said to be \emph{adjacent} if 
  $(i,j)$ and $(k,l)$ are adjacent  vertices  of the square lattice,
  in which case we  write $C \sim C'$. More specifically, we write
  $C \simh C'$ (\resp, $C \simv C'$) if $|i-k|=1$ (\resp,  $|j-l|=1$). 
  With the adjacency relation $\sim$, 
  the graph having the set of cells as vertex-set 
  is isomorphic to the square lattice.

  Each cell has perimeter at most $4IN$, 
  and therefore diameter not exceeding $2IN$. 
  A cell contains at least $N^2$ faces of $G^\di$, 
  and thus (by \eqref{rhombic-area}) 
  has total area at least $N^2 \sin \eps$.
  
  For $\mu \in \NN$ with $\mu\geq 2I$, let 
  $u=(-\mu N, 0)$ and $v = (\mu N, 0)$ viewed as points  in the plane. 
  Let $S^N_{uv}$ be the set of cells that intersect the straight-line segment $uv$ with endpoints $u$, $v$,
and let $U^N_{uv}$ be the union of such cells.
  Let $R$ be the tube $uv + [-2IN,2IN]^2$. 
  Thus $R$ has area $8IN(\mu N + 2IN)$, and $U^N_{uv} \subseteq R$.
  Since each cell has area at least $N^2\sin\eps$,
  the cardinality of $S^N_{uv}$ satisfies
  \begin{equation}\label{G322}
    |S^N_{uv}| \le \frac{8IN(\mu N+2IN)}{N^2\sin\eps} = \frac{8I(\mu+2I)}{\sin\eps}.
  \end{equation}
  
  \begin{figure}[htb]
    \begin{center}
      \includegraphics[width=0.8\textwidth]{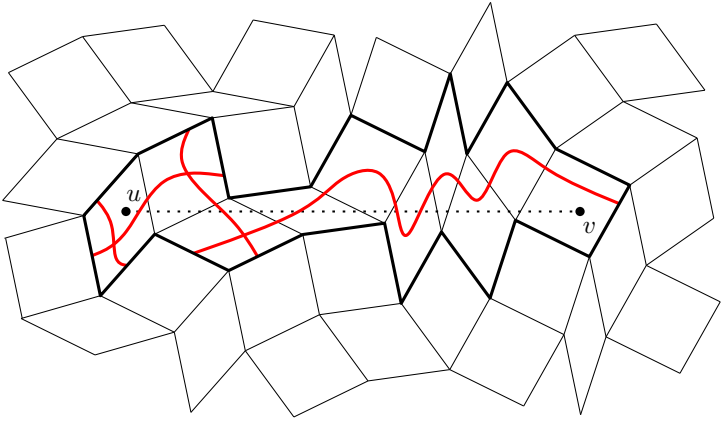}
    \end{center}
    \caption{The region $U^N_{uv}$ is outlined in bold, and  
      contains a chain of cells joining $u$ to $v$. 
      The events $H_k$ are drawn explicitly for the first two contiguous pairs of cells.}
    \label{fig:cell_chain}
  \end{figure}

  There exists a chain of cells $C_1, \ldots, C_K \in S^N_{uv}$ such that 
  $u \in C_1$, $v \in C_K$ and $C_k \sim C_{k+1}$ for $k=1,2,\dots,K-1$; 
  see Figure \ref{fig:cell_chain}.  
  Let $k\in\{1,2,\dots,K-1\}$, and assume $C_k \simh C_{k+1}$.
  Let $H_k$ be the event that $C_k$ and $C_{k+1}$ are crossed vertically,
  and $C_k \cup C_{k+1}$ is crossed horizontally.
  A similar definition
  holds when $C_k \simv C_{k+1}$, with vertical and horizontal interchanged.
  By \eqref{graph-theoretical_bxp} and the Harris--FKG inequality, 
  $\PP_G(H_k) \geq \de^3$.
  
  By the Harris--FKG inequality,  the fact that $K \le |S^N_{u,v}|$, and \eqref{G322}, 
  \begin{equation}\label{G324}
    \PP_G \left(\bigcap_{k = 1}^{K-1}H_k \right) \geq \delta^{3K} 
    \ge \de^{24I(\mu+2I)/\sin\eps}.
  \end{equation}
  If the event on the left side occurs, 
  the rectangle 
  $$
  S_{\mu,N} := \bigl[-(\mu - 2I)N, (\mu - 2I) N \bigr]\times [-2IN,2IN]
  $$ 
  of $\RR^2$ is crossed horizontally.
  
  Let $R_k=[-k,k] \times[-\frac12k, \frac12 k]$ where $k \ge 8I$.
  Pick $N$ such that $4IN \le k \le 8IN$, so that $R_k$ is `higher'
  and `shorter' than $S_{10 I,N}$. By \eqref{G324} with $\mu=10 I$,
  \begin{equation}\label{bxp_equiv_bound}
    \PP_G \left( R_k  \text{ is crossed horizontally}\right) 
    \ge \de'',
  \end{equation}
  where
  $\de'' = \de^{288I^2/\sin\eps}$.
  Smaller values of $k$ are handled by adjusting $\de''$ accordingly.
  
  The same argument is valid for translates and rotations of the 
  line-segment $uv$, and the proof is complete.
\end{proof}

\subsection{Isoradial square lattices}\label{sec:iso-sl}

An \emph{isoradial square lattice} is  an isoradial embedding of the square lattice $\ZZ^2$.
Isoradial square lattices, and only these graphs, have a square grid as track-system.

Let $G$ be an isoradial square lattice.
The diamond graph $G^\di$  possesses two families of \para\ tracks, 
namely the 
horizontal tracks $(s_j: j \in \ZZ)$ and the vertical tracks $(t_i: i \in \ZZ)$.
The graph $G^\di$, and hence the pair $(G,G^*)$ also, 
may be characterized in terms of two vectors of angles linked
to the transverse vectors. First, we orient $s_0$ in  an arbitrary way (interpreted as
`rightwards'). As we proceed
in the given direction along $s_0$, the crossing tracks $t_i$ are numbered in increasing sequence,
and are oriented from right to left (interpreted as `upwards'). 
Similarly, as we proceed along $t_0$, the crossing tracks
$s_j$ are numbered in increasing sequence and oriented from left to right. 
Using the notation of Section \ref{sec:track}, the transverse vector 
$\tau(s_j)$ has some transverse angle $\b_j$, and
similarly $\tau(t_i)$ has some transverse angle  $\g_i$. 
Rather than working with the $\g_i$, we work instead with $\a_i := \g_i-\pi$
as illustrated in  Figure \ref{fig:isoradial_square}. 
Write $\balpha=(\a_i: i\in\ZZ)$ and $\bbeta=(\b_j: j\in\ZZ)$, and note
that $\a_i \in[-\pi,\pi)$, $\b_j \in [0,2\pi)$.
We will generally assume that $G$ is rotated in such a way that $\alpha_0 =0$, 
so that $\b_j \in [0, \pi]$ and $\b_j-\pi \leq \a_i \leq \b_j$
for $i,j \in \ZZ$.  

The vertex of $G^\di$ adjacent to the four tracks 
$t_{i-1}$, $t_{i}$, $s_{j-1}$, $s_{j}$ is denoted $v_{i,j}$.
If not otherwise stated, 
we shall assume that the tracks are labelled in such a way that 
the vertex $v_{0,0}$ is a \emph{primal} vertex of $G^\di$. 

\begin{figure}[htb]
  \centering
  \includegraphics[width=0.6\textwidth]{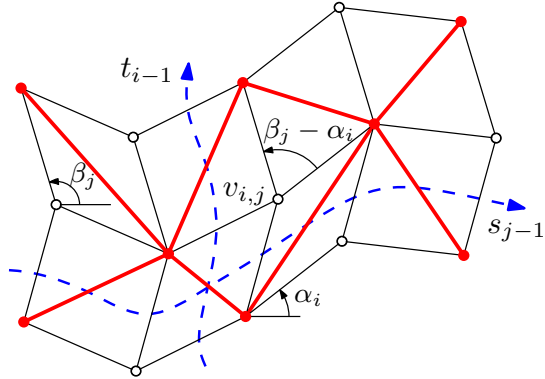}
  \caption{An isoradial square lattice (in red) with the associated diamond graph.
    The diamond graph is isomorphic to $\ZZ^2$, 
    and its embedding is characterized by two sequences  
    $\balpha$, $\bbeta$ of angles.}
  \label{fig:isoradial_square}
\end{figure}

Tracks $t_i$, $s_j$ intersect in a rhombus of $G^\di$ with 
sides $\tau(t_i)$, $\tau(s_j)$, $-\tau(t_i)$, $-\tau(s_j)$ in clockwise order,
and thus its internal angles are $\b_j-\a_i$ and $\pi-(\b_j-\a_i)$. 
Thus, $G$ satisfies the \bac\ \BAC$(\eps)$ if and only if
\begin{equation}\label{bounded_angles}
  \b_j - \a_i \in [\eps , \pi - \eps], \qquad  i,j \in \ZZ. 
\end{equation}
Conversely, for two vectors $\balpha$, $\bbeta$ satisfying \eqref{bounded_angles},
we may construct  the diamond graph denoted
$G_{\balpha, \bbeta}^\di$ as in Figure \ref{fig:isoradial_square}. 
This gives rise to an isoradial square lattice denoted $G_{\balpha, \bbeta}$ 
(and its dual)  satisfying \BAC$(\eps)$. 
We write $\PP_{\balpha, \bbeta}$ 
for the canonical measure of $G_{\balpha, \bbeta}$.

We introduce now some notation to be used later. 
For a set $W$ of vertices of $G^\di$, we define the  
\emph{height} of $W$ by
$$
h(W) = \sup \bigl\{j : \exists i \text{ with } v_{i,j} \in W\bigr\}.
$$
This definition extends in an obvious way to sets of edges.

In Section \ref{sec:sttiso} is described an operation of so-called 
`track-exchange' on isoradial square lattices. 
This introduces a potential for confusion between the \emph{label}
and the \emph{level} of  a track. 
In the $G_{\balpha,\bbeta}$ above, we say that $s_j$ is (initially) 
at \emph{level} $j$. 
The level of $s_j$ may change under track-exchange, but $v_{i,j}$
shall always refer to the vertex between levels $j-1$ and $j$ in
the new graph.

Due to this potential confusion, 
we may use a different notation for domains in square lattices
than for general graphs. 
For $M_1,M_2,N_1,N_2\in \ZZ$ with $M_1 \le M_2$, $N_1 \le N_2$,
let $B(M_1,M_2;N_1,N_2)$ be the subgraph of $G$ induced by
the subset of vertices lying in
$\{v_{i,j} : M_1 \leq i \leq M_2,\ N_1 \leq j \leq N_2\}$.
For $M, N \in \NN$, we use the abbreviated notation
$B(M, N)=B(-M,M;0,N)$. 
A \emph{horizontal crossing} of $B=B(M_1,M_2;N_1,N_2)$ is an open path of $B$ 
linking some vertex $v_{M_1, n_1}$ to some vertex $v_{M_2, n_2}$;
a \emph{vertical crossing} links some $v_{m_1,N_1}$ to some $v_{m_2,N_2}$.
We write $\Ch[B]$ (\resp, $\Cv[B]$) for the event that 
a box $B$ contains a horizontal (\resp, vertical) crossing. 
For a vertex $v=v_{i,j}$ of $G$, we write 
$B^{v}$ for the translate $\{v_{r,s}: v_{r-i,s-j} \in B\}$.

When applied to $G$, we have that  
\[ 
B(M_1,M_2;N_1,N_2) = \Dom(t_{M_1},t_{M_2};s_{N_1},s_{N_2}), 
\]
since $s_{N_1}$ and $s_{N_2}$
are the tracks at levels $N_1$ and $N_2$ \resp. 
As mentioned before, the latter will not always be the case.
Use of the notation $B$ emphasizes that domains are 
defined in terms of tracks at specific levels,
rather than of tracks with specific labels. 

The following lemma will be used in Section \ref{sec:proof1}.
\begin{lemma}\label{alternate_bxp}
  Let $G=(V,E)$ be an isoradial square lattice satisfying the \bac\ \BAC$(\eps)$ and the following.
  \begin{letlist}
  \item For $\rho \ge 1$, there exists $\eta(\rho)>0$ such that 
    \begin{equation*}
      \PP_G\bigl(\Ch[B^v(\lfloor \rho N \rfloor, N)]\bigr) \geq \eta(\rho),
      \qquad N \in \NN, \ v\in V.
    \end{equation*}
  \item There exist  $\rho_0, \eta_0 > 0$ such that 
    \begin{equation*}
      \PP_G\bigl(\Cv[B^v(N, \lfloor \rho_0 N \rfloor)]\bigr) \geq \eta_0,
      \qquad N \ge \rho_0^{-1}, \ v\in V.
    \end{equation*}
  \end{letlist}
  Then there exists $\de=\de(\rho_0,\eta_0, \eta(1), \eta(2\rho_0^{-1}), \eps)>0$ 
  such that $G$ has the \bxp\ \BXP$(\de)$.
\end{lemma}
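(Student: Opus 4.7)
The plan is to use the Harris--FKG inequality to combine hypotheses (a) and (b), verify both the horizontal- and vertical-crossing conditions of Proposition~\ref{grid_bxp}, and then invoke that proposition to conclude. The horizontal half is immediate: applying (a) with $\rho=1$ at $v = v_{i+N,j}$ gives $\PP_G(\Ch(t_i,t_{i+2N};s_j,s_{j+N})) \geq \eta(1)$, uniformly in $i,j$. The substance of the proof is therefore to produce a matching uniform lower bound on $\PP_G(\Cv(t_i,t_{i+N};s_j,s_{j+2N}))$.

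For that, I would stack $k := \lceil 8/\rho_0\rceil$ overlapping translates of the short-wide vertical boxes supplied by (b) and glue their overlaps with horizontal crossings supplied by (a). Set $h := \lfloor\rho_0 N/4\rfloor$ (taking $N$ large enough that $h \geq \rho_0^{-1}$), and for $0 \leq l \leq k-1$ define
\[
V_l := \Dom\bigl(t_i, t_{i+N};\, s_{j+lh},\, s_{j+lh+\lfloor \rho_0 N/2\rfloor}\bigr).
\]
By (b) with parameter $\lfloor N/2\rfloor$ one has $\PP_G(\Cv[V_l]) \geq \eta_0$; and by (a) with $\rho := \max\{2\rho_0^{-1},1\}$, applied to the overlap $O_l := V_{l-1}\cap V_l$ (a track-indexed box of track-width $N$ and track-height $h$), one has $\PP_G(\Ch[O_l]) \geq \eta(2\rho_0^{-1}) \wedge \eta(1)$. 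Harris--FKG then combines all $2k-1$ events, which co-occur with probability at least some $\delta_0 > 0$ depending only on $\rho_0, \eta_0, \eta(1), \eta(2\rho_0^{-1})$.

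On this joint event, inside each $O_l$ both the vertical crossings $\gamma_{l-1}$ and $\gamma_l$ contain subpaths running from the bottom track-edge (arc of $s_{j+lh}$) to the top track-edge (arc of $s_{j+(l-1)h+\lfloor\rho_0 N/2\rfloor}$), while the horizontal crossing $\eta_l$ of $O_l$ runs from the arc of $t_i$ to the arc of $t_{i+N}$. A standard Jordan-curve argument in the topological rectangle $O_l$ then forces $\eta_l$ to meet both subpaths; chaining through $l=1,\ldots,k-1$ produces a connected open path from the bottom track $s_j$ to $s_{j+(k-1)h+\lfloor\rho_0 N/2\rfloor}$, which by choice of $k$ lies at or above $s_{j+2N}$. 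Truncation gives the required vertical crossing of $\Dom(t_i,t_{i+N};s_j,s_{j+2N})$. Since the isoradial square lattice $G$ trivially satisfies \SGP$(1)$, Proposition~\ref{grid_bxp} then delivers \BXP$(\delta)$ for some $\delta$ depending only on $\rho_0, \eta_0, \eta(1), \eta(2\rho_0^{-1}), \eps$, as required.

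The principal difficulty I anticipate is the planarity step. One must verify that each overlap $O_l$ really is a simply connected subregion of $\RR^2$ whose boundary decomposes cleanly into four arcs---two along horizontal tracks and two along the vertical tracks $t_i$, $t_{i+N}$---so that the classical fact that a horizontal crossing of a topological rectangle intersects every vertical crossing applies in the curvilinear setting. The \bac\ rules out degenerate track configurations and makes this essentially routine, but identifying the correct subpaths of $\gamma_{l-1}$ and $\gamma_l$ within $O_l$, and checking that the chained union is an open path that really crosses from $s_j$ past $s_{j+2N}$, will require careful topological bookkeeping. No ideas beyond the Jordan curve theorem and Harris--FKG should be needed.
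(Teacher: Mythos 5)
Your proposal is correct and follows essentially the same route as the paper: the paper also verifies the hypotheses of Proposition~\ref{grid_bxp} by the standard RSW gluing of overlapping boxes via the Harris--FKG inequality, simply delegating that gluing step to \cite[Prop.\ 3.1 and Remark 3.2]{GM1} where you have reconstructed it explicitly. The only caveat is a routine rounding one: $2\lfloor 2\rho_0^{-1}N'\rfloor$ need not exactly match the overlap's track-width, so one should either replace $\eta(\cdot)$ by its non-increasing envelope $\rho\mapsto\inf_{\rho'\le\rho}\eta(\rho')$ (harmless, since narrower boxes of the same height are crossed at least as often) or glue with a slightly wider box of aspect-ratio, say, $3\rho_0^{-1}$.
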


\begin{proof}[Outline proof]
  Assume (a) and (b) hold. Just as in the proof of
  \cite[Prop.\ 3.1]{GM1} (see also \cite[Remark 3.2]{GM1}), 
  the crossing probabilities of
  boxes of $G$ with aspect-ratio $2$ and horizontal/vertical
  orientations are bounded away from $0$ by a constant that depends
  only on the aspect-ratios of the boxes illustrated in
  \cite[Fig.\ 3.1]{GM1}.
  (Here, the boxes in question are those of $G$ viewed as an isoradial square lattice, that is,
  boxes of the form $B(\cdot;\cdot)$ defined before the lemma.)
  Therefore, the hypothesis of Proposition \ref{grid_bxp} 
  holds with suitable constants, and the claim follows
  from its conclusion.
\end{proof}

\section{The \stt}\label{sec:stt}

We review the basic action of the \stt, and show its harmony
with isoradial embeddings. It is shown in Section \ref{sec:xch} how a sequence of \stt s
may be used to exchange two tracks of an isoradial square lattice.

\subsection{Star--triangle transformation}\label{sec:stt0}

The following material is standard but is included for completeness.
For proofs and details see, for example, \cite{GM1}. 

Consider the triangle $\De=(V,E)$ 
and the star $\De'=(V',E')$ of Figure \ref{fig:star_triangle_transformation}. 
Let $\bp=(p_0,p_1,p_2) \in [0,1)^3$ be a triplet of parameters.
Write $\Om=\{0,1\}^E$ with associated product probability measure $\PP_\bp^\tri$
with intensities $p_i$ (as in the left diagram of Figure \ref{fig:star_triangle_transformation}),
and $\Om'=\{0,1\}^{E'}$ with associated measure $\PP_{1-\bp}^\hex$, 
with intensities $1-p_i$ (as in the right diagram of Figure \ref{fig:star_triangle_transformation}).
Let $\omega\in\Om$ and $\om'\in\Om'$.
For each graph we may consider open connections between its vertices,
and we abuse notation by writing, for example, $x \xleftrightarrow{\De,\om} y$ for the \emph{indicator
function} of the event that $x$ and $y$ are connected 
in $\De$ by an open path of $\om$. 
Thus connections in $\De$ are described by the family 
$( x \xleftrightarrow{\De , \omega} y: x,y \in V)$ of random variables, and similarly for $\De'$.

\begin{figure}[htb]
 \centering
    \includegraphics{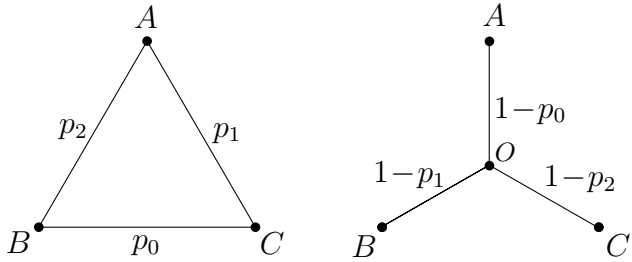}
  \caption{The star--triangle transformation}
  \label{fig:star_triangle_transformation}
\end{figure}

\begin{prop}[Star--triangle transformation] \label{simple_star_triangle}
  Let $\bp\in[0,1)^3$ be such that
  \begin{align}
    p_0 +p_1 + p_2 - p_0 p_1 p_2 = 1. \label{self_dual}
  \end{align}
  The families
  $$
  \left( x \xleftrightarrow{\De, \omega} y : x,y = A,B,C\right), \quad
  \left( x \xleftrightarrow{\De', \om'} y :x,y = A,B,C\right),
  $$
  have the same law. 
\end{prop}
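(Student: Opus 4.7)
The plan is a direct verification by enumeration. The joint law of the indicator family $\bigl(x\xleftrightarrow{} y : x,y\in\{A,B,C\}\bigr)$ is determined by the random set-partition $\Pi$ of $\{A,B,C\}$ into open clusters (restricted to these three vertices, ignoring the central vertex $O$ in the star case). The set $\{A,B,C\}$ admits exactly five partitions --- the discrete one $\pi_0=\{\{A\},\{B\},\{C\}\}$, the three pair-partitions $\pi_{AB}$, $\pi_{AC}$, $\pi_{BC}$, and the full one $\pi_1=\{\{A,B,C\}\}$ --- so it suffices to verify $\PP_\bp^\tri(\Pi=\pi)=\PP_{1-\bp}^\hex(\Pi=\pi)$ for each such $\pi$.

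By the evident symmetry, only $\pi_0$, one pair-partition, and $\pi_1$ need to be checked. The pair-partitions are the easy cases and do not use \eqref{self_dual}. Taking $\pi_{AB}$ as representative: in $\De$ this partition is realised exactly when the edge $AB$ is open and the edges $AC$, $BC$ are closed, contributing $(1-p_0)(1-p_1)p_2$; in $\De'$ it is realised exactly when $OA$ and $OB$ are open while $OC$ is closed, again contributing $(1-p_0)(1-p_1)p_2$. The two sides agree identically, and likewise for $\pi_{AC}$, $\pi_{BC}$.

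The condition \eqref{self_dual} enters precisely for $\pi_0$. In $\De$, $\pi_0$ requires all three edges closed, giving $(1-p_0)(1-p_1)(1-p_2)$. In $\De'$, $\pi_0$ occurs iff at most one edge incident to $O$ is open (otherwise some pair of $A,B,C$ would be connected through $O$), giving
\[
p_0p_1p_2 + (1-p_0)p_1p_2 + p_0(1-p_1)p_2 + p_0p_1(1-p_2).
\]
A short expansion of both expressions shows that their equality is precisely the relation $p_0+p_1+p_2-p_0p_1p_2=1$, that is, \eqref{self_dual}. The identity for $\pi_1$ then follows automatically, since the five partition-probabilities must sum to one under each measure.

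Main obstacle: there is essentially none; the proposition is a finite combinatorial identity and the computation is routine. The only point that requires attention is correctly classifying which configurations on the four-vertex star $\De'$ induce each given partition on $\{A,B,C\}$, remembering that the auxiliary central vertex $O$ is not part of the partition being compared.
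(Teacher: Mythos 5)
Your proof is correct and follows the standard enumerative argument for this classical identity; the paper itself does not prove the proposition but defers to \cite{GM1}, where essentially the same direct verification by partitions of $\{A,B,C\}$ is carried out. Your organizing observation — that the pair-partitions match without the constraint, $\pi_0$ forces \eqref{self_dual}, and $\pi_1$ then follows by complementation — is exactly the clean way to present it.
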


\begin{figure}[htb]
  \begin{center}
    \includegraphics[width=1.0\textwidth]{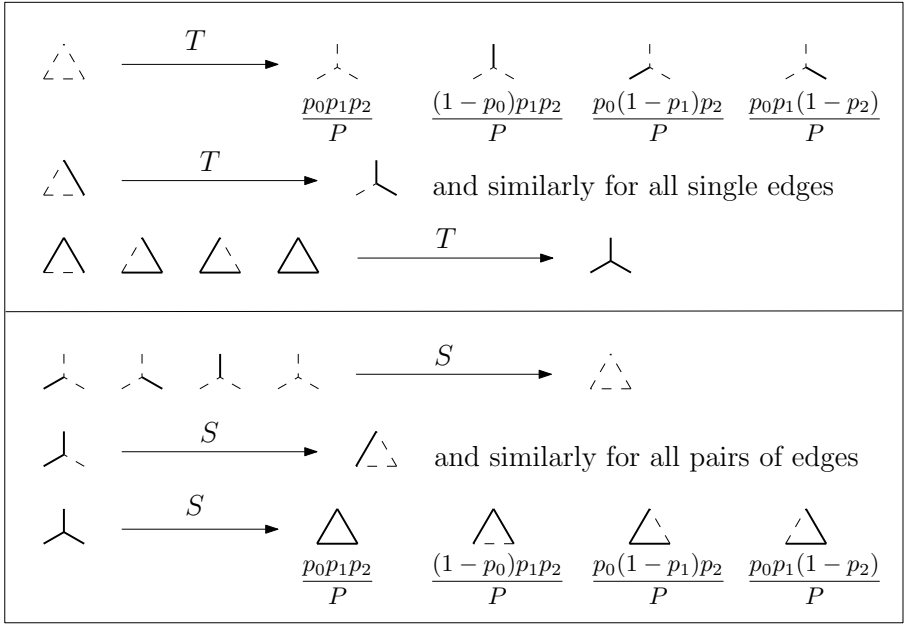}
  \end{center}
  \caption{The random maps $T$ and $S$. Note
that $P:=(1-p_0)(1-p_1)(1-p_2)$.}
  \label{fig:simple_transformation_coupling}
\end{figure}

Next we explore couplings of the two measures.
Let $\bp\in[0,1)^3$ satisfy \eqref{self_dual}, 
and let $\Om$ (\resp, $\Om'$) have associated measure $\PP_\bp^\tri$ (\resp, $\PP_{1-\bp}^\hex$) as above. 
There exist random mappings $T:\Om \to \Om'$ and $S: \Om'\to\Om$ 
such that $T(\om)$ has law $\PP_{1-\bp}^\hex$, and 
$S(\om')$ has law $\PP_\bp^\tri$. 
Such mappings are given in Figure \ref{fig:simple_transformation_coupling},
and we shall not specify them more formally here. 
Note from the figure that $T(\om)$ 
is deterministic for seven of the eight elements of $\Om$; 
only in the eighth case does $T(\om)$ involve further randomness. 
Similarly, $S(\om')$ is deterministic except for one special $\om'$.
Each probability in the figure is well defined since 
$P := (1-p_0)(1-p_1)(1-p_2)>0$.

\begin{prop}[Star--triangle coupling]\label{prop:st-coupling}
  Let $\bp \in [0,1)^3$ satisfy \eqref{self_dual}
  and let $S$ and $T$ be as in Figure \ref{fig:simple_transformation_coupling}.
  With $\omega$ and $\om'$ sampled as above,
  \begin{letlist}
  \item
    $T(\omega)$ has the same law as $\om'$, 
  \item $S(\om')$ has the same law as $\omega$,
  \item for  $x,y \in \{ A,B,C \}$, $x \xleftrightarrow{\De,\omega} y$
    if and only if  $x \xleftrightarrow{\De',T(\omega)} y$,
  \item for  $x,y \in \{ A,B,C \}$, $x \xleftrightarrow{\De',\omega'} y$
    if and only if  $x \xleftrightarrow{\De,S(\omega')} y$.
  \end{letlist}
\end{prop}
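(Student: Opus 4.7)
The proof is essentially a finite verification: since $|\Omega| = |\Omega'| = 8$, everything reduces to a case analysis of the maps $T$ and $S$ prescribed by Figure \ref{fig:simple_transformation_coupling}. I would organize the argument into the four parts (a)--(d), treating (c),(d) first (they are purely combinatorial) and then (a),(b) (which use the identity \eqref{self_dual}).

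For parts (c) and (d), the plan is to read off Figure \ref{fig:simple_transformation_coupling} and check, case by case, that $T$ and $S$ preserve the connectivity pattern among $\{A,B,C\}$. In seven of the eight configurations in $\Omega$, the image $T(\omega)$ is deterministic, and one verifies directly by inspecting the triangle and the star that the set $\{(x,y): x\xleftrightarrow{\De,\omega} y\}$ equals $\{(x,y): x\xleftrightarrow{\De',T(\omega)} y\}$. The one remaining configuration of $\Omega$ is the all-closed state, which maps (with various probabilities summing to $1$) to any of the four configurations of $\Omega'$ that still induce no connections among $\{A,B,C\}$; hence connectivity is preserved in every case. The argument for $S$ is symmetric: the seven deterministic cases are immediate, and the single random case is the configuration of $\Omega'$ in which all three arms of the star are open (inducing connections between all three of $A,B,C$), and each of the images $S(\omega')$ preserves this property.

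For parts (a) and (b), the plan is to compute, for every target $\omega'\in\Omega'$, the pushforward probability
\[
  \PP(T(\omega)=\omega') \;=\; \sum_{\omega\in\Omega} \PP_\bp^\tri(\omega)\,\PP\bigl(T(\omega)=\omega'\bigm|\omega\bigr),
\]
and to match it with $\PP_{1-\bp}^\hex(\omega')$. For each fixed $\omega'$, only those $\omega$ with the same connectivity pattern can contribute (by (c)), so the sum has at most two terms. Using the explicit conditional probabilities from Figure \ref{fig:simple_transformation_coupling} (normalized by $P=(1-p_0)(1-p_1)(1-p_2)$) one obtains the desired product form for $\PP_{1-\bp}^\hex(\omega')$, provided that \eqref{self_dual} holds; indeed, \eqref{self_dual} is exactly the identity that makes the total mass of the ``all-open'' triangle configuration split correctly among the four star configurations that induce connections among all three terminals. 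Part (b) is verified analogously, with the roles of ``all-closed triangle'' and ``all-open star'' exchanged.

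The main obstacle, and the only nontrivial point, is the single exceptional configuration in each direction where randomness is needed: one has to choose the four conditional probabilities so as to simultaneously (i) preserve connectivity pathwise, and (ii) reproduce the product law on the other side. Both requirements pin down the same conditional distribution, and compatibility is precisely the self-dual relation \eqref{self_dual}. Once this is verified, the statements (a)--(d) follow immediately from the case analysis outlined above.
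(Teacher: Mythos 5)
The paper does not actually prove Proposition~\ref{prop:st-coupling}: the surrounding text declares it ``standard,'' exhibits $T$ and $S$ only pictorially in Figure~\ref{fig:simple_transformation_coupling}, and refers the reader to \cite{GM1} for details. So there is no in-paper proof to compare against; your blind reconstruction by finite case check is the right approach and is what the cited reference does. The key observation -- that $T$ is deterministic except on the all-closed triangle, $S$ deterministic except on the all-open star, and the self-dual equation \eqref{self_dual} is what reconciles the masses -- is correct.

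Two slips should be fixed, however. First, ``the sum has at most two terms'' is wrong: for the target $\om'$ with all three arms open, \emph{four} triangle configurations (the ones with two or three open edges) contribute, each deterministically; for a one-pair $\om'$ exactly one $\om$ contributes; for a no-connection $\om'$ only the all-closed triangle contributes, with the conditional probability from the figure. So the sum has one or four terms, never two. Second, the final paragraph's statement that \eqref{self_dual} ``makes the total mass of the all-open triangle configuration split correctly among the four star configurations that induce connections among all three terminals'' is internally inconsistent with your (correct) earlier analysis and also with the combinatorics: there is only \emph{one} star configuration inducing all-pairwise connections. What \eqref{self_dual} actually guarantees is that the total $\PP^\tri_\bp$-mass of the four ``all-connected'' triangle configurations equals $(1-p_0)(1-p_1)(1-p_2)$, the mass of the single all-open star configuration; equivalently, that the mass $(1-p_0)(1-p_1)(1-p_2)$ of the all-closed triangle equals the total $\PP^\hex_{1-\bp}$-mass of the four ``no-connection'' star configurations. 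With those corrections, the argument is sound.
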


\subsection{The \stt\ for isoradial graphs}\label{sec:sttiso}

Let $G=(V,E)$ be an isoradial graph, and let $\De$ be a triangle of $G$ with vertices $A$, $B$, $C$.
Seen as a transformation between graphs, the \stt\ changes $\De$ into a star $\De'$ with a
new central vertex $O\in\RR^2$. It turns out that $O$ may be
chosen in such a way that the new graph, denoted $G'$,  is isoradial
also. The right way of seeing this is via the
diamond graph $G^\di$, as illustrated in Figure \ref{fig:isoradial_star_triangle}.
This construction has its roots in the $Z$-invariant Ising model of
Baxter \cite{Baxter_book,Bax86}, studied in the context of isoradial graphs by
Mercat \cite{Merc}, Kenyon \cite{Ken02}, and  Costa-Santos \cite{Costa}
(see also \cite{BdTB}).

\begin{figure}[htb]
  \begin{center}
    \includegraphics[width=0.8\textwidth]{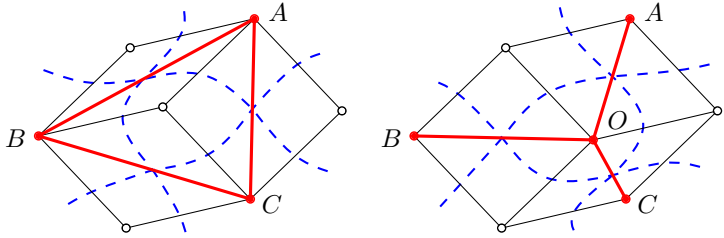}
  \end{center}
  \caption{The triangle on the left is replaced by the star on the right. The new vertex $O$
is the circumcentre of the three dual vertices of the surrounding hexagon of $G^\di$.}
  \label{fig:isoradial_star_triangle}
\end{figure}

The triangle $\De$ comprises the diagonals of three rhombi of $G^\di$. These rhombi form the
interior of a \emph{hexagon} with primary vertices $A$, $B$, $C$ and three further dual
vertices. Let $O$ be the circumcentre of these dual vertices. Three new rhombi
are formed from the hexagon augmented by $O$ (as shown). 
The star $\De'$ has edges $AO$, $BO$, $CO$, and the
ensuing graph is isoradial (since it stems from a rhombic tiling).

By an examination of the angles in the figure,
the canonical measure on $\De'$ is that obtained from $\De$ by the \stt\  of Section \ref{sec:stt0}.
That is, the \stt\ maps $\PP_G$ to $\PP_{G'}$.
Furthermore, for $\eps>0$,
\begin{equation}\label{G622}
\mbox{$G$ satisfies  \BAC$(\eps)$
if and only if $G'$ satisfies \BAC$(\eps)$.}
\end{equation}

We shall sometimes view the \stt\ as acting on the rhombic tiling $G^\di$
rather than on $G$, and thereby it acts simultaneously on
$G$ and its dual $G^*$.  

The \stt\ of Figure \ref{fig:isoradial_star_triangle} is said to 
act on the \emph{track-triangle} formed by the
tracks on the left side, and to 
\emph{slide} one of the tracks illustrated there \emph{over the intersection} of the other two,
thus forming the track-triangle on the right side.

A \stt\ maps an open
path of $G$  to an open path of $G'$. We shall not spell this out in detail,
but  recall the ideas from \cite{GM1}. Let $\pi$ be an open path
of $G$ that intersects some hexagon $H$ of $G^\di$, and consider the \stt\  $\sigma$
acting in $H$. Since $\sigma$ preserves open connections within $H$, it maps $\pi$ to
some $\sigma(\pi)$ containing an open path. A minor complication arises if $H$
contains a star of $G$ and $\pi$ ends at the centre of this star. In this case,
the endpoint of $\sigma(\pi)$ is a vertex of the resulting triangle.

\subsection{Track-exchange in an isoradial square lattice}\label{sec:xch}

Let $G$ be an isoradial square lattice. The tracks of $G$ are to be viewed as doubly-infinite
sequences of rhombi with a common vector. In this section, we describe a  procedure
for interchanging two consecutive parallel tracks.

Consider a vertical strip $G = G_{\balpha, \bbeta}$ of the square lattice, 
where $\balpha = (\a_i: -M \leq i \leq N)$ and $\bbeta = (\b_j :j \in \ZZ)$
are vectors of angles satisfying \BAC$(\eps)$, \eqref{bounded_angles}. 
Thus every \emph{finite} face of $G$ has circumradius $1$.
(There are also \emph{infinite} faces to the left and right of the strip.)
There are two types of tracks in $G$, the finite horizontal tracks $(s_j)$, 
and the infinite vertical tracks $(t_i)$. 
We explain next how to exchange two adjacent horizontal tracks by a sequence of \stt s,
employing a process that is implicit in \cite{Ken02}. 
Track $s_j$ has transverse angle $\beta_j$, as illustrated in Figure 
\ref{fig:isoradial_square},
and the `exchange' of two tracks may be interpreted as the 
interchange of their transverse angles.

We  write $\Sigma_j$ for the operation that exchanges the tracks at levels
${j-1}$ and $j$. When applied to $G$, $\Si_j$ exchanges $s_{j-1}$ and $s_j$,
and we describe $\Si_j$ by reference to $G^\di$. 
If $\beta_j = \beta_{j-1}$, there is nothing to do, and $\Si_j$ interchanges the labels
of the tracks without changing the transverse angles.
Assume $\beta_j > \beta_{j-1}$.
We insert a new rhombus on the left side of the strip formed of $s_{j-1}$ and $s_j$, 
marked in green in Figure \ref{fig:line_exchange}.
This creates a hexagon in $G^\di$, containing either a triangle or a star of $G$.
The \stt\ is applied within this hexagon, thereby moving the new rhombus to the right. 
By repeated \stt s, we  `slide' the new rhombus
along the two tracks from left to right. 
When it reaches the right side, it is removed.
In the new graph, the original tracks $s_{j-1}$ and $s_j$ have been exchanged 
(or, more precisely, the transverse angles of the tracks at levels $j-1$ and $j$ have been interchanged).
Let $\Si_j$ be the transformation thus described, and say
that $\Sigma_j$ `goes from left to right' when $\beta_j > \beta_{j-1}$.
If $\beta_j < \beta_{j-1}$, we construct $\Si_j$ `from right to left'.

\begin{figure}[htb]
  \begin{center}
    \includegraphics[width=0.9\textwidth]{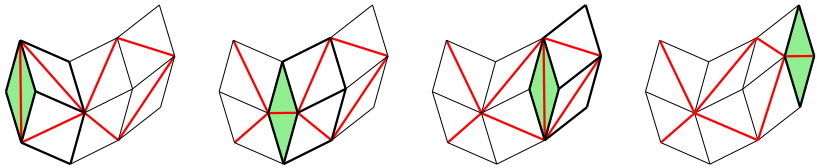}
  \end{center}
  \caption{A new rhombus is introduced on the left (marked in green).
    This is then `slid' along the pair of tracks by a sequence of \stt s,
    until it reaches the right side where it is removed.}
  \label{fig:line_exchange}
\end{figure}

Viewed as an operation on graphs, $\Sigma_j$ replaces an isoradial graph $G$ by another
isoradial graph $\Si_j(G)$. 
It operates on configurations also, as follows.
Let $\omega$ be an edge-configuration of  $G$, and assign a random
state to the new `green' edge with the distribution appropriate
to the isoradial embedding. 
The star--triangle transformations used in $\Sigma_j$ 
are independent applications of the kernels $T$ and $S$ of 
Figure \ref{fig:simple_transformation_coupling}.
The ensuing configuration on $\Si_j(G)$ is written $\Sigma_j(\omega)$. 
Thus $\Si_j$ is a random operator on $\om$, with randomness stemming from
the extra edge and the \stt s. Note that
$\Sigma_j$ is not a local transformation, 
in that the state of an edge in $\Sigma_j(G)$ 
depends on the states of certain distant edges.

\begin{figure}[htb]
  \begin{center}
    \includegraphics[width=1.0\textwidth]{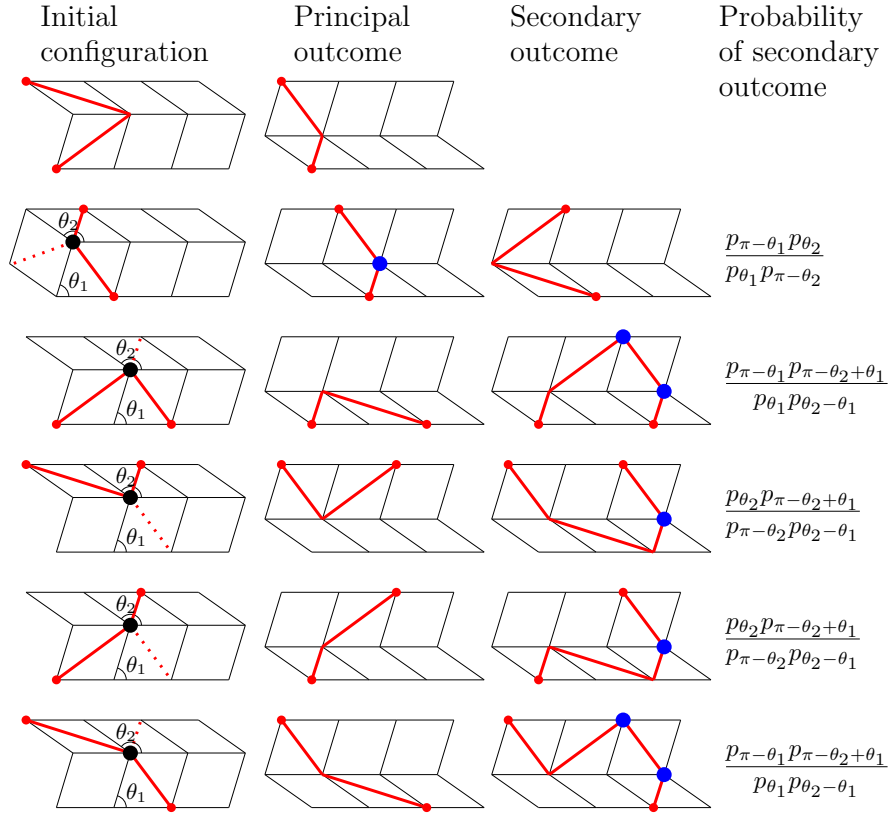}
  \end{center}
  \caption{The six possible ways in which $\gamma$ 
    may intersect the strip in two edges between height $j-1$ and $j+1$, 
    and the corresponding actions of $\Sigma_j$. 
    In five cases, the resulting configuration can be non-deterministic. 
    If the dotted edge is closed, 
    the resulting configuration is in the second column. 
    If it is open,
    the resulting configuration is that of the third column with the given probability
    (recall from \eqref{G101} that $p_{\pi-\a} = 1-p_\a$).
    The movement of black vertices can cause the height increases
    marked in blue. The tracks $s_k$ are drawn as horizontal for simplicity,
    and $\theta_1=\b'-\a_{m}$, $\theta_2 = \b - \a_m$, where $v_{m,j}$ denotes the black vertex,
    and $\b'$/$\b$ is the transverse angle of the lower/upper track. }
  \label{fig:path_transfo}
\end{figure}

Let $\sigma_j$ denote the transposition of the $(j-1)$th and $j$th terms of a sequence.
We may write
\begin{align*}
  \Sigma_j( G_{\balpha, \bbeta},\PP_{\balpha,\bbeta})  = 
  (G_{\balpha, \sigma_j\bbeta},\PP_{\balpha,\sigma_j\bbeta}).
\end{align*}
When applying the $\Si_j$ in sequence, we distinguish between the \emph{label} $s_j$
of  a track and its \emph{level}. 
Thus, $\Si_j$ interchanges the tracks currently at levels $j-1$ and $j$.

\begin{figure}[htb]
  \begin{center}
    \includegraphics[width=0.8\textwidth]{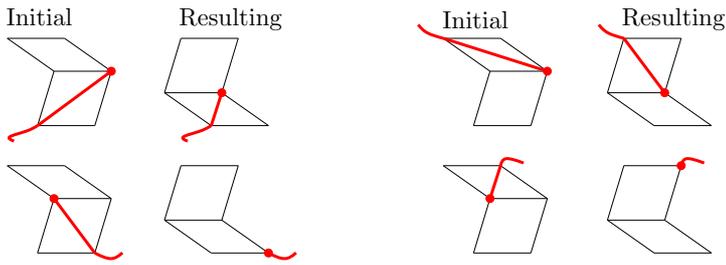}
  \end{center}
  \caption{If an endpoint of $\gamma$ lies between the two tracks, 
    the corresponding edge is sometimes contracted to a single point. }
  \label{fig:path_transfo_ep}
\end{figure}

We consider next the transportation of open paths.
Let $\omega$ be a configuration on $G_{\balpha, \bbeta}$, and let $\gamma$ be an $\omega$-open path.
The action of a \stt\ on $\g$ is discussed 
in detail in \cite[Sect.\ 2.3]{GM1}.
The transformation $\Sigma_j$ comprises three steps: 
the addition of an edge to $G_{\balpha,\bbeta}$, a series of star--triangle transformations, and the removal of an edge. 
The first step does not change $\gamma$, and the effect of the second step is
discussed in Section \ref{sec:sttiso} and   the following paragraphs. 
If the removed edge
is in the image of the path $\g$ at the moment of removal, we say that $\Si_j$ 
\emph{breaks} $\gamma$. 
Thus, $\Si_j(\gamma)$ is an open path of $\Si_j(G)$ whenever
$\Si_j$ does not  break $\gamma$.
In applying the $\Si_j$, we shall choose the strip-width $M+N$ 
sufficiently large that 
open paths of the requisite type do not reach the boundary, 
and are therefore not broken. 

Finally, we summarise in Figures \ref{fig:path_transfo}--\ref{fig:path_transfo_ep}
the action of $\Sigma_j$ on the path $\gamma$, with $M$ and $N$ chosen sufficiently large. 
Consider two tracks 
$s'$, $s$ at respective levels $j-1$ and $j$, with transverse angles $\b'$ and $\b$.
Edges of $\gamma$ lying outside levels ${j-1}$ and $j$ are unchanged by $\Si_j$. 
The intersection of $\gamma$ with these two tracks forms a set of open sub-paths
of length either $1$ or $2$; there are four 
possible types of length $1$, and six of length $2$. 
We do not describe this in detail, but refer the reader to the figures, which are
drawn for the case  $\b > \b'$.
The path $\g$ may cross the tracks in more than one of the diagrams
on the left of  Figure \ref{fig:path_transfo}, and the image path contains
an appropriate subset of the edges in the listed outcomes. 
Note that, if the intersections of $\gamma$ with $s$ and $s'$ 
are at distance at least $2$ from the lateral boundaries, then 
$\Si_j$ does not break $\gamma$,

In the special case when $\b = \b'$, 
$\Sigma_j$ interchanges the labels of $s'$ and $s$ but
alters neither embedding nor configuration. 
In this degenerate case, we set $\Sigma_j(\gamma) = \gamma$, and note that
Figure \ref{fig:path_transfo} remains accurate. 

\section{Proof of Theorem \ref{main}: Isoradial square lattices}\label{sec:proof1}

\subsection{Outline of proof}\label{sec:outline}
The proof for isoradial square lattices
is based on Proposition \ref{bxp_transport}, following.
For
$\bef \in [0,2\pi)$, we write $G_{\balpha, \bef}$ for the isoradial square lattice 
generated by the angle-sequence $\balpha$ and the constant sequence $(\bef)$. 

\begin{prop}\label{bxp_transport}
  Let $\de,\eps>0$. There exists $\de'=\de'(\de,\eps)>0$ such that the
  following holds.  
  Let $G_{\balpha, \bbeta}$ be an isoradial square lattice 
  satisfying \BAC$(\eps)$,
  and let $\bef \in [0,2\pi)$ be such that $\balpha$ and the 
  constant sequence  $(\bef)$ 
  satisfy \BAC$(\eps)$, \eqref{bounded_angles}.
  If $G_{\balpha, \bef}$ satisfies \BXP$(\de)$, 
  then $G_{\balpha, \bbeta}$ satisfies \BXP$(\de')$.
\end{prop}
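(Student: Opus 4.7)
The plan is to transport the box-crossing property from $G_{\balpha,\bef}$ to $G_{\balpha,\bbeta}$ by introducing $\beta$-angle tracks `at infinity' and sliding them via \stt s into the region of interest. By Lemma \ref{alternate_bxp} it suffices to bound below, uniformly in $N$ and in the chosen translate, the probability of a horizontal crossing of $B(\lfloor\rho N\rfloor,N)$ in $G_{\balpha,\bbeta}$ for two suitable values of $\rho\ge 1$, together with the analogous bound for vertical crossings of $B(N,\lfloor\rho_0 N\rfloor)$.

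Fix such an $N$ and $\rho$, and a large constant $M$ to be chosen later. Consider the angle sequence $\bbeta^{(0)}$ that equals $\bef$ at every level except at levels $M+1,\dots,M+N+1$, where it takes the values $\beta_0,\dots,\beta_N$ in order. Since both $(\balpha,\bef)$ and $(\balpha,\bbeta)$ satisfy \eqref{bounded_angles} with constant $\eps$, so does $(\balpha,\bbeta^{(0)})$, and hence $G^{(0)}:=G_{\balpha,\bbeta^{(0)}}$ is an isoradial square lattice satisfying \BAC$(\eps)$. Every horizontal track lying at a level in $\{0,\dots,N\}$ of $G^{(0)}$ has transverse angle $\bef$, and therefore the joint law of the edge-states of $G^{(0)}$ inside the graph-theoretic box $B:=B(\lfloor\rho N\rfloor,N)$ coincides with its counterpart in $G_{\balpha,\bef}$. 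Since $\Ch[B]$ is measurable with respect to these edge-states,
\[
\PP_{\balpha,\bbeta^{(0)}}(\Ch[B]) = \PP_{\balpha,\bef}(\Ch[B]) \ge \de'',
\]
where the last inequality follows from the \bxp\ of $G_{\balpha,\bef}$, combined with Proposition \ref{grid_bxp} and the \bac\ to compare graph-theoretic and Euclidean boxes.

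Next, slide the $\beta$-tracks down to levels $0,\dots,N$ by an ordered sequence of track-exchanges $\Sigma_j$, producing a graph $G^{(*)}$ whose angle sequence is $\beta_j$ at level $j$ for $0\le j\le N$ and $\bef$ elsewhere. By Proposition \ref{prop:st-coupling}, this induces a measure-preserving coupling between $\PP_{\balpha,\bbeta^{(0)}}$ and the canonical measure on $G^{(*)}$, and the image of any open horizontal crossing of $B$ under this coupling is open in $G^{(*)}$, provided that the lateral strip-width (polynomial in $M$ and $N$) is chosen so large that no $\Sigma_j$ breaks the crossing at the left or right boundary. Since $G^{(*)}$ coincides with $G_{\balpha,\bbeta}$ at levels $0,\dots,N$, this image is also an open path of $G_{\balpha,\bbeta}$.

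The principal obstacle is to control the shape of the image crossing: naively, one fears a cumulative height-drift proportional to the number $O(MN)$ of applied \stt s, which would be incompatible with staying inside a box of height $O(N)$. The key observation, extracted from Figures \ref{fig:path_transfo}--\ref{fig:path_transfo_ep}, is that at each horizontal position the height of the path can increase by at most a bounded amount each time a single $\beta$-track is slid past it, and only $N+1$ such tracks ever cross that position during the entire procedure. Hence the image crossing is contained in a box $B(\lfloor\rho N\rfloor, CN)$ of $G_{\balpha,\bbeta}$, up to a bounded vertical translation, for some $C=C(\eps)$. Rescaling $N$ and $\rho$ by $C$ yields hypothesis~(a) of Lemma \ref{alternate_bxp} for all aspect ratios $\ge 1$; hypothesis~(b) is obtained by a symmetric construction (sliding $\beta$-tracks upward from levels below the box and controlling horizontal drift of vertical crossings). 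Lemma \ref{alternate_bxp} then yields \BXP$(\de')$ for $G_{\balpha,\bbeta}$ with $\de'=\de'(\de,\eps)$.
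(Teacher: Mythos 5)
Your high-level strategy --- slide the $\beta$-angle tracks into place via \stt s, having started with a graph where they are harmlessly far from the crossing box --- is essentially the paper's idea. But the crucial quantitative step is wrong, and it is exactly the step the paper spends the most effort on.

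The difficulty is this. Your ``key observation'' that sliding one $\beta$-track past the path raises the path's maximum height by at most a bounded amount (indeed, by at most $1$) is correct; this is precisely Lemma~\ref{D_control} in the paper. But this deterministic bound does not close the argument. After sliding $K$ tracks into positions $0,\dots,K-1$, the graph $G^{(*)}$ agrees with $G_{\balpha,\bbeta}$ only up to level $K-1$, while the deterministic height bound on the image path is $N+K$. To conclude that the image is an open crossing \emph{of $G_{\balpha,\bbeta}$} you need the path's height to stay below $K$, and $N+K\le K$ is false for every $K$. Enlarging the block of $\beta$-tracks from $N+1$ to $CN$ does not help: the deterministic bound grows at the same rate as the target region. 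So your assertion that ``the image crossing is contained in a box $B(\lfloor\rho N\rfloor,CN)$ of $G_{\balpha,\bbeta}$'' does not follow; the image may have edges above level $N$, where $G^{(*)}$ carries $\bef$-angle tracks, not $\beta$-angle ones. This is the whole point of the probabilistic analysis in Section~\ref{sec:horizontal}: Lemmas~\ref{column_growth_control} and~\ref{growth_process_domination} show that the actual height drift is, with probability $1-\rho e^{-N}$, at most $\la N$ rather than $N+\la N$. That extra $N$ of savings, which your deterministic argument throws away, is precisely what makes the target box and the $\beta$-region match up.

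A secondary issue is the claim that vertical crossings follow ``by a symmetric construction.'' They do not: the paper's Proposition~\ref{vertical_transport} is not obtained by rotating the horizontal argument. There one tracks the \emph{decrease} of the highest attained height of a vertical crossing and compares against a biased random walk (Lemma~\ref{G-lem1}), controlling the probability of non-decrease at each step via a Harris--FKG argument on a leftmost-path event (inequality~\eqref{cond_non_decrease}). The asymmetry is inherent --- $\balpha$ and $\bbeta$ do not play interchangeable roles in the construction --- and the paper even remarks on this explicitly before Section~\ref{sec:horizontal}. Your proposal therefore needs, in both the horizontal and vertical cases, a genuinely probabilistic control on the drift; the deterministic gradient bound is necessary but nowhere near sufficient.
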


\begin{cor}\label{bxp_square}
Let $\eps>0$. There exists $\de=\de(\eps)>0$ such that every
isoradial square lattice satisfying \BAC$(\eps)$ 
has the \bxp\ \BXP$(\de)$. 
\end{cor}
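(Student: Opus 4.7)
The plan is to apply Proposition \ref{bxp_transport} twice --- once as stated and once via its natural dual obtained by $90^\circ$ rotation of the plane --- to reduce the \bxp\ on an arbitrary isoradial square lattice satisfying \BAC$(\eps)$ to the \bxp\ on the standard lattice $\ZZ^2$ at $p=\tfrac12$, which is classical by Russo--Seymour--Welsh.

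Given $G_{\balpha,\bbeta}$ satisfying \BAC$(\eps)$, I would fix any index $j_0$ and set $\bef := \b_{j_0}$. Since $\bef - \a_i = \b_{j_0} - \a_i \in [\eps,\pi-\eps]$ for every $i$, the pair $(\balpha,(\bef))$ also satisfies \BAC$(\eps)$, and Proposition \ref{bxp_transport} reduces the task to establishing \BXP$(\de'')$ for $G_{\balpha,\bef}$ with $\de'' = \de''(\eps) > 0$. Since \BAC$(\eps)$ forces $\eps \le \pi/2$, I then set $\bef' := \bef - \pi/2$, so that $((\bef'),(\bef))$ satisfies \BAC$(\eps)$. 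The proof of Proposition \ref{bxp_transport} will rest on the track-exchange operations of Section \ref{sec:xch} applied to adjacent horizontal tracks; running those same operations on vertical tracks (equivalently, applying the proposition to the $90^\circ$-rotated graph, in which horizontal and vertical tracks are swapped and the angle sequences shifted by $\pi/2$) yields a dual form of the proposition in which the varying sequence is $\balpha$ rather than $\bbeta$. This dual form further reduces the task to proving the \bxp\ for $G_{\bef',\bef}$.

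The graph $G_{\bef',\bef}$ with both sequences constant and $\bef - \bef' = \pi/2$ has every rhombus of $G^\di$ equal to a unit square, so $G^\di$ is isometric to $\ZZ^2$ and $G$ itself is isometric to the standard square lattice. By \eqref{G101}, every edge has probability $p_{\pi/2} = \tfrac12$, and \BXP$(\de_0)$ holds for a universal $\de_0 > 0$ by the Russo--Seymour--Welsh theorem \cite{Russo,Seymour-Welsh}. Chaining the two applications of Proposition \ref{bxp_transport} through $G_{\bef',\bef}$, $G_{\balpha,\bef}$, and $G_{\balpha,\bbeta}$ then yields the \bxp\ for $G_{\balpha,\bbeta}$ with $\de$ depending only on $\eps$.

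The main obstacle, beyond Proposition \ref{bxp_transport} itself, is justifying the dual form invoked above. This amounts to checking that the track-exchange construction is invariant under $90^\circ$ rotation of $\RR^2$, and that the dependence of constants on $\eps$ is preserved under this symmetry; both should be immediate from the symmetric setup of Section \ref{sec:xch}, but need to be articulated carefully to ensure that the output constant $\de(\eps)$ genuinely depends only on $\eps$ and not on the particular sequences $\balpha,\bbeta$.
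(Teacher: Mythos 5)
Your proof is correct and is essentially the paper's own argument: both apply Proposition \ref{bxp_transport} twice to chain $G_{\bef',\bef}\to G_{\balpha,\bef}\to G_{\balpha,\bbeta}$, using a rotation (reflection) of the plane to swap the roles of $\balpha$ and $\bbeta$ in one of the two applications, and starting from the Russo--Seymour--Welsh input for the homogeneous square lattice. The paper phrases the symmetry step as a ``without loss of generality, $\balpha$ is constant'' rather than spelling out the $90^\circ$ rotation, but the content is identical.
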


Since $\sG(\eps, 1)$ is  the set of isoradial square lattices satisfying \BAC$(\eps)$,
the corollary is equivalent to \eqref{G2231} with $I=1$.

\begin{proof}[Proof of Corollary \ref{bxp_square}]
  Let $\eps>0$ and let $G_{\balpha,\bbeta}$ satisfy \BAC$(\eps)$.
  
  First, assume that one of the two sequences $\balpha$, $\bbeta$ is constant. 
  Without loss of generality we may take $\balpha$ to be constant, 
  and by rotation of the graph, we shall assume $\balpha \equiv 0$. 
  There exists $\de>0$ such that 
  the homogeneous square lattice $G_{0,\pi/2}$ satisfies \BXP$(\de)$
  (see, for example, \cite[Sect.\ 1.7]{Grimmett_Percolation}).
  By Proposition \ref{bxp_transport} with $\bef = \frac12\pi$, 
  $G_{\balpha,\bbeta}$ satisfies \BXP$(\de')$ for some $\de'=\de'(\de,\eps)>0$.

  Consider now the case of general $\balpha$, $\bbeta$. 
  By the above, $G_{\balpha,\b_0}$ satisfies \BXP$(\de')$.
  By Proposition \ref{bxp_transport} with $\bef = \b_0$, 
  $G_{\balpha,\bbeta}$ satisfies \BXP$(\de'')$ for some $\de''=\de''(\de',\eps)>0$.
\end{proof}

By Lemma \ref{alternate_bxp}, Proposition \ref{bxp_transport} follows from 
the forthcoming Propositions 
\ref{horizontal_transport} and \ref{vertical_transport}, dealing \resp\ 
with horizontal and vertical crossings. 

The proofs are outlined in the remainder of this section. 
The basic idea is that 
pieces of $G_{\balpha, \bbeta}$ and $G_{\balpha, \bef}$ 
may be glued together along a horizontal track.
Track-exchanges may then be performed repeatedly
in order to swap parts of $G_{\balpha, \bef}$ and $G_{\balpha, \bbeta}$
while maintaining the existence of certain open paths.

Let $\balpha$, $\bbeta$, and $\bef$ be as in Proposition \ref{bxp_transport}.
Fix $N$, to be chosen separately in the two proofs, and define
\begin{equation*}
  \wt{\b}_j =
  \begin{cases}
    \bef &\text{ if } j < N , \\
    \b_{j-N} &\text{ if } j \geq N.
  \end{cases}
\end{equation*}
We refer to the part of $G = G_{\balpha, \wt\bbeta}$  above height $N$  as
the \emph{irregular block}, and that with height between $0$ and $N$
as the \emph{regular} block. The regular
block may be viewed as part of $G_{\balpha, \bef}$, 
and the irregular block as part of $G_{\balpha, \bbeta}$.
We will only be interested in the graph above height $0$.

In Section \ref{sec:horizontal}, we explain how to
transport \emph{horizontal box-crossings} from the regular block to the irregular block.
Consider an open horizontal crossing of a wide rectangle in the regular block of $G$. 
A sequence of track-exchanges is made from the top to the bottom of the regular block
in such a way that the regular block moves upwards. 
See Figure \ref{fig:horizontal_sigmas}.

The evolution of the open crossing is observed throughout the transformations. 
Its endpoints are pinned to the lowest track, and therefore
do not change. 
The open path may itself drift upwards, 
and the core of the proof of Proposition 
\ref{horizontal_transport} lies in a probabilistic control of its rate of drift.

\emph{Vertical box-crossings} are studied in Section \ref{sec:vertical}.
This time, the tracks of the regular block are moved upwards by a process of track-exchange, 
as illustrated in Figure \ref{fig:vertical_sigmas}. 
Consider an open vertical crossing of some rectangle of the regular block.
During the exchanges of tracks, the highest point of the path may drift downwards, 
and as before one needs some control on its rate of drift.
It is key that, after all tracks of the regular block have been moved upwards, 
the irregular block contains an open vertical crossing of some (wider and lower) rectangle. 
This is achieved by showing that the height of the upper endpoint of
the crossing decreases by at most one at each step, and does not decrease 
with some uniformly positive probability. 

The arguments and illustrations of Section \ref{sec:xch}
will be central to the proofs of Propositions \ref{horizontal_transport} and \ref{vertical_transport}. 

Separate considerations of  horizontal and vertical crossings are required 
  since $\balpha$ and $\bbeta$ do not play equal roles. 
  Consider the proof of  Corollary \ref{bxp_square}.
When passing from $G_{0,\pi/2}$ to $G_{\balpha,\bbeta}$ 
  with one of the sequences $\balpha$, $\bbeta$ constant, 
  both and horizontal and vertical box-crossings need be transported. 
 On the other hand,  Proposition \ref{horizontal_transport} suffices in the
second part of the proof, as explained 
in the following remark.  
  
\begin{rem}
  The material in Section \ref{sec:vertical}, 
  and specifically Proposition \ref{vertical_transport},
  may be circumvented by use of \cite[Thm 1.5]{GM1}, where
  the \bxp\ is proved for so-called `highly inhomogeneous' square lattices.
  We do not take this route here since it would reduce the integrity of the current
  proof, and would require the reader to be familiar with a different method 
  of applying the \stt\ to square (and triangular) lattices.
  
  Here is an outline of the alternative approach. 
  In the language of \cite{GM1}, an isoradial square
  lattice $G_{\balpha,\xi}$ satisfying \BAC$(\eps)$ is a highly inhomogeneous square
  lattice satisfying  \cite[eqn (1.5)]{GM1} 
  (with an adjusted value of $\eps$). 
  By \cite[Thm 1.5]{GM1}, such a lattice has the \bxp. 
  By Proposition \ref{horizontal_transport},
  horizontal box-crossings may be transported from $G_{\balpha,\xi}$
  to the more general isoradial square lattice $G_{\balpha, \bbeta}$.
  Similarly, by interchanging the roles of the
  horizontal and vertical tracks of $G_{\balpha,\bbeta}$, we 
  obtain the existence of vertical box-crossings in that lattice. 
  Such crossing probabilities are now combined, using Lemma \ref{grid_bxp},
  to obtain Theorem \ref{main}.
\end{rem}

The following is fixed for the rest of this section.
Let $\eps >0$, and 
let  $\balpha$, $\bbeta$ be sequences of angles  
satisfying \BAC$(\eps)$, \eqref{bounded_angles}.
Let $\bef$ be an angle such that $\balpha$ and $(\bef)$ 
satisfy \BAC$(\eps)$, \eqref{bounded_angles}.
All constants in this section may depend on $\eps$, but 
not further on $\balpha$, $\bbeta$, $\bef$ unless otherwise stated. 

\subsection{Horizontal crossings}\label{sec:horizontal}

\begin{prop}\label{horizontal_transport}
  There exist  $\la,N_0 \in \NN$, depending on $\eps$
  only, such that, 
  for $\rho \in\NN $ and $N \ge N_0$,
  \begin{align}
    &\PP_{\balpha, \bbeta}\bigl(\Ch [B( (\rho - 1) N, \la N )]\bigr) \nonumber\\  
    & \hskip1cm \ge (1-\rho e^{-N}) \PP_{\balpha, \bef}\bigl(\Ch[B(\rho N,N )]\bigr) \nonumber\\
    &\hskip2cm\times\PP_{\balpha, \bef}\bigl(\Cv[B (-\rho N, -(\rho-1)N;0,N )]\bigr)\nonumber\\
    &\hskip2cm\times\PP_{\balpha, \bef}\bigl(\Cv[B ((\rho-1) N, \rho N;0,N )]\bigr).
  \label{G14}\end{align}
\end{prop}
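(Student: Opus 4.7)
The plan is to use the track-exchange machinery of Section \ref{sec:xch} to transport a horizontal crossing from the homogeneous lattice $G_{\balpha, \bef}$ into the general lattice $G_{\balpha, \bbeta}$, coupled with a concentration estimate controlling the vertical drift of the transported crossing.

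Fix $N \in \NN$ and introduce the composite isoradial square lattice $G := G_{\balpha, \wt\bbeta}$, where $\wt\b_j = \bef$ for $0 \le j < N$ (the \emph{regular block}) and $\wt\b_j = \b_{j-N}$ for $j \ge N$ (the \emph{irregular block}). Under $\PP_G$, the configuration restricted to edges of the regular block has the same law as under $\PP_{\balpha, \bef}$. On $G$, let $H$, $V_L$, $V_R$ denote respectively the events of a horizontal crossing of $B(\rho N, N)$ and of vertical crossings of the two thin rectangles $B(-\rho N, -(\rho-1)N; 0, N)$ and $B((\rho-1)N, \rho N; 0, N)$. All three events are increasing and measurable with respect to edges of the regular block, so the Harris--FKG inequality gives
\[
\PP_G(H \cap V_L \cap V_R) \ge \PP_{\balpha, \bef}(H)\,\PP_{\balpha, \bef}(V_L)\,\PP_{\balpha, \bef}(V_R).
\]

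Next, apply to $G$ the composed track-exchange transformation $\Psi$ which successively slides the lowest irregular tracks downwards through the entire regular block. After a total of $\lambda N$ passes (for a constant $\lambda = \lambda(\eps)$ fixed below), the $\lambda N$ lowest tracks of $\Psi(G)$ coincide with the $\lambda N$ lowest tracks of $G_{\balpha, \bbeta}$, while the $N$ tracks of the regular block sit just above them. By iterated application of Proposition~\ref{prop:st-coupling}, if $\omega$ has law $\PP_G$ then $\Psi(\omega)$ has the canonical law on $\Psi(G)$, and any open path of $\omega$ is transported to an open path of $\Psi(\omega)$ provided that it is not broken by an edge-removal on the lateral boundary of the strip; this is guaranteed by choosing the strip width much larger than $\rho N$.

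The heart of the argument is to show that the transported horizontal crossing $\Psi(\gamma)$ does not drift too high. Inspection of Figures \ref{fig:path_transfo}--\ref{fig:path_transfo_ep} shows that, at each elementary \stt\ composing $\Psi$, the height of any given vertex of the current path can increase by at most $1$, and does so only with probability at most $1-c(\eps)$ for some $c(\eps) > 0$ uniform under \BAC$(\eps)$. Consequently the vertical displacement of any fixed vertex is stochastically dominated by a sum of independent Bernoulli variables, so a Chernoff estimate combined with a union bound over the $O(\rho N)$ vertices along $\gamma$ yields that, for $\lambda = \lambda(\eps)$ and $N \ge N_0(\eps)$ large enough, with probability at least $1-\rho e^{-N}$ the path $\Psi(\gamma)$ is contained in the strip of heights $[0, \lambda N]$. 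On this good event, $\Psi(\gamma)$, together with the images $\Psi(V_L)$ and $\Psi(V_R)$ of the two lateral vertical crossings (which still anchor the two ends of $\Psi(\gamma)$ to the side strips of width $N$), yields a horizontal crossing of $B((\rho-1)N, \lambda N)$ inside the copy of $G_{\balpha, \bbeta}$ forming the bottom of $\Psi(G)$, which gives \eqref{G14}.

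The main obstacle lies in the drift estimate: the successive updates of $\gamma$ under consecutive \stt s are not independent, so one must exhibit a dominating family of independent Bernoulli increments (with parameters uniformly bounded away from $1$ under \BAC$(\eps)$) and then combine this concentration with a carefully tuned union bound over the many vertices of the initial crossing to obtain the sharp $(1-\rho e^{-N})$ factor in \eqref{G14}.
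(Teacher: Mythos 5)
Your overall architecture matches the paper's: the composite lattice $G_{\balpha,\wt\bbeta}$ with a regular block below and an irregular block above, the Harris--FKG decomposition of the initial event into one horizontal and two vertical crossings (the paper packages these into an event $E_N$ asking for a single open path with both endpoints at level $0$, which is slightly cleaner since level $0$ is invariant under the track-exchanges), the cascade of track-exchanges $U_k$ moving the irregular tracks down through the regular block, and a drift estimate showing the transported path does not rise above level $\la N$ except with probability $\rho e^{-N}$. So the plan is correct.

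The gap is in the drift estimate, which is the technical heart. You claim that ``at each elementary \stt\ composing $\Psi$, the height of any given vertex of the current path can increase by at most 1, and does so only with probability at most $1-c(\eps)$,'' and then propose a Chernoff bound plus a union bound over $O(\rho N)$ vertices. This is not accurate: inspection of Figure \ref{fig:path_transfo} shows that the height of a column can increase \emph{deterministically} (probability $1$) when a neighbouring column is already one unit higher. If you set the Bernoulli parameter to $1-c(\eps)$ with $c(\eps)$ small, a naive sum of $\la N$ such Bernoullis has mean $\approx \la N$, which is useless. The paper's Lemma \ref{column_growth_control} resolves this by separating the two effects: the deterministic increase is captured by the ``covering'' $C(\bH^k)_n$ (the maximum of the mountains centred at the current range), which spreads the range laterally but \emph{does not raise its global maximum}, while only the genuinely random increase above the local mountain is charged to a Bernoulli $Y_n^k$ with parameter $\eta(\eps)<1$ strictly. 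Controlling the resulting dominating Markov chain $\bX^k$ then requires the growth-process estimate of Lemma \ref{growth_process_domination} (imported from \cite{GM1}), not a direct union bound over columns, because the location of the peak migrates under the covering dynamics. You also omit the lateral drift correction $d_k$ (compensating for the direction of each $U_k$), which is needed to make the growth process approximately symmetric; without it the domination by a centred process does not hold as stated. In short: right framework, but the claimed reduction to a sum of independent Bernoullis plus a union bound skips the covering/mountain machinery that the argument actually relies upon, and the bound ``probability at most $1-c(\eps)$'' conflates the deterministic and random growth mechanisms.
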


\begin{proof}

Fix  $\rho \in \NN$ with $\rho>1$, 
  and $\la, N_0\in\NN$  to be chosen later, and let $N\ge N_0$. 
  Recall the graph $G = G_{\balpha, \wt\bbeta}$  given in Section \ref{sec:outline}.
%
%
  We work on a vertical strip $\{v_{i,j}: -M \le i \le M\}$  
  of $G$ with width $2M$, where 
  \begin{equation}\label{G221}
    M =  (\rho+ 2\la + 1) N,
  \end{equation}
  and we truncate $\balpha$ to a finite sequence 
  $(\a_i: - M \leq i \leq M-1)$.
  
  We will work with graphs obtained from $G$ by a
  sequential application of the transformations $\Si_j$  of Section \ref{sec:xch}, 
  and to this end we let
  \begin{equation}\label{G222}
    U_k = \Sigma_{k} \circ \Si_{k+1} \circ \dots \circ \Sigma_{N + k - 1},\qquad k \ge 1. 
  \end{equation}
  Note that  $U_k$ moves the track at level $N+k-1$ to level $k-1$, while raising the tracks 
  at levels ${k-1},\dots,{N+k-2}$ by one level 
  each  (see Figure \ref{fig:horizontal_sigmas}).
  We propose to apply $U_1,U_2,\dots, U_{\la N}$ to $G$ in turn,
  thereby moving part of the irregular block beneath the regular block.
  
  \begin{figure}[htb]
    \begin{center}
      \includegraphics[width=0.9\textwidth]{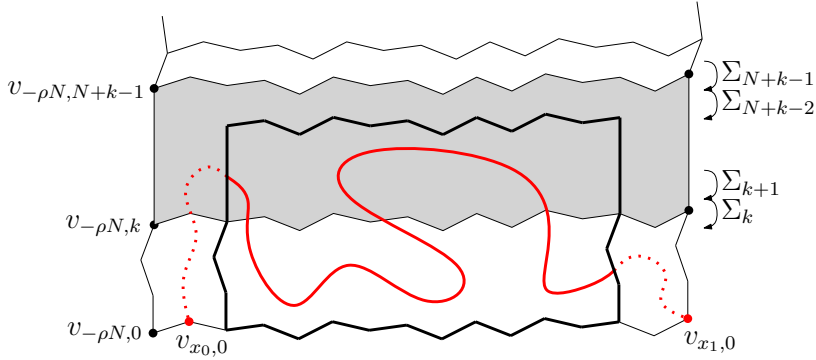}
    \end{center}
    \caption{The transformation $U_k$ raises the (shaded) regular block by one unit,
      and moves the track above by $N$ units downwards.}
    \label{fig:horizontal_sigmas}
  \end{figure}

  Let $E_N$ be the event that there exists an open path of $G$ within
  $B(\rho N, N)$, with endpoints 
  $v_{x_0, 0}$ and $v_{x_1, 0}$ for some 
  $x_0 \in [-\rho N,  - (\rho-1) N]$ and $x_1 \in [(\rho-1) N, \rho N]$.
  By the definition of $\wt{\bbeta}$,  $B(\rho N, N)$ is entirely contained 
  in the regular block of $G$. By the Harris--FKG inequality,
  \begin{align}
    \PP_{\balpha, \wt\bbeta}(E_N) 
    &\ge  \PP_{\balpha, \bef}\bigl(\Ch[B(\rho N,N )]\bigr) \nonumber\\
    &\hskip1cm\times\PP_{\balpha, \bef}\bigl(\Cv[B (-\rho N, -(\rho-1)N;0,N )]\bigr)\nonumber\\
    &\hskip1cm\times\PP_{\balpha, \bef}\bigl(\Cv[B ((\rho-1) N, \rho N;0,N )]\bigr).\label{G105}
  \end{align}
  
  Let $\omega^0$ be a configuration on $G$, 
  chosen according to $\PP_G$.
  For $k \in \NN$, let $G^0=G$ and
  \begin{align*}
    G^k  = U_{k} \circ \dots \circ U_1 (G),\quad
    \omega^k  = U_{k} \circ \dots \circ U_1 (\omega^0).
  \end{align*}
  The family 
  $(\omega^k : k \ge 0)$ is a sequence of configurations 
  on the $G^k$ with 
  associated law denoted $\PP$.
  Note that $\PP$ is given in terms
  of the law of $\om^0$, and of the randomizations contributing to the $U_i$.
  The marginal law of $\om^k$ under $\PP$ is $\PP_{G^k}$. 
  
  Let $\omega^0 \in E_N$, and let $\ga^0$ be  a path 
  in $B(\rho N, N)$ with endpoints 
  $v_{x_0, 0}$ and $v_{x_1, 0}$ for some 
  $x_0 \in [-\rho N,  - (\rho-1) N]$ and $x_1 \in [(\rho-1) N, \rho N]$. 
  Let $\ga^k = U_{k} \circ \dots \circ U_1 (\ga^0)$.
   The path evolves as we apply the $U_k$ sequentially,
  and most of this proof is directed at studying the sequence
  $\g^0,\g^1,\dots,\g^{\la N}$.
  
  First we show that the path is not broken by the track-exchanges.
 For $0 \leq k \leq \la N$, set
  \begin{align*}
    D^k = \bigl\{ v_{x,y} \in (G^k)^\di: |x| \le  (\rho + 1) N + 2k - y, 
    \ 0 \leq y \leq N+k \bigr\}.
  \end{align*}
  The proof of the following elementary lemma is summarised at the end of this section.

  \begin{lemma}\label{D_control}
    For $0 \leq k \leq \la N$, $\ga^k$ is an open path
    contained in $D^k$. 
  \end{lemma}
  
  The set $\{v_{x,0}: x\in \ZZ\}$ of vertices of $G^\di$ is invariant under the $U_k$, whence 
  the endpoints of the $\ga^k$ are constant for all $k$.  
  It follows that the horizontal span of $\ga^{\la N}$ is at least $2(\rho-1)N$. 
  
  If $\ga^{\la N}$ has maximal height not exceeding $\la N$, 
  then it contains a $\om^{\la N}$-open horizontal crossing of $B((\rho - 1)N, \la N)$.
  The graph $G^{\la N}$ agrees with $G_{\balpha, \bbeta}$ within $B((\rho - 1)N, \la N)$,
  so that
  \begin{equation*}
    \PP_{\balpha, \bbeta}\bigl(\Ch[B ((\rho - 1) N, \la N )]\bigr) \geq 
    \PP \bigl( h(\ga^{\la N}) \leq \la N \bigmid E_N \bigr)
    \PP(E_N) .
  \end{equation*}  
  By \eqref{G105}, it suffices to show the existence of 
  $\la, N_0\in \NN$ such that, 
  \begin{align}
    \PP \bigl( h(\ga^{\la N}) \leq \la N \bigmid \omega^0 \bigr)
    \geq 1- \rho e^{-N}, \qquad N \geq N_0,\ \omega^0 \in E_N, 
    \label{growth} 
  \end{align}
  and the rest of the proof is devoted to this. 
  The basic idea is similar to the corresponding step of \cite{GM1}, but the calculation
is more elaborate.
  
  Let $\omega^0 \in E_N$
  and let $\ga^0$ be as above. 
  We observe the evolution of the heights of the images of $\ga^0$ 
  within each column.
  For $n \in \ZZ$ and $0 \leq k \leq \la N$, set 
  \begin{align*}
    \col_n = \{v_{n,y} : y \in \ZZ \},\quad
    h_n^k = \begin{cases} h ( \ga^{k} \cap \col_n ) &\text{if } \ga^k \cap \col_n \ne \es,\\
      -\infty &\text{otherwise}.
    \end{cases}
  \end{align*}
  Thus, $h(\ga^{\la N}) = \sup \{h_n^{\la N}:n \in \ZZ\}$.
  
  The process $(h_n^k: n \in \ZZ)$, $k=0,1,\dots,\la N$,
  has some lateral drift depending on the directions of the track-exchanges $\Sigma_j$.
  We will modify it in order to relate it to the growth process of \cite{GM1}.
  The track above the regular block is transported by $U_{k}$ through the regular block, 
  and thus all $\Sigma_j$ contributing to $U_k$ are in the same direction 
  (either all move from right to left, or from left to right). 
  Let $(d_k :k \geq 0)$ be given by $d_0=0$ and 
  \begin{align*}
    d_{k+1} =
    \begin{cases}
      d_{k}+1 &\text{ if } \beta_k > \bef, \\
      d_{k}   &\text{ if } \beta_k = \bef, \\
      d_{k}-1 &\text{ if } \beta_k < \bef,
    \end{cases}
  \end{align*}
  and set $H_n^k = h_{n + d_k}^k$.
 The term $d_k$ is included to compensate for the 
  asymmetric lateral drift induced by the directions of the track-exchanges 
  (see Figure \ref{fig:path_transfo}).
  Thus $H_n^k$ has a roughly symmetric evolution as $n$ increases. 
  The rest of the proof is devoted to the process 
  $\bH^k = ( H_n^k : n \in \ZZ )$, $k =0,1,\dots, \la N$.

  We introduce some notation to be used in the proof.
  A sequence $\bR = (R_n:n \in \ZZ) \in (\ZZ\cup\{-\oo\})^\ZZ$ is termed a \emph{range}.
  The \emph{height in column} $n$ of $\bR$ is the value $R_n$,  and the  
  \emph{height} of the range is $\sup\{R_n: n \in \ZZ\}$.
  For two ranges $\bR^1$, $\bR^2$, we write $\bR^1 \leq \bR^2$
  if $R^1_n \leq R^2_n$ for  $n\in \ZZ$.
  The \emph{maximum} of a family of ranges is the pointwise supremum sequence.
  The range $\bR$ is called \emph{regular} if
  \begin{align}
    |R_{n+1}-R_n| \leq 1, \qquad  n\in\ZZ. \label{reg_mount} 
  \end{align}
  
  The \emph{mountain} at a point $(n,r)\in \ZZ^2$ is defined to be
  the range $\bM(n,r)=(M(n,r)_l : {l \in \ZZ})$ given by
  \begin{align*}
    M(n,r)_l =
    \begin{cases}
      r - |n - l| + 1 &\text{ for } l \neq n, \\
      r         &\text{ for } l = n. \\
    \end{cases}
  \end{align*}
  Note that mountains have flat tops of width $3$ centred at $(n,r)$, 
  and sides with gradient $\pm 1$.
  The \emph{covering} of a range $\bR$ is the range $C(\bR)$ 
  formed as the union of the mountains of each of its elements:
  $$ 
  C(\bR) = \max \{\bM(n,R_n) : n \in \ZZ\}.
  $$
  We note that $\bR \le C(\bR)$ with sometimes strict inclusion, and also
  that $\bR$ and $C(\bR)$ have the same height.
  If $\bR$ is regular,  the heights of $\bR$ and $C(\bR)$ in any given column
  differ by at most $1$.
  See Figure \ref{fig:growth_process} for an illustration of these notions
  and their role in the evolution of $\bH$.
  We return to the study of $(\bH^k)$. 
  
  \begin{figure}[htb]
    \begin{center}
      \includegraphics[width=1.0\textwidth]{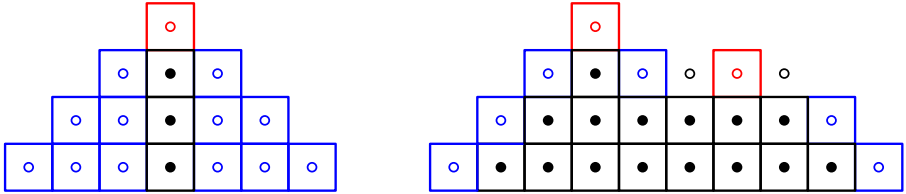}
    \end{center}
    \caption{ \emph{Left}: One step in the evolution of $\bH$. 
      The initial range $\bH^0$ has only one occupied column (black). 
      The blue/black squares form the mountain of the black column. 
      The red square is added at random. 
      \emph{Right}: One step in the evolution of the $\bX^k$ (or $\bH^k$ when regular). 
      The black squares are the configuration at step $k$,
      the blue squares are the additions at time $k+1$ due to the covering, 
      and the red squares are the random additions.}
    \label{fig:growth_process}
  \end{figure}
  
  \begin{lemma} \label{column_growth_control}
    There exists $\eta=\eta(\eps)\in(0,1)$, and 
    a family of independent Bernoulli random variables 
    $(Y_n^k :  n \in \ZZ,\  0 \leq k <\la N)$
    with common parameter $\eta$ 
    such that
    \begin{align}
      H_{n}^{k+1} \leq \max \{ C(\bH^{k})_n , H_{n}^{k} + Y_n^k \},
      \quad \ n \in \ZZ, \ 0 \leq k < \la N.  \label{H_evolution}
    \end{align}
  \end{lemma}
  
  The $(Y_n^k)$ are random variables used in the \stt s, and the probability
  space may be enlarged to accommodate these variables. 
  
  The proof of the lemma is deferred until later in the section. 
  Meanwhile, we continue the proof of \eqref{growth} 
  by following that of \cite[Prop.\ 3.7]{GM1}.
  Let $(Y_n^k)$ be as in Lemma \ref{column_growth_control},
  and let  $\bX^k:=(X_n^k: n \in \ZZ)$, $k= 0,1,\dots, \la N$, 
  be the Markov chain given as follows.
  \begin{letlist} 
  \item The initial value $\bX^0$ is the regular range given by
    $$
    X_n^0 = \begin{cases} 
      N &\text{for } n \in \left[- \rho N, \rho N \right],\\
      N + \rho N - |n| &\text{for }  n \notin \left[- \rho N, \rho N \right].\\
    \end{cases}
    $$ 
  \item For $k \ge 0$, conditional on $\bX^k$, the range $\bX^{k+1}$ is given by
    \begin{align}
      X_n^{k+1} = \max \{ X_{n-1}^k, X_{n}^k + Y_n^k , X_{n+1}^k \}, \qquad n \in \ZZ. \label{X_evolution}
    \end{align}
  \end{letlist}
  
  We show first, by induction,  that $\bX^k \ge \bH^k $ for all $k$. 
  It is immediate that $\bX^0$ is regular, and that $\bX^0\ge\bH^0$.
  Suppose that $\bX^k \geq \bH^k$. By \eqref{X_evolution}, each 
  range $\bX^k$ is regular  and
  \begin{align}
    \bX^{k+1} \geq C(\bX^k) \geq C(\bH^k). \label{XH1}
  \end{align}
  By \eqref{H_evolution},  $H^{k+1}_n > C(\bH^{k})_n$ only if  $Y^k_n = 1$. 
  Since $X^k_n \geq H^k_n$, we have in this case that 
  \begin{align}
    X^{k+1}_n \ge X^k_n + 1 \geq  H^{k}_n + 1 = H^{k+1}_n.
    \label{XH2}
  \end{align}
  By  \eqref{XH1}--\eqref{XH2}, $ \bX^{k+1} \geq \bH^{k+1}$, and the induction
  step is complete.

  The $\bX^k$ are controlled via the following lemma.

  \begin{lemma}[\cite{GM1}]\label{growth_process_domination}
    There exist $\la,N_0 \in \NN$, depending on $\eta$ only, 
    such that
    \[ 
   \PP \left( \max_n X_n^{\la N} \leq \la N \right) \geq  1 - \rho  e^{ - N}, \qquad
    \rho \in \NN,\ N \geq N_0.
    \]
  \end{lemma}
  
  \begin{proof}[Sketch proof]
    This follows that of \cite[Lemma 3.11]{GM1}. A small difference arises through the minor
    change of  the initial value $\bX^0$, but this is covered by the inclusion
    of smaller-order terms in \cite[eqn\ (3.31)]{GM1}.
  \end{proof}
  
  Let $\la$ and $N_0$ be given thus. For $N \geq N_0$ and $\om^0\in E_N$,
  \begin{align*}
    \PP \bigl( h(\g^{\la N}) \leq \la N \bigmid \omega^0  \bigr)
    &\geq \PP \left(\max_n X_n^{\la N} \leq \la N \right) \\
    &\geq 1- \rho e^{-N}. 
  \end{align*}
  This concludes the proof of Proposition \ref{horizontal_transport}.
\end{proof}

\begin{proof}[Proof of Lemma \ref{column_growth_control}]  
  Let $k \geq 0$ and let $\omega$ be a configuration on $G^k$.
  Let $\ga$ be an open path on $G^k$ that visits no vertex within
  distance $2$ of the sides of $G^k$ and with $h(\ga) \leq N + k$.
  We abuse notation slightly by defining $\bH^{k}$ and $\bH^{k+1}$ 
  as in the proof of Proposition \ref{horizontal_transport}
  with $\ga$ and $U_{k+1}(\ga)$ instead of $\ga^k$ and $\ga^{k+1}$, \resp. 
  That is,
  \begin{align*}
    H_n^k =  h ( \ga \cap \col_{n+ d_k}),\quad    
    H_n^{k+1} =  h ( U_{k+1}(\ga) \cap \col_{n+ d_{k+1}}), 
    \qquad  n \in \ZZ.
  \end{align*}
  We will prove that there exists a family of 
  independent Bernoulli random variables $(Y_n : n \in \ZZ)$, 
  independent of $\omega$, 
  with some common parameter $\eta = \eta(\eps) > 0$ 
  to be specified later, such that
  \begin{equation}
    H_{n}^{k+1} \leq \max \{ C(\bH^{k})_n, H_{n}^{k} + Y_{n}\},
    \qquad n \in \ZZ. \label{one_step_growth} 
  \end{equation}
  
  Once this is proved, the i.i.d.\ 
  family $(Y_n^k :  n \in \ZZ,\ 0 \leq k < \la N)$ may be constructed step by step, 
  by applying the above to the pair $\omega^k$, $\ga^k$ for $0 \leq k < \la N$.
  By Lemma \ref{D_control}, the assumptions on $\ga$ are indeed satisfied by each $\ga^k$. 
  By the independence of $(Y_n: n \in \ZZ)$ and $\omega$ above, 
  the family $(Y_n^k : n,k)$ satisfies the conditions of the lemma.

  It remains to prove \eqref{one_step_growth} for fixed $k$. 
  If $\b_k=\bef$, no track-exchange takes place, 
  hence $\bH^{k+1}=\bH^k$ and \eqref{one_step_growth} holds.
  Suppose $\b_k \neq \bef$. 
  Without loss of generality we may suppose $\b_k > \bef$, so that 
  $d_{k+1} = d_{k}+1$ and the track-exchanges in the application
  of $U:=U_{k+1}$ are all  from left to right. 
  To simplify  notation we shall assume $d_k = 0$.  
  
  Equation \eqref{one_step_growth} is proved in two steps. 
  First, we will show that
  \begin{align}
    H^{k+1}_{n} \leq \max \{ H^k_{n - i} - |i| + 1 : i \in \ZZ \}. \label{neighbours_invade}
  \end{align}
  This equation is a weaker version of  \eqref{one_step_growth}
  in which each $Y_n$ is replaced by $1$. 

  We prove \eqref{neighbours_invade} by analysing the individual track-exchanges 
  of which $U$ is composed.
  For $k \leq j \leq N+k$, let
  $ \Psi_j = \Sigma_{j+1} \circ \dots \circ \Sigma_{N+k}$. 
  Thus, $\Psi_{N+k}$ is the identity, 
  $\Psi_k  = U$, and $\Psi_{j-1} = \Sigma_{j} \comp \Psi_{j}$.
  Recall that the diamond graph is bipartite, with the primal and dual
  vertices as vertex-sets.
  A vertex $v_{n,r}$ is said to be contained in a range $\bR$ if $r \leq R_n$. 
  A set of vertices is contained in $\bR$ if every member is thus contained. 
  
 Let the sequence $(\bL^j : j =N+k,N+k-1,\dots, k)$ of ranges be defined recursively as follows.
  First, $\bL^{N+k} = \bH^k$.
  We obtain $\bL^{j-1}$ from $\bL^{j}$ by increasing its height in certain columns:
  for each primal vertex $v_{n,j}$ contained in $\bL^{j}$,
  the heights in columns $n+1$ and $n+2$ increase
  to $j+1$ and $j$, if not already at that height or greater. 
  
  We claim that  
  $\Psi_j(\ga)$ is contained in $\bL^{j}$ for  $N+k \geq j \geq k$, which is to say that
  \begin{align}
    h( \Psi_j(\ga) \cap \col_{n} ) \leq L^{j}_n, \qquad n \in \ZZ. \label{L_induction}
  \end{align}
  The above holds for $j=N+k$ by the definition of $\bL^{N+k}$,
  and we proceed by (decreasing) induction on $j$ as follows. 
  The path $\Psi_{j-1}(\ga)$ is obtained by applying $\Sigma_j$ to $\Psi_j(\ga)$,
  as illustrated in Figure \ref{fig:path_transfo}.
  Possible increases in column heights are marked in blue.
  Since the black vertices in Figure \ref{fig:path_transfo} 
  are contained in $\bL^{j}$, the blue ones are contained in $\bL^{j-1}$.
  This concludes the induction. 

  Therefore, $U(\ga) = \Psi_k(\ga)$ is contained in $\bL^{k}$, and hence
  inequality \eqref{neighbours_invade} follows once we have proved that 
  \begin{align} 
    L^{k}_{n+1} \leq 
    \max \{ H^k_{n-i} - |i| + 1 : i \geq -1 \}. 
    \label{cover3}
  \end{align}
  This we shall do by observing that the sequence $(\bL^{j})$ is, in a certain sense, 
  additive with respect to its initial state. 
  We think of $\bL^{N+k}$ as a union of 
  columns, each of whose evolutions may be followed individually. 
  
  \begin{figure}[htb]
    \begin{center}
      \cpsfrag{si}{}
      \cpsfrag{sii}{}
      \cpsfrag{siii}{}
      \includegraphics[width=1.0\textwidth]{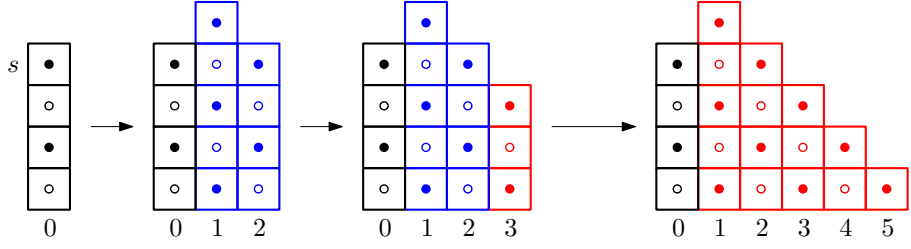}
    \end{center}
    \caption{An illustration of the sequence $\wt \bL(0,s)^j$ beginning with the
initial column $\wt \bL(0,s)^{N+k} = \De(0,s)$.
 This column is unchanged up to and including  $j=s$, and then it evolves as illustrated.}
    \label{fig:L_tilde}
  \end{figure}
  
  Let $r,s\in\ZZ$ be such that $s \le N+k$ and $r+s$ is even, so that $v_{r,s}$ is a primal vertex.
  Let $\De(r,s)$  be the range comprising a single column of height $s$ at position $r$.
  Consider the sequence $(\wt{\bL}(r,s)^{j})$ with the same dynamics as $(\bL^j)$ but with 
  initial state $\wt{\bL}(r,s)^{N+k} = \De(r,s)$.
  The evolution of $\wt{\bL}(r,s)^{j}$ is illustrated in Figure \ref{fig:L_tilde}. 
  We have that $\wt \bL(r,s)^j = \De(r,s)$ for $N+k \geq j \geq s$
  and, for $s > j \geq k$,
  \begin{equation*}
     \wt{L}(r,s)_m^j = 
    \begin{cases}
      - \infty       \quad & \text{if } m < r \text{ or } m > r + s  - j + 1,\\
      s             \quad & \text{if } m = r,\\
      s- (m - r) + 2 \quad & \text{if } r < m \leq r+s  - j + 1.
    \end{cases}
  \end{equation*}
  The range $\bL^j$ is obtained by combining the contributions of the columns of  $\bH^k$, 
  in that
  \begin{equation}
    \bL^j = \max \{ \wt{\bL}(r,H^k_r)^j : r \in \ZZ \}, 
\qquad N+k \geq j \geq k. \label{col_contrib}
  \end{equation}
  A rearrangement of the above with $j=k$ implies \eqref{cover3};
  \eqref{neighbours_invade} follows 
  by extending the maximum in \eqref{cover3} over $i \in \ZZ$.
    
  Let $n \in \ZZ$ be such that
  \begin{align} 
    H^k_{n} + 1 \leq \max \bigl\{ H^k_{n - i} - |i| + 1 : i \in \ZZ\setminus \{0\} \bigl\}. 
    \label{first_case}
  \end{align}
  Then \eqref{neighbours_invade} implies $H^{k+1}_{n} \leq C(\bH^k)_n $,
  whence \eqref{one_step_growth} holds for this particular value of $n$.
  
  It remains to prove \eqref{one_step_growth} when \eqref{first_case} fails.
  Assume $n$ does not satisfy \eqref{first_case}, 
  so that \eqref{neighbours_invade}
  implies $H_n^{k+1} \le H_n^k +1$.
  We shall prove that
  \begin{align}
    H^{k+1}_{n} \leq H^k_{n} + Y_n, \label{sc_domin}
  \end{align}
  where the $Y_n$ are independent Bernoulli random variables 
  with respective parameters 
 \begin{equation}\label{G707}
  \eta_k(n) := 
  \frac{ p_{\pi-\bef + \alpha_n}p_{\pi-\b_k+ \bef} }  { p_{\bef - \alpha_n} p_{\b_k- \bef} } ,
  \end{equation}
  (with $p_\th$  given in \eqref{G101}),  and which are independent of $\omega$.
  
  Let $l = H^k_{n}$. 
  We first analyse the action of 
  $ \Psi_l = \Sigma_{l+1} \circ \dots \circ \Sigma_{N+k}$, 
  and then that of $\Sigma_l$. 

  The vertex $v_{n, l}$ is necessarily primal. 
  Since \eqref{first_case} fails,
  \begin{align*}
    H_{n-i}^k \leq l + |i| - 1 , \qquad i \in \ZZ\setminus \{0\}. 
  \end{align*}
  Since each $v_{n-i, l + |i| - 1}$ is a dual vertex, 
  we have the strengthened inequality
  \begin{align}
    H_{n-i}^k \leq l + |i| - 2 , \qquad i \in \ZZ\setminus \{0\}. \label{second_case2}
  \end{align}
  See Figure \ref{fig:second_case} for an illustration of the 
  environment around $v_{n,l}$.
  
  \begin{figure}[htb]
    \begin{center}
      \cpsfrag{c}{$\col_n$}
      \cpsfrag{cn}{$\col_{n+1}$}
      \includegraphics[width=0.35\textwidth]{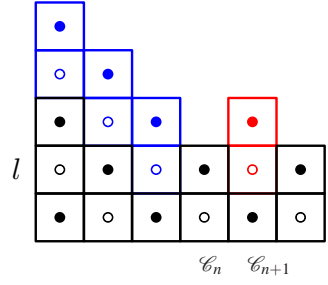}
    \end{center}
    \caption{The environment around $v_{n,l}$.
      By \eqref{second_case2}, the black blocks contain $\bH^k$. 
      The range $\bL^l$, and hence the path $\Psi_l(\ga)$,
      is contained in the aggregate range shown.
      The height in $\col_{n+1}$ increases  only if
      the red block appears when applying $\Sigma_l$.}
    \label{fig:second_case}
  \end{figure}

  By \eqref{L_induction}, and 
  \eqref{second_case2} substituted into \eqref{col_contrib},
  \begin{align}
    h(\Psi_l(\ga)\cap \col_{n-i})\leq L^l_{n-i} & \leq l + i, \qquad i \geq -1.
    \label{second_case3}
  \end{align}
  Note that $\Sigma_l$ is the final track-exchange with the potential to
  add vertices to the path at height $l+1$.
  Hence, $H_n^{k+1} = l + 1$ only if $v_{n+1,l+1}$ is 
contained in $\Psi_{l-1}(\ga)$, or, equivalently,
 only if the height in $\col_{n+1}$ increases to $l+1$ 
  when applying $\Sigma_l$ to $\Psi_l(\ga)$.

  By \eqref{second_case3} with $i = 0,1$, 
  the only cases in which this may happen are 
  those of the third and sixth lines of Figure \ref{fig:path_transfo}
  (with $v_{n,l}$  the black vertex).
  (See Figure \ref{fig:sandpile} for a more detailed illustration of the third case.)
  Moreover, the height in $\col_{n+1}$ increases only if the secondary outcome occurs. 
  In both cases, the secondary outcome occurs with probability $\eta_k(n)$
  if the edge $e = \lan v_{n, l}, v_{n+1, l+1}\ran$ is open, 
  and does not occur if $e$ is closed. 
  We therefore provide ourselves with a Bernoulli random variable
  $Y_n$, with parameter $\eta_k(n)$, for use in the former situation.
  We have that $H^{k+1}_{n} = H^k_{n} + 1$ only if $Y_n = 1$,
  and  \eqref{sc_domin} follows. 

  \begin{figure}[htb]
    \begin{center}
      \cpsfrag{cn}{$\col_n$}
      \cpsfrag{cni}{$\col_{n+1}$}
      \cpsfrag{a}{$\b_k -\a_n$}
      \cpsfrag{bf}{$\bef - \a_n$}
      \includegraphics[width=0.8\textwidth]{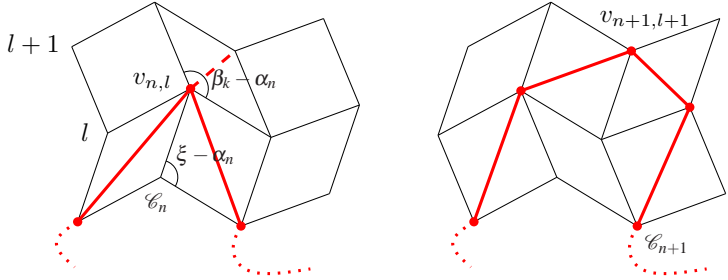}
    \end{center}
    \caption{The third case of Figure \ref{fig:path_transfo}. 
      If the dashed edge in the initial configuration (left) is open
      then, with probability $\eta_k(n)$,
      the resulting configuration is that on the right side.}
    \label{fig:sandpile}
  \end{figure}

  Let $A=\xi-\a_n$ and $B=\b_k-\xi$.  By \eqref{G707},
  \begin{align*}
    \eta_k(n) &= \frac{p_{\pi-A}p_{\pi-B}}{p_A p_B}
    = \frac{\sin(\frac13A)\sin(\frac13 B)}{\sin(\frac13[\pi-A]) \sin(\frac13[\pi-B])}\\
    &= \frac{\cos(\frac13[A-B]) - \cos(\frac13[A+B])}
    {\cos(\frac13[A-B]) - \cos(\frac13[2\pi-A-B])} =: g(A,B).
  \end{align*}
  By assumption, $B>0$, and so by \eqref{bounded_angles},
  \begin{equation}\label{G1234}
\eps\le A\le A+B \le \pi-\eps.
\end{equation}
 There exists $c(\eps)>0$ such that, subject to \eqref{G1234},
  \begin{equation*}
    \cos(\tfrac13[A-B]) \geq \cos(\tfrac13[A + B]) \geq \cos(\tfrac13[2\pi-A-B]) + c(\eps).
  \end{equation*}
  Therefore,
  $$
  \eta:= \sup\bigl\{g(A,B): \eps\le A \le A+B \le\pi-\eps\bigr\}
  $$
  satisfies $\eta<1$, and this concludes the proof of the lemma.
\end{proof}

\begin{proof}[Proof of Lemma \ref{D_control}]  
  We sketch this. 
  Since $B(\rho N, N) \subseteq D^0$, we have that $\ga^0 \subseteq D^0$. 
  It suffices to show that, for  $0 \leq k < \la N$ and 
  $\ga$ an open path in $D^k$, 
  $U_k$ does not break $\ga$ and $U_k(\ga) \subseteq D^{k+1}$.
  
  By considering the individual track-exchanges of which $U_k$ is composed,
  it may be seen that $\Psi_j(\ga)$ is an open path contained in $D^{k+1}$ for all $j$ 
  (with $\Psi_j=\Si_{j+1} \circ \Psi_{j+1}$ as  in the last proof).
  In considering how $\Psi_{j}(\g)$ is obtained from $\Psi_{j+1}(\ga)$,
  it is useful to inspect  the different cases of Figure \ref{fig:path_transfo}, 
  and in particular those involving blue points. 
  The path may be displaced laterally and, during the 
  sequential application of track-exchanges, 
  the drift may be extended laterally as it is propagated downwards. 
 The shapes of the $D^{i}$ have been chosen in such a way that 
 $\Psi_j(\ga)$ is contained in $D^{k+1}$ for all $j$.
  The argument is valid regardless of the direction of $\Sigma_j$. 
\end{proof}

\subsection{Vertical crossings}\label{sec:vertical}

\begin{prop}\label{vertical_transport}
  Let $\delta = \frac{1}{2} p_{\pi-\eps}^4 \in (0,\tfrac12)$.
  There exists $c_N =c_N(\de) >0$ satisfying $c_N\to 1$ as $N \to \infty$ 
  such that
  \begin{align*}
    \PP_{\balpha, \bbeta}\bigl(\Cv[B( 4N ,\delta N)]\bigr)  \geq 
    c_N \PP_{\balpha, \bef} 
    \bigl(\Cv[B (N, N)]\bigr), \qquad N\in\NN.
  \end{align*}
\end{prop}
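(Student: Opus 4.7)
The strategy mirrors that of Proposition \ref{horizontal_transport}, with horizontal and vertical directions interchanged and a direct concentration argument replacing the growth-process analysis of the horizontal case. I would work on a vertical strip of $G = G_{\balpha, \wt\bbeta}$, where $\wt\bbeta$ is the mixed sequence of Section \ref{sec:outline} and the width is chosen large enough to accommodate lateral drift. Under the canonical measure, the regular block at levels $0$ to $N$ is an isomorphic copy of the corresponding block in $G_{\balpha, \bef}$, so with probability $\PP_{\balpha, \bef}(\Cv[B(N, N)])$ it carries an open vertical crossing $\g^0$ of $B(N, N)$. I then apply the track-exchanges $U_1, \dots, U_N$ of \eqref{G222} to produce graphs $G^k$, configurations $\om^k = U_k \circ \cdots \circ U_1(\om^0)$, and image paths $\g^k = U_k \circ \cdots \circ U_1(\g^0)$. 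After all $N$ steps, the bottom $N$ levels of $G^N$ are occupied by the tracks of $\bbeta$, so the lower portion of $G^N$ coincides with $G_{\balpha, \bbeta}$; a boundary-avoidance argument analogous to Lemma \ref{D_control} ensures that the $\g^k$ remain open and do not reach the lateral boundary.

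The core is a one-step estimate on $h_k := h(\g^k)$: namely, $h_k \ge h_{k-1} - 1$ deterministically, and $\PP(h_k \ge h_{k-1} \mid \om^{k-1}) \ge p_{\pi-\eps}^4$. The deterministic bound follows from the fact that $U_k$ slides a single irregular track from level $N+k-1$ down to level $k-1$; a case-by-case inspection of Figures \ref{fig:path_transfo}--\ref{fig:path_transfo_ep} shows that, as the moving track passes through the level of the topmost vertex of $\g^{k-1}$, that vertex can drop by at most one in total, and then only during the unique $\Sigma_j$ whose index coincides with its height. For the probabilistic bound I would identify, in the same figures, the local outcomes that preserve the top vertex: they occur with a conditional probability expressible as a product of four edge-probabilities of the form $p_\theta$ or $1 - p_\theta = p_{\pi - \theta}$ with $\theta \in [\eps, \pi-\eps]$. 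Since $p_\theta$ is monotone decreasing in $\theta$, each such factor is at least $p_{\pi-\eps}$ by \eqref{G226}, giving the stated bound.

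With these two facts in hand, the number of strict descents of $h$ over $N$ steps is stochastically dominated by $\mathrm{Binomial}(N, 1 - p_{\pi-\eps}^4)$, whose mean is at most $N(1 - 2\delta)$. Hoeffding's inequality then yields
\begin{equation*}
\PP\bigl(h_N \ge \delta N\bigr) \ge 1 - \exp\bigl(- 2 N \delta^2\bigr) =: c_N \to 1.
\end{equation*}
A parallel bound on lateral drift, exploiting that each $U_k$ displaces any given vertex of the path by at most a bounded number of columns (again read off from Figure \ref{fig:path_transfo}), places the subpath of $\g^N$ joining level $0$ to level $\delta N$ inside $B(4N, \delta N)$; the widening by a factor of $4$ gives generous margin. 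The main obstacle I anticipate is the careful verification of the two one-step bounds, which requires a patient case analysis of Figure \ref{fig:path_transfo} paralleling, but not identical to, the argument underlying Lemma \ref{column_growth_control}; the rest of the proof is then routine concentration.
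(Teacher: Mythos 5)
The proposal founders on a wrong choice of track-exchange sequence, which breaks the crucial one-step descent bound. You apply $U_1,\dots,U_N$ from \eqref{G222}, where $U_k=\Si_k\circ\Si_{k+1}\circ\dots\circ\Si_{N+k-1}$ (designed for the horizontal-crossing proof). Read right-to-left, the constituent track-exchanges are applied in \emph{decreasing} order of level: $\Si_{N+k-1}$, then $\Si_{N+k-2}$, and so on down to $\Si_k$. The paper's vertical proof instead uses $V_k = \Si_{2N-k}\circ\dots\circ\Si_{N-k+1}$ from \eqref{G223}, whose constituent exchanges are applied in \emph{increasing} order of level. This directional difference is decisive.

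From Figure \ref{fig:path_transfo_ep} (and as made precise in the paper's proof of Lemma \ref{G-lem1}, inequalities \eqref{non_dec0}--\eqref{non_dec1}), the application of $\Si_{j}$ can decrease the height of the top endpoint of the crossing only when that endpoint sits exactly at height $j$, and then it drops by at most $1$. With the paper's upward sweep, once $\Si_h$ drops the top endpoint from $h$ to $h-1$, the subsequent exchanges $\Si_{h+1},\Si_{h+2},\dots$ act at strictly higher levels and never touch it again, so the total drop per step is at most $1$. With your downward sweep, the situation reverses catastrophically: after $\Si_h$ drops the top endpoint to height $h-1$, the very next exchange $\Si_{h-1}$ acts precisely at height $h-1$ and can drop it to $h-2$, and so on. The claim that the drop happens ``only during the unique $\Sigma_j$ whose index coincides with its height'' is exactly what fails here, because the relevant height changes during the sweep and stays in lockstep with the decreasing $j$. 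The number of drops within a single $U_k$ is thus not bounded by $1$; it behaves like a truncated geometric random variable with failure probability roughly $1-2\delta$, and for $\delta = \tfrac12 p_{\pi-\eps}^4$ small (which it is for small $\eps$), the expected drop per step exceeds $1$, so the height collapses to $0$ well before reaching $G^N$. Both the deterministic bound $h_k \geq h_{k-1}-1$ and the Binomial domination built on it are therefore false, and the Hoeffding argument does not start.

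There is a secondary, more minor gap: even granting a valid one-step bound, the stochastic domination of the descent process by an i.i.d.\ sequence is not immediate because, as the paper explicitly notes, the height process is not Markov. The paper addresses this via a leftmost-path conditioning argument with FKG and a Markov-chain coupling (Lemma \ref{G-lem1} and equations \eqref{G225}--\eqref{cond_non_decrease}); your proposal asserts the domination without engaging with this. The remaining ingredients you list (boundary avoidance \`a la Lemma \ref{D_control}, lateral-drift control, width margin) are in line with the paper, and the use of Hoeffding rather than the law of large numbers is a harmless strengthening once the domination is in place. To repair the argument, replace $U_k$ by the paper's $V_k$, sweeping the constituent $\Si_j$ upward, and supply the conditioning argument; the Binomial/Hoeffding conclusion then goes through as you outline.
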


\begin{proof}
  The notation of Section \ref{sec:outline} will be used.
  Let $N \in \NN$, and, as in Section \ref{sec:horizontal},
we will work with a vertical strip of width $2M$
  of the graph $G = G_{\balpha, \wt\bbeta}$.
  %
  For this proof we take $M = 5N$.
  For $k \in \{0,1, \dots, N -1\}$, set
  \begin{equation}\label{G223}
    V_k =  \Sigma_{2N - k} \circ \dots \circ \Sigma_{N - k + 1}. 
  \end{equation}
  The map  $V_k$ exchanges the track at level $N - k$ with the $N$ tracks immediately above it. 
  The sequential action of $V_0, V_1,\dots, V_{N-1}$ moves the regular block upwards track by track,
  see Figure \ref{fig:vertical_sigmas}. 

  \begin{figure}[htb]
    \begin{center}
      \includegraphics[width=0.8\textwidth]{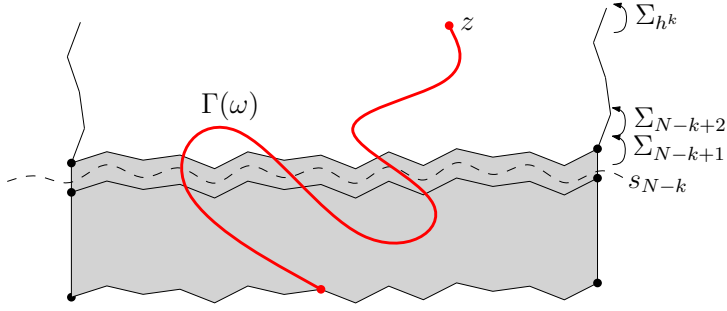}
    \end{center}
    \caption{The transformation $V_k$ moves 
      $s_{N-k}$  upwards by $N$ units.}
    \label{fig:vertical_sigmas}
  \end{figure}
  
  Let $\omega^0$ be a configuration on $G^0:= G_{\balpha, \wt{\bbeta}}$
  chosen according to its canonical measure $\PP_{\balpha, \wt\bbeta}$,
  and let
  \begin{align*}
    G^k &= V_{k-1}\circ \dots \circ V_{0}(G_{\balpha, \wt{\bbeta}}) , \\
    \omega^k &= V_{k-1}\circ \dots \circ V_{0}(\omega^0) ,\\
    D^k &= \bigl\{ v_{x,y} \in (G^k)^\di: |x| \le  N +2k +y,
    \  0 \leq y \leq 2N \bigr\} ,\\
    h^k &= \sup \bigl\{ h \leq N: \exists x_1, x_2 \in \ZZ \text{ with } 
    v_{x_1,0} \xlra{D^k, \om^k} v_{x_2,h} \bigr\} .
  \end{align*}
  That is, $h^k$ is the greatest height
  of an open path of $G^k$ starting in $\{v_{x,0}: x \in \ZZ\}$ 
  and lying in the trapezium $D^k$.
  The law $\PP$ of the sequence  $(\omega^k: k \in \ZZO)$
  is a combination of the law of $\omega^0$
  with those of the \stt s comprising the $V_k$. 
  
  The box $B(N, N)$ is contained in $D^0$,
  and lies entirely  in the regular block of $G^0$. 
  The box $B(4N ,\delta N)$ contains 
  the part of $D^N$ between heights $0$ and $\de N$, 
  and lies in the irregular section of $G^N$
  ($\de<\frac12$ is given in the proposition). 
  Therefore, it suffices to prove the existence of $c_N  =c_N(\de) > 0$ such that $c_N \to 1$ and
  \begin{equation}\label{G1}
    \PP ( h^N \geq \delta N ) \geq  c_N \PP ( h^0 \geq  N ). 
  \end{equation}
  The remainder of this section is devoted to  the proof of \eqref{G1}.

Let $(\De_i: i \in \NN)$ be independent random variables 
with common distribution
\begin{equation}\label{G6014}
P(\De= 0) = 2\de,\quad P(\De=-1)=1-2\de.
\end{equation}
The $\De_i$ are independent of all random  variables used in the construction
of the percolation processes of this paper. 
We set
\begin{equation} \label{G6013}
H^k= H^0 + \sum_{i=1}^k \De_i,
\end{equation}
where $H^0$ is an independent copy of $h^0$, independent of the $\De_i$.
The inequalities $\lest$, $\gest$ refer to stochastic ordering.

 \begin{lemma}\label{G-lem1}
    Let $0 \le k < N$.
  If $h^k \gest H^k$ then $h^{k+1} \gest H^{k+1}$.
\end{lemma}
  
Inequality \eqref{G1} is deduced as follows.
Evidently, $h^0 \gest H^0$ and, by Lemma \ref{G-lem1},
$h^N \gest H^N$.
In particular,
 $$
  \PP ( h^N \geq  \delta N )  \geq P( H^N \geq  \delta N) .
  $$ 
  Since  $h^0$ and $H^0$ have the same distribution,
  \begin{align*}
    \frac{\PP(h^N \geq \delta N)}{\PP(h^0 \geq N)} &\ge
    \frac{P( H^N \geq  \delta N)}{P(H^0 \geq N)}\\
    &\geq P(H^N \geq \delta N \mid H^0 \geq N)
    =: c_N(\delta).
  \end{align*}
  Now, $(H^k)$ is a random walk with mean step-size $ 2 \delta -1$.
  By the law of large numbers,  $c_N \to 1$ as $N\to\oo$.
  In addition, $c_N >0$, and \eqref{G1} follows.
\end{proof}

\begin{proof}[Proof of Lemma \ref{G-lem1}]
  Let $0\le k < N$.  
  We apply $V_k$ to $G^k$,
  and study the effects of the track-exchanges in $V_k$. 
  For $N-k \leq j \leq 2N-k$,
  let  $\Psi_j = \Sigma_{j} \circ \dots \circ \Sigma_{N - k +1}$,
  and let
  $D^k_j$ be the subgraph of $\Psi_j(G^k)^\di$
  induced by vertices $v_{x,y}$ 
  with $0 \leq y \leq 2N$ and
  \begin{equation}
    |x| \leq
    \begin{cases}
      N +2k +y +2 &\text{ if } y \leq j, \\
      N +2k +y +1 &\text { if } y = j+1, \\
      N +2k +y    &\text{ if } y > j+1.
    \end{cases}
  \end{equation}
  The $D^k_j$ increase with $j$, and $D^k \subseteq D_{N-k}^k$,
  $D_{2N-k}^k \subseteq D^{k+1}$.
  
  Let $\om^k_j = \Psi_{j}(\om^k)$ and
  \begin{equation*}
    h^k_j = \sup \bigl\{ h \leq N: \exists x_1, x_2 \in \ZZ \text{ with }  
    v_{x_1,0} \xlra{D^k_j, \om^k_j} v_{x_2,h} \bigr\},
  \end{equation*}
  noting that
  \begin{equation}\label{G6011}
    h^k \le h^k_{N-k}, \quad h^{k+1} \ge h^k_{2N-k}.
  \end{equation}
  First, we prove that, 
  for $N-k \leq j < 2N-k$,
  \begin{alignat}{2}
    h^{k}_{j+1} &\geq h^k_j - 1, \label{non_dec0} \\
    h^{k}_{j+1} &\geq h^k_j  \quad&&\text{if } h_j^k \ne j+1, \label{non_dec1} \\
    \PP (h^{k}_{j+1} \geq h \mid h^k_j =h) &\geq 2\de \quad&&\text{if } h=j+1. \label{non_dec2}
  \end{alignat} 
  
  Fix $j$ such that $N-k \leq j < 2N-k$.
  Let $\g$ be an $\om^k_j$-open  path of $\Psi_j(G^k)$, lying  in $D^k_j$, 
  with one endpoint at height $0$ and the other at height $h^k_j$. 
  
  By consideration of Figure \ref{fig:path_transfo},
  $\Si_{j+1}(\ga)$ is a $\om^k_{j+1}$-open path contained in $D^k_{j+1}$.
  The lower endpoint of $\ga$ is not affected by $\Si_{j+1}$.
  The upper endpoint is affected only if it is at height $j+1$,
  in which case its height decreases by at most $1$ (see Figure \ref{fig:path_transfo_ep}).
  This proves \eqref{non_dec0} and  \eqref{non_dec1}, and 
  we turn to  \eqref{non_dec2}.

  Let $\sP_j$ be the set of  paths $\g$ of
  $\Psi_{j}(G^k)$, contained in $D^k_j$,
  with one endpoint at height $0$, the other endpoint at height $h(\g)$ 
  and all other vertices with heights between $1$ and $h(\g)-1$.     

       
  We perform a preliminary computation.
  Let  $\ga,\ga' \in \sP_j$. 
  We write $\gamma' < \gamma$ if $\gamma' \ne \gamma$, $h(\ga')=h(\ga)$, and
  $\gamma'$ contains no edge strictly to the right of $\gamma$
  within $\{v_{x,y}: |x| \le M,\ 0\le y\le h(\ga)\}$.     
  Note that 
  $$
  h_j^k = \sup\bigl\{h(\ga): \gamma \in \sP_j,\, \gamma \text{ is $\om^k_j$-open}\bigr\},
  $$
  and denote by $\Ga=\Ga(\om^k_j)$ the $\om^k_j$-open
  path of $\sP_j$ that is the minimal element of 
  $\{\ga\in\sP_j: h(\ga) = h_j^k,\ \ga \text{ is $\om^k_j$-open}\}$
  with respect to the  order $<$.
  Thus, $\Ga$ is the leftmost path of $\sP_j$ reaching height $h_j^k$.
    
  We have that
  \begin{equation}\label{G225}
    \{\Ga(\om^k_j) = \gamma\} = \{\gamma \text{ is $\om^k_j$-open}\}
    \cap N_\ga, \qquad \g \in \sP_j,
  \end{equation}
  where $N_\ga$ is the decreasing event that:
  \begin{letlist}
  \item there is no  $\ga' \in \sP_j$ 
    with $h(\ga')>h(\ga)$, all of whose edges 
    not belonging to $\ga$ are $\om^k_j$-open,
  \item there is no $\gamma' < \ga$ with $h(\ga')=h(\ga)$,
    all of whose edges not belonging to $\gamma$ are $\om^k_j$-open.
  \end{letlist}
  Note that $N_\ga$ is independent of the event $\{\gamma \text{ is $\om^k_j$-open}\}$.

  Let $F$ be a set of edges of $\Psi_j(G^k)$,
  disjoint from $\ga$, and let $C_F$ be the event that every edge in $F$ is $\om^k_j$\emph{-closed}. 
  Let $\PP^k_j$ denote
  the marginal law of $\om^k_j$, and  $p_e$ the
  edge-probability of the edge $e$ of $\Psi_j(G^k)$. 
   By \eqref{G225} and the Harris--FKG inequality,
  \begin{align}
    \PP(C_F \mid \Ga = \ga) &= \frac{\PP^k_j(\Ga=\ga \mid C_F)}{\PP^k_j(\Ga=\ga)}
    {\PP^k_j(C_F)}\nonumber\\
    &= \frac{\PP^k_j(N_\ga \mid C_F)}{\PP^k_j(N_\ga)} \PP^k_j(C_F) \nonumber\\
    &\ge \PP^k_j(C_F) = \prod_{f\in F} (1-p_f),\label{G224}
  \end{align}
  where we have
  extended the domain of $\PP$ to include the intermediate subsequence of 
  $\om^k = \om_{N-k}^k,\om_{N-k+1}^k,\dots,\om_{2N-k}^k=\om^{k+1}$. 
  
  Let $\ga\in\sP_j$ with $h(\ga) = j+1$ and suppose $\Ga(\om^k_j) = \gamma$. 
  Without loss of generality, we may suppose  that 
  $\Sigma_{j+1}$, applied to $\Psi_j(G^k)$, 
  goes from left to right; a similar argument holds otherwise. 
    
  Let $z = v_{x, j+1}$ denote the upper endpoint of $\gamma$ 
  and let $z'$ denote the other endpoint of the unique edge of 
  $\gamma$ leading to $z$. 
  Either $z' = v_{x+1, j}$ or $z' = v_{x-1, j}$. 
  In the second case, it is automatic as in Figure \ref{fig:path_transfo_ep}
  that $h(\Si_{j+1}(\ga)) \geq j+1$.
    
  Assume that $z'=v_{x+1,j}$, as illustrated in
  Figure \ref{fig:chance_of_non_decrease}. Let $F=\{e_1,e_2,e_3,e_4\}$
  where  
  \begin{alignat*} {2}
    e_1 &= \lan v_{x,j+1},v_{x-1,j+2}\ran, &\quad  e_2 &= \lan v_{x-1,j+2},v_{x-2,j+1}\ran, \\
    e_3 &= \lan v_{x-2,j+1},v_{x-1,j}\ran, &\quad    e_4 &= \lan v_{x-1,j},v_{x,j+1}\ran,
  \end{alignat*}
  are the edges of the face of $\Psi_{j}(G^k)$ to the left of $z$. 
  By definition of $\sP_j$, $F$ is  disjoint from $\ga$. 
  By studying the three relevant \stt s contributing to $\Sigma_{j+1}$
  as illustrated in Figure \ref{fig:chance_of_non_decrease}, 
  we find as in Figure \ref{fig:simple_transformation_coupling} that
  \begin{align*}
    \PP\bigl(h(\Si_{j+1}(\ga)) \ge j+1 \bigmid \Ga=\ga \bigr) &\geq
    \frac{p_{e_1} p_{e_4}}{(1 - p_{e_1})(1 - p_{e_4})}
    \PP(C_F \mid \Ga=\ga) 
    \\
    &\ge  \frac{p_{e_1} p_{e_4}}{(1 - p_{e_1})(1 - p_{e_4})}
    \PP(C_F),
  \end{align*}
  by \eqref{G224}.
  
  \begin{figure}[htb]
    \begin{center}
      \includegraphics[width=1.0\textwidth]{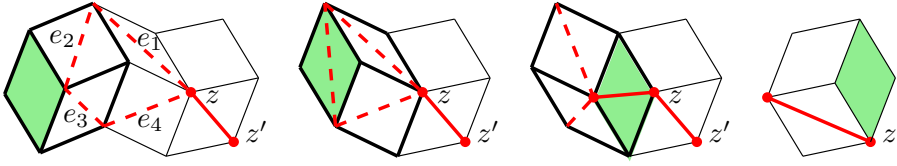}
    \end{center}
    \caption{Three \stt s contributing to $\Sigma_{j+1}$,
      from left to right.
      The dashed edges are closed, the bold edges are open. 
      The first and last passages occur with probability $1$, and 
      the second with probability 
      ${p_{e_1} p_{e_4}}/{(1 - p_{e_1})(1 - p_{e_4})}$.
    }
    \label{fig:chance_of_non_decrease}
  \end{figure}
  
  In summary, we have that
  \begin{align}
    \PP(h^{k}_{j+1} \ge h^k_j \mid \Ga=\ga )
    & \geq \frac{p_{e_1} p_{e_4}}{(1 - p_{e_1})(1 - p_{e_4})} 
    \prod_{f\in F} (1-p_f) \nonumber \\
    &= p_{e_1} p_{e_4}(1 - p_{e_2})(1 - p_{e_3}) \nonumber\\
    &\geq p_{\pi-\eps}^4  = 2\delta, \label{cond_non_decrease}
  \end{align}
  by \eqref{G226}. The proof of \eqref{non_dec2} is complete.

  It remains to show that \eqref{non_dec0}--\eqref{non_dec2} imply the lemma. 
  This may seem obvious, but, since the sequence $(h^k_j)_j$ is not Markovian, 
some technical details are needed.

  Suppose $h^k \gest H^k$.   We shall bound (stochastically) 
the sequence $(h_j^k)_j$ by a Markov chain, as follows.
  Let $(X_j: j=N-k, \dots, 2N-k)$
  be an inhomogeneous Markov chain taking values in $\ZZO$, with transition probabilities given by
  \begin{gather*}
    X_{j+1}=  X_j  \quad \text{if } X_j \ne j+1,\\
    P(X_{j+1}=x \mid X_j = j+1) = \begin{cases}
      2\de &\text{ if } x = j+1,\\
      1-2\de &\text{ if } x = j.
    \end{cases}
  \end{gather*}
  By \eqref{non_dec0}--\eqref{non_dec2}, for all $j$,
  $$
  \PP(h_{j+1}^k \ge x \mid h_j^k = y)
  \ge P(X_{j+1} \ge x \mid X_j = z), \quad x,y,z\in \ZZO,\ z \le y.
  $$
  Let $X_{N-k}= H^k$. By the induction hypothesis,
  $H^k \lest h^k \le h_{N-k}^k$, whence  $X_{2N-k} \lest h_{2N-k}^k$
  as in \cite[Sect.\ 3.3]{GM1} (by \cite[Lemma 3.7]{GM1}, iterated).
  Moreover $X_{2N-k} - X_{N-k} =_{\text{\rm st}}  \De_{k+1}$.
  Therefore,  
  \begin{align*}
    h^{k+1}  \geq h^k_{2N-k} \gest X_{2N-k} =_{\text{\rm st}} H^{k} + \De_{k+1} = H^{k+1},
  \end{align*}
  as claimed.

%
\end{proof}

\section{Proof of Theorem \ref{main}: The general case}\label{sec:proof2}

Let $G\in\sG(\eps,I)$. 
By \SGP$(I)$, there exist two families $(s_j:j\in\ZZ)$ and $(t_i:i\in\ZZ)$
of tracks forming a square grid of $G$. 
A \stt\ is said to act `between $s_k$ and $s_0$' 
if the three faces of $G^\di$ on which it acts are between $s_k$ and $s_0$.
(Recall from  Section \ref{sec:bxp_G} that such faces may belong to 
$s_k$ but not to $s_0$).
A path is said to be between $s_0$ and $s_k$ 
if it comprises  only edges between $s_0$ and $s_k$
(that is, edges belonging to faces between $s_0$ and $s_k$).
A vertex of $G^\di$ is said to be \lq just below' $s_0$ 
if it is adjacent to $s_0$ and between $s_{-1}$ and $s_0$. 

Let $E_N = E_N(G)$ be the event that there exists an open path $\ga$ on $G$ such that:
\begin{letlist}
\item $\g$ is between $s_0$ and $s_N$,
\item the endpoints of $\g$ are just below $s_0$,
\item one endpoint is between $t_{-2N}$ and $t_{- N}$ 
  and the other between $t_{N}$ and $t_{2N}$.
\end{letlist}
There is no further condition on the horizontal extent of $\ga$. 
We claim that there exists $\delta = \delta(\eps,I) > 0$, 
independent of $G$ and $N$, such that
\begin{equation}\label{G620}
  \PP_G (E_N) \geq \delta, \qquad N \ge 1.
\end{equation}
Since $\g$ contains a horizontal crossing of the domain
$\Dom=\Dom(t_{-N}, t_N; s_0, s_N)$, \eqref{G620} implies
$$
\PP_G\bigl[\Ch(t_{-N}, t_N; s_0, s_N)\bigr] \ge \de.
$$
Since $\de$ depends only on $\eps$ and $I$, the corresponding inequality
holds for crossings of translations of $\Dom$, and also with
the roles of the $(s_j)$ and $(t_i)$ reversed.
By Proposition \ref{grid_bxp}, the claim of the theorem 
(together with the stronger statement \eqref{G2231}) follows from \eqref{G620},
and we turn to its proof. 

The method is as follows. 
Consider the graph $G$ between $s_0$ and $s_N$. 
By making a finite sequence of \stt s between $s_N$ and $s_0$, 
we shall move the $s_j$ downwards
in such a way that the section of the resulting graph, 
lying both between $t_{-2N}$ and $t_{2N}$ and between  
the images of $s_0$ and $s_N$, 
forms a box of an isoradial square lattice.
By Corollary \ref{bxp_square}, this box is crossed horizontally 
with probability bounded away from $0$.
The above \stt s are then reversed to obtain a horizontal
crossing of $\Dom$ in the original graph $G$.

Since a finite sequence of \stt s changes $G$ at only finitely many places,
we may retain the track-notation $s_j$, $t_i$ throughout their application.  
We say $s_j,s_{j+1}, \dots, s_{j+k}$ are \emph{adjacent} \emph{between $t_{N_1}$ and  $t_{N_2}$}
if there exists no track-intersection in  the domain $\Dom(t_{N_1}, t_{N_2};s_j, s_{j+k})$
except those on $s_j,s_{j+1}, \dots, s_{j+k}$.
The proof of the next lemma is deferred until later in this section. 
  
\begin{lemma}\label{grid_slide}
  There exists a finite sequence  $(T_k : 1 \leq k \leq K)$ of star--triangle transformations, 
  each acting between $s_N$ and $s_0$, such that, in $T_K\circ \dots \circ T_1(G)$, 
  the tracks $s_0, \dots, s_{N}$ are adjacent between $t_{-2N}$ and $t_{2N}$. 
\end{lemma}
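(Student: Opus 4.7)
The plan is to produce the required sequence of \stt s by eliminating from $\Dom := \Dom(t_{-2N},t_{2N};s_0,s_N)$, one at a time, every rhombus of $G^\di$ whose associated track-intersection does not lie on any of $s_0,\ldots,s_N$. Call such rhombi \emph{bad}: by definition they correspond to intersections of the form $t_i \cap u$ with $u \in S$ and $-2N \le i \le 2N$, or $u \cap u'$ with $u,u' \in S$. By \SGP$(I)$, the number of track-intersections on any given track between two consecutive tracks of $T_2$ is strictly less than $I$; combined with the fact that any two tracks intersect at most once, this bounds the number of bad rhombi in $\Dom$ by a finite constant depending only on $N$ and $I$.

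The elementary tool is the interpretation of the \stt\ on the rhombic tiling $G^\di$: it is a flip of three rhombi sharing a hexagon, corresponding to three pairwise-intersecting tracks $a,b,c$ meeting in a triangle; the flip reverses, along each of these tracks, the relative order of its two crossings with the other two. In particular, whenever a bad rhombus $R = u \cap b$ lies adjacent on $u$ to a rhombus $u \cap c$ whose third track $c$ also intersects $b$ (so that the triangle $u,b,c$ is present in the current tiling), a single \stt\ transposes $R$ and $u \cap c$ along $u$. Since all three rhombi of such a triangle lie in the strip between $s_0$ and $s_N$ whenever the central bad rhombus does, the resulting \stt\ is of the type allowed by the lemma.

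The strategy is then as follows: at each stage select a bad rhombus $R = u \cap b$ and push it step by step along $u$, via these elementary transpositions, towards whichever of the lateral boundary tracks $t_{-2N}$, $t_{2N}$ is the nearer on $u$. After finitely many steps $R$ crosses this boundary and so leaves $\Dom$. The requisite triangles at each step are either already present or can be obtained by a short preliminary sequence of \stt s, invoking the general combinatorial fact (cf.\ \cite{Ken02}) that any two rhombic tilings of a simply connected bounded region with the same boundary word are connected by \stt s acting inside that region. Repeating until no bad rhombus remains in $\Dom$ yields the required sequence.

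The main obstacle is to verify that this process terminates, since pushing one bad rhombus out may shift others within $\Dom$, and the auxiliary \stt s used to realign a triangle may themselves perturb the configuration. This is handled by introducing a monovariant — for instance, the sum over bad rhombi still in $\Dom$ of the number of rhombi along their track separating them from the chosen lateral exit — which is a non-negative integer and can be shown to decrease under each elementary push, provided bad rhombi are processed in an order that does not reintroduce eliminated ones. Careful scheduling (e.g.\ processing the outermost bad rhombi first, proceeding inwards) ensures that this monovariant strictly decreases at each main step, and so the algorithm halts after finitely many \stt s, proving the lemma.
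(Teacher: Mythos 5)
Your approach is genuinely different from the paper's and contains gaps that I do not think can be filled without essentially reverting to the paper's argument. You propose to push each bad rhombus \emph{laterally} along its track towards one of $t_{\pm 2N}$, while the paper instead pushes bad intersection points \emph{upwards} past $s_1$, iterating over $s_1, s_2, \dots, s_N$. The paper's key device is a partial order $\ge$ on intersection points, defined via increasing paths over tracks oriented from their $s_0$-crossing to their $s_1$-crossing; a \emph{maximal} black point $y = r_1\cap r_2$ is then automatically adjacent to $s_1$ along both $r_1$ and $r_2$, so $r_1,r_2,s_1$ form a genuine track-triangle and a single \stt\ moves $y$ above $s_1$, strictly decreasing the count of black points. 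That single observation simultaneously produces an applicable \stt, guarantees it acts between $s_1$ and $s_0$, and gives termination.

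Your proposal leaves all three of these for free at points where they do not come for free. First, the required track-triangle is not in general present when you want to transpose $u\cap b$ with an adjacent $u\cap c$; you acknowledge this and defer to ``a general combinatorial fact'' about flip-connectedness of rhombic tilings with the same boundary word, but that result is not proved here, and invoking it to manufacture triangles on demand is circular — if you have that global result available you could try to apply it directly to the whole strip, at which point you would still need to check the intermediate flips stay between $s_0$ and $s_N$. Second, and more fatally, a lateral push of $u\cap b$ can meet the rhombus $u\cap s_0$ (or $u\cap s_N$) as its next neighbour along $u$; the corresponding flip involves the face $b\cap s_0$, which is a rhombus of $s_0$ and therefore \emph{not} between $s_N$ and $s_0$ under the paper's definition, so the resulting \stt\ is disallowed by the statement of the lemma. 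The paper avoids this entirely: its \stt\ touches one rhombus strictly between $s_0$ and $s_1$ plus two rhombi of $s_1$, which is exactly the permitted type. Third, the termination argument is asserted rather than proved: the claim that ``processing the outermost bad rhombi first'' makes your proposed monovariant strictly decrease, and that the auxiliary flips used to create triangles do not spoil it, is exactly the content that needs a proof and is where a naive version of this scheme typically fails. A correct proof needs a monotone quantity that is insensitive to the creation of intermediate triangles, and that is precisely what the partial-order/maximality construction in the paper supplies.
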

  
Let $(T_k : 1 \leq k  \leq K)$ be given thus, 
and write $G^0=G$ and $G^k = T_k \circ \dots \circ T_1(G^0)$.
Let $S_k$ be the inverse transformation of $T_k$, 
as in Section \ref{sec:stt0}, so that $S_k (G^k)  = G^{k-1}$. 
Since the track notation is retained for each $G^k$, 
the event $E_N$ is defined on each such graph.
By a careful analysis of its action, we may see that $S_k$
preserves $E_N$ for $k = K, K-1,\dots, 1$. 
The details of the argument (which requires no estimate
of the value of $K$)
are provided in the next paragraph.

Let $1 \leq k \leq K$ and 
let $\g$ be an open path of $G^k$  satisfying (a)--(c) above.
Since $T_k$ acts between $s_N$ and $s_0$, 
it does not move $s_0$, and $S_k (\g)$ has the same endpoints as $\g$. 
Moreover the three faces of $(G^k)^\di$ on which $S_k$ acts 
are either all strictly below $s_N$, or two of them are part of $s_N$ and the third is above. 
In the first case $S_k(\g)$ may differ from $\g$ 
but is still contained between $s_0$ and $s_N$;
in the second case $S_k$ does not influence $\g$. 
In conclusion, $S_k(\g)$ is an open path on $G^{k-1}$ that satisfies (a)--(c).

Since the canonical  measure is conserved under a \stt, the remark above implies 
\begin{equation}\label{grid_transport}
  \PP_{G}(E_N) \geq \PP_{G^K}(E_N).
\end{equation}
 It remains to prove a lower bound for $\PP_{G^K}(E_N)$. 

Write $(r_i : i \in \ZZ)$ for 
the sequence of all tracks other than the $s_j$, 
indexed and oriented according to their intersections with $s_0$, with $r_0 = t_0$,
and including the $t_i$ in increasing order. 
Let $\b_j$ be the transverse angle of $s_j$, 
and $\pi+\alpha_i$ that of $r_i$. 
Since each $r_i$ intersects each $s_j$, 
the vectors $\balpha = (\alpha_i : i\in \ZZ)$,
$\bbeta = (\beta_j:j\in \ZZ)$ satisfy
\eqref{bounded_angles}, and hence $G_{\balpha, \bbeta}$ 
is an isoradial square lattice satisfying \BAC$(\eps)$.
By Corollary \ref{bxp_square},  there exists $\de'=\de'(\eps)>0$ such that
$G_{\balpha, \bbeta}$  satisfies the \bxp\ \BXP$(\de')$. 

The track-system of $G^K$ inside $\Dom(t_{-2N}, t_{2N}; s_0, s_N)$ is isomorphic
to a rectangle of $\ZZ^2$, and comprises the
horizontal tracks $s_0,s_1,\dots,s_N$, crossed in order
by those $r_i$ between (and including) $t_{-2N}$ and $t_{2N}$.
Thus, $G^K$ agrees with $G_{\balpha,\bbeta}$ inside this domain.

Consider the following boxes of $(G^K)^\di$:
\begin{align*}
  V_1 &= \Dom(t_{-2N}, t_{-N};s_0, s_N),\\
  V_2 &= \Dom(t_{N} , t_{2N}; s_0, s_N ),\\
  H   &= \Dom(t_{-2N}, t_{2N}; s_0, s_N).
\end{align*}
By the Harris--FKG inequality,
\begin{align}
  \PP_{G^K}(E_N) & \ge \PP_{G^K}\bigl[\Cv(V_1) \cap \Cv(V_2) \cap \Ch(H)\bigr]\nonumber\\
  &\ge \PP_{G^K}[\Cv(V_1)] \PP_{G^K}[\Cv(V_2)]\PP_{G^K}[\Ch(H)].\label{G625}
\end{align}  
The boxes $V_1$, $V_2$ in $(G^K)^\di$ may be regarded as boxes in
$G_{\balpha,\bbeta}^\di$, and
have height $N$ and width at least $N$.
Similarly, the box $H$ has height $N$ and width at most $4I N $.
By \BXP$(\de')$ and \eqref{G625}, there exists $\de= \delta(\eps, I) > 0 $ such that 
$\PP_{G^K}(E_N) \geq  \de$, 
and \eqref{grid_transport} is proved. 

\begin{proof}[Proof of Lemma \ref{grid_slide}]
  We shall prove 
  the existence of a finite sequence  $(T_k : 1 \leq k \leq K)$ of star--triangle transformations, 
  each acting between $s_1$ and $s_0$, such that, in $T_K\circ \dots \circ T_1(G)$, 
  the tracks $s_0, s_1$ are adjacent between $t_{-2N}$ and $t_{2N}$.
  The general claim follows by iteration.  

  In this proof we work with the graph $G$ only through its track-set $\sT$. 
  Tracks will be viewed as arcs in $\RR^2$.
  A \emph{point} of $\sT$ is the intersection of two tracks, and we 
  write $\sP$ for the set of points. 

  Let $\sN$ be the set of tracks that are not parallel to $s_0$.
  Any $r \in \sN$ intersects both $s_0$ and $s_1$ exactly once, and 
  we orient such $r$ in the direction from its intersection with $s_0$ to that with $s_1$. 
  
  An oriented path $\gamma$ on the track-set $\sT$ is called \emph{increasing} 
  if it uses only tracks in $\sN$
  and it conforms to their orientations. 
  For points $y_1,y_2\in \sP$, 
  we write $y_1\ge y_2$
  if there exists an increasing path $\gamma$ from $y_2$ to $y_1$.
  By the properties of $\sT$ given in Section \ref{sec:track}, 
  the  relation $\ge$ is reflexive, antisymmetric, and transitive, 
  and is thus a partial order on $\sP$. 
  
  Let $\sR_k$ be the closed region of $\RR^2$ delimited by
  $t_{-k}$, $t_k$, $s_0$, $s_1$, illustrated in Figure \ref{fig:grid_slide}.
  A  point $y \in \sP$ is coloured \emph{\prob} if it is strictly between $s_0$ and $s_1$, 
  and in addition $y \ge y'$ for some $y'$ in $\sR := \sR_{2N}$ or on its boundary. 
  In particular, any point in the interior of $\sR$  
  or of its left/right boundaries is \prob.  
  We shall see that the \prob\ points are precisely those
  to be `moved' above $s_1$ by the \stt s $T_k$.
  
  \begin{figure}[htb]
    \centering
    \includegraphics[width=0.8\textwidth]{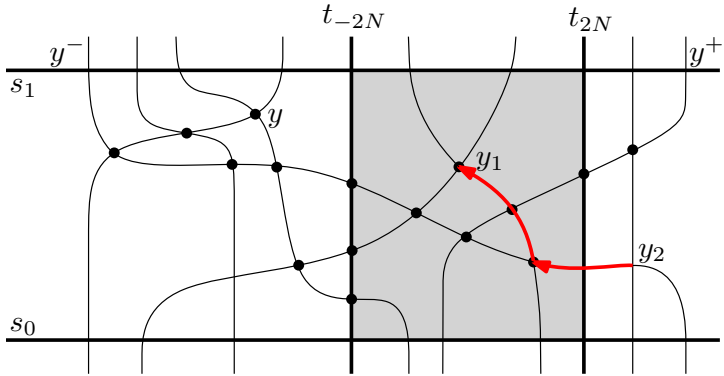}
    \caption{The \prob\ points are indicated. 
      The path $\gamma$ from $y_2$ to $y_1$ is drawn in red. 
      The points $y$ and $y_1$ are  maximal, 
      and are not comparable.
      The region $\sR$ is shaded.}
    \label{fig:grid_slide}
  \end{figure} 

We prove first that the number $B$ of black points is finite.
By \BAC$(\eps)$, the number of tracks intersecting $\sR$ is finite.
  Let $y^+$ (\resp, $y^-$) be the rightmost (\resp, leftmost) 
point on $s_1$ that is the intersection of $s_1$
  with a track  $r$ that intersects $\sR$. 
 We claim that 
  \begin{equation}\label{finite_prob}
    \mbox{if $r\in\sT$ has a \prob\ point, 
      it intersects $s_1$ between $y^-$ and $y^+$.}
  \end{equation} 
  Assume \eqref{finite_prob} for the moment.
  Since a \prob\ point is the unique intersection of two tracks, and 
  since  \eqref{finite_prob} implies that there are only finitely many 
  tracks with \prob\ points, we have that $B<\oo$.
  
 We prove \eqref{finite_prob} next.
  Let $y$ be a \prob\ point. 
  If $y \in \sR$, \eqref{finite_prob} follows immediately. 
  Thus we may suppose, without loss of generality,
  that $y$ is strictly to the left of $\sR$.
  There exists an increasing path $\gamma$, 
  starting at a point on the left boundary of $\sR$ and ending at $y$. 
  Take $\gamma$ to be the `highest' such path.
  Let $(r_l : 1 \leq l \leq L)$ be the tracks used by $\gamma$ in order, where $L < \infty$. 
  We will prove by induction that, for $l\ge 1$,
  \begin{equation}\label{G626}
    \mbox{$r_l$ intersects $s_1$ between $y^-$ and $y^+$.}
  \end{equation} 
  Clearly \eqref{G626} holds with $l=1$ since $r_1$ intersects $\sR$.
  
  \begin{figure}[htb]
    \centering
    \includegraphics[width=0.9\textwidth]{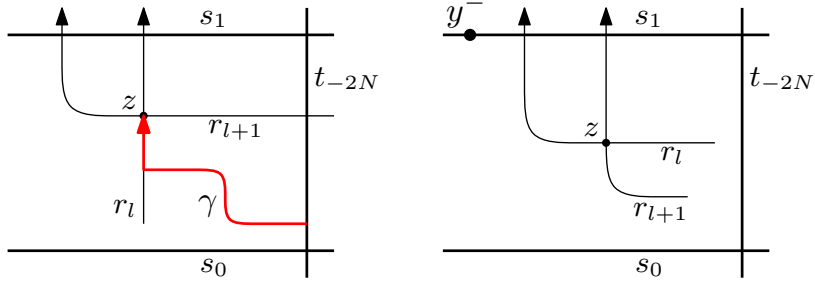}
    \caption{\emph{Left}: The oriented track $r_{l+1}$ crosses $r_l$ from right to left,
      in contradiction of the choice of $\g$ as highest.
    \emph{Right}:  The track $r_{l+1}$ crosses $r_l$ from left to right. }
    \label{fig:Lfinite}
  \end{figure}

  Suppose $1 \le l < L$ and \eqref{G626} holds for $r_l$, and let $z = r_l \cap r_{l+1}$.
  If $r_{l+1}$ intersects $\sR$ (before or after $z$), \eqref{G626} follows trivially. 
  Suppose $r_{l+1}$ does not intersect $\sR$.
  There are two possibilities:
  either $r_{l+1}$ crosses $r_l$ from right to left, or from left to right.
  The first case is easily seen to be impossible, 
  since it contradicts the choice of $\g$ as highest.  
  Hence, $r_{l+1}$ crosses $r_l$ from left to right (see Figure \ref{fig:Lfinite}).
  The part of the oriented track $r_{l+1}$ after $z$ 
  is therefore above the corresponding part of $r_l$. 
  Since $r_{l+1}$ intersects $s_1$ after $z$, 
  and does not intersect $\sR$,
  the intersection of $r_{l+1}$ and $s_1$ lies  between $y^-$ and $y^+$,
  and the induction step is complete. 

  In conclusion $r_L$ intersects $s_1$ between $y^+$ and $y^-$.
  Let $r$ denote the other track containing $y$. 
  By the same reasoning, $r$ intersects $r_L$ from left to right,
  whence it also intersects $s_1$ between $y^+$ and $y^-$.
  This concludes the proof of \eqref{finite_prob}, and we deduce that $B<\oo$.
  
  If $B = 0$,  there is no point in the interior of either $\sR$ or its left/right sides, 
  whence $s_0$, $s_1$ are adjacent  between $t_{-2N}$ and $t_{2N}$. 
  
  Suppose $B \ge 1$.  We will show that $B$ may be reduced by one 
  by a \stt\ acting between $s_0$ and $s_1$, 
  and the claim of the lemma will follow by iteration. 
  
  Since $B<\infty$, there exists a \prob\ point that is maximal in the partial order $\ge$,
  and we pick such a point $y = r_1\cap r_2$. 
  By the maximality of $y$, the tracks $r_1$, $r_2$, $s_1$ form a track-triangle.
  By applying the \stt\ to this track-triangle 
  as in Figure \ref{fig:isoradial_star_triangle}, the point $y$
  is moved above $s_1$, and the number of \prob\ points is decreased. 
  This concludes the proof of Lemma \ref{grid_slide}.
\end{proof}

\section{Arm exponents}\label{sec:arm-exps}

\subsection{Outline of proof}

We recall the isoradial embedding  $G_{0,\pi/2}$ of the 
homogeneous square lattice,
with associated measure denoted $\PP_{0,\pi/2}$. 

\begin{prop}\label{exp_transport}
  Let $k \in \{1,2,4,\dots\}$, $\eps>0$, and $I \in \NN$.
  There exist constants $c_i = c_i(k,\eps,I) > 0$ 
  and $N_0 = N_0(k,\eps,I)\in\NN$ such that, for $N \geq N_0$, 
  $n \geq c_0 N_0$, $G \in \sG(\eps,I)$, and any vertex $u$ of $G^\di$,
  \begin{align*}
    c_1 \PP_{0,\pi/2}[\Arm_k(N, n)] \leq \PP_G[\Arm_k^u(N, n)] \leq c_2 \PP_{0,\pi/2}[\Arm_k(N, n)].
  \end{align*}
\end{prop}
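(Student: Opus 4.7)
The plan is to combine the two-stage graph transformation underlying Theorem \ref{main} (Sections \ref{sec:proof1} and \ref{sec:proof2}) with an arm-tracking argument run in parallel with each star--triangle transformation. In Stage 1, arm-event probabilities are compared between isoradial square lattices; in Stage 2, they are transferred between a general $G \in \sG(\eps, I)$ and a suitable isoradial square lattice via the square-grid construction of Lemma \ref{grid_slide}. Since Theorem \ref{main} already yields the \bxp\ for every graph in $\sG(\eps,I)$ with constant depending only on $\eps, I$, we may freely invoke the standard RSW/gluing consequences, notably quasi-multiplicativity of arm probabilities (along the lines of \cite[Sect.\ 5]{GM2}).

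For Stage 1, fix $\balpha$, $\bbeta$, and an angle $\bef$ as in Proposition \ref{bxp_transport}. Interpose an irregular block above a regular block as in Section \ref{sec:horizontal}, apply the track-exchange operators $U_k$, and couple configurations on $G_{\balpha, \bef}$ with those on $G_{\balpha, \bbeta}$. Given a configuration on $G_{\balpha, \bef}$ realising $\Arm_k^u(N, n)$, transport the $k$ disjoint arms simultaneously under the coupling. By the star--triangle coupling of Proposition \ref{prop:st-coupling}(c)--(d), primal and dual connections inside each hexagon are preserved, so arms map to arms of the same colour, and their images in $G_{\balpha, \bbeta}$ realise $\Arm_k^{u'}(N', n')$ for a perturbed centre and perturbed radii satisfying $|u'-u|, |N'-N|, |n'-n| \le C(\eps)(N + 1)$. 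Quasi-multiplicativity then absorbs the discrepancies, giving the required two-sided comparison between $\PP_{\balpha,\bef}[\Arm_k^u(N,n)]$ and $\PP_{\balpha,\bbeta}[\Arm_k^{u'}(N,n)]$. Applying this with $\balpha = 0$ and $\bef = \pi/2$ compares the homogeneous lattice to an arbitrary $G_{\balpha,\bbeta}$.

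For Stage 2, fix $G \in \sG(\eps,I)$ and a vertex $u$. Using the square grid $S\cup T$ of $G$, apply Lemma \ref{grid_slide} to a domain of the form $\Dom(t_{-M}, t_M; s_0, s_N)$ centred near $u$ with $M, N \asymp n$, to deform this region into a rectangle of an isoradial square lattice $G_{\balpha,\bbeta}$. Each star--triangle transformation in Lemma \ref{grid_slide} is local and preserves open/dual connections within its hexagon, hence transports any $k$-arm configuration inside $\Dom$ with controlled endpoint-displacement. By Proposition \ref{grid_bxp}, the domain $\Dom$ contains $\Ann^u(N,n)$ (up to constants depending only on $\eps, I$), so arm-events are exchanged up to bounded radial discrepancies, absorbed again via quasi-multiplicativity. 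Combining Stages 1 and 2 yields the full conclusion.

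The principal obstacle is the drift analysis in Stage 1: one must track $k$ disjoint arms through a sequence of $O(n)$ track-exchanges, controlling both their mutual disjointness and their cyclic colour order, and bounding the total endpoint-drift. The per-step drift of each arm is dominated by a Bernoulli$(\eta)$ increment with $\eta = \eta(\eps) \in (0,1)$, exactly as in Lemma \ref{column_growth_control}; running $k$ such coupled growth processes in parallel gives total drifts uniformly of order $c(\eps)n$ with $c(\eps)$ small, which (by choosing $N_0$ large enough) fits inside the annulus. The restriction $k\in\{1,2,4,\dots\}$ enters because alternating colour sequences are combinatorially stable under each $\Sigma_j$: the preservation of disjoint paths of prescribed alternating colours reduces, via Proposition \ref{prop:st-coupling}, to a case analysis isomorphic to the single-path analysis of Figure \ref{fig:path_transfo} combined with its dual; for non-alternating bichromatic colour sequences one would need to argue about monochromatic clusters, which is not addressed by the methods of this paper.
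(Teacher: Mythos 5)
Your high-level structure is correct (Stage 1 for isoradial square lattices via track-exchange, Stage 2 for general $G\in\sG(\eps,I)$ via Lemma~\ref{grid_slide}, quasi-multiplicativity/separation to absorb constants, and the alternating restriction on $k$). However, the core technical device of the paper's proof is missing, and the replacement you propose has a gap.

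The essential ingredient you omit is the class of \emph{modified arm-events} $\wt{\Arm}_k^{u,s}(N,n)$ of Section~\ref{sec:alt_def}, together with Proposition~\ref{exp_equiv} proving their comparability with the usual $\Arm_k$. These are designed precisely so that (i) all the distinguished endpoints $x_i$, $y_i$ (and $x_1^*$, $y_1^*$) lie on, and stay on, the base track $s_0$ (equivalently, in the vertex set $\{v_{m,0}:m\in\ZZ\}$), which is \emph{fixed} under every track-exchange $\Sigma_j$, and (ii) the clusters are a priori confined to $\La_u^\di(3\cd^2 n)$ via condition~\eqref{G601}. Because the endpoints are pinned, there is no probabilistic drift analysis at all in the arm-event proof: the only thing that must be verified is the \emph{deterministic} containment $C_{x_i}(\om^m)\subseteq\La_u^\di(M+2m)$ (by the argument of Lemma~\ref{D_control}), which ensures the clusters never reach the lateral boundary and so are neither broken nor merged. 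Your proposal instead transports ordinary arm-events with free endpoints and attempts to control their displacement via a Lemma~\ref{column_growth_control}-style Bernoulli bound; this is not what the paper does, and it runs into trouble.

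Concretely, your displacement bound $|u'-u|,|N'-N|,|n'-n|\le C(\eps)(N+1)$ cannot be right: the transportation consists of $M=\Theta(n)$ track-exchanges, each of which can displace path vertices by $O(1)$, so without pinning the endpoint displacement is a priori $O(n)$, not $O(N)$. This is fatal at the \emph{inner} radius: a displacement of order $n\gg N$ of the inner endpoints does not leave a perturbed annulus $\Ann(N',n')$ with $N'\asymp N$; it destroys the inner boundary entirely, and quasi-multiplicativity cannot recover from that. Your later statement that total drifts are of order $c(\eps)n$ with ``$c(\eps)$ small'' is also not supported: the parameter $\eta(\eps)$ of Lemma~\ref{column_growth_control} is merely $<1$, not small, so even the probabilistic version of this bound gives a drift comparable to $n$. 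The paper sidesteps all of this by pinning and confining via the modified events. To repair your argument you would need to reintroduce exactly this device: rephrase the arm-event so that its arms emanate from vertices of the invariant base track and insert the confinement condition~\eqref{G601}; then the transportation becomes deterministic and the comparison closes via Proposition~\ref{exp_equiv}.
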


Part (a) of Theorem \ref{main3} is an immediate consequence;
part (b) is discussed in Section \ref{sec:kesten}.
 
Sections \ref{sec:alt_def}--\ref{sec:sep_thm} are devoted to the proof of 
Proposition \ref{exp_transport}.
In Section \ref{sec:alt_def}
is presented a modified definition of the arm-events, 
adapted to the  context of an isoradial graph. 
This is followed by Proposition \ref{exp_equiv}, which asserts
in particular the equivalence of the two types of arm-events.
The proof of Proposition \ref{exp_transport} follows, using the
techniques of the proof of Theorem \ref{main};
the proof for isoradial square lattices is in Section \ref{sec:proof_arm1},
and  for general graphs in Section \ref{sec:proof_arm2}. 
Section \ref{sec:sep_thm} contains the so-called separation theorem,
together with 
the proof of Proposition \ref{exp_equiv}.

For the remainder of this section, $\eps>0$ and 
$I\in\NN$ shall remain fixed. 
Unless otherwise stated, 
constants $c_i > 0$, $N_0\in\NN$ depend only on $\eps$, $I$, 
and on the number $k$ of arms in the event under study.
We use the expression \lq for $n > N$ sufficiently large' 
to mean: for $n \geq c_0 N$ and $N> N_0$.

\subsection{Arm-events}\label{sec:alt_def}

Let $G \in \sG(\eps,I)$, $k \in \{1,2,4,\dots\}$, and let 
$s$ be a track and $u$ be a vertex of $G^\di$, adjacent to $s$. 
For $n \geq N$, 
we define the \lq modified arm-event' $\wt{A}^{u,s}_k(N, n)$ as follows.
For simplicity of notation, we omit explicit reference
to $u$ and $s$ when no ambiguity results, but in such a case we say that
$\wt A_k(N,n)$ is `centred at $u$'.

A certain technical assumption will be useful in Section \ref{sec:proof_arm1}, 
when applying \stt s to isoradial square lattices. Recall the notation $\La_u^\di(n)$ from Section \ref{sec:metrics},
and the constant $\cd$ of \eqref{di_euclid}. 
We shall frequently require
a primal vertex $u$ and dual vertex $u^*$ of $G^\di$ to
satisfy the appropriate inclusion of the following:
\begin{equation}\label{G601}
  C_u \subseteq \La_u^\di(3 \cd^2 n), \qquad C^*_{u^*} \subseteq \La_{u^*}^\di(3 \cd^2 n).
\end{equation}

The modified arm-events $\wt{A}_k(N, n)=\wt{A}^{u,s}_k(N, n)$ are defined thus:
\begin{romlist}
\item For $k = 1$, $\wt{\Arm}_1(N, n)$ is the event that there exist vertices
  $x_1 \in \La_u^\di (N)$ and $y_1 \notin \La_u^\di (n)$, 
  both adjacent to $s$, on the same side of $s$ as $u$
  and satisfying \eqref{G601}, such that
  $x_1 \xlra{} y_1$. 

\item For $k =2$, $\wt{\Arm}_2(N, n)$ is the event that there exist vertices
  $x_1, x_1^* \in \La_u^\di(N) $ and $y_1, y_1^* \notin \La_u^\di(n)$, 
  all adjacent to $s$ and on the same side of $s$ as $u$, such that:
  \begin{letlist}
  \item $x_1$, $x_1^*$ and $y_1$, $y_1^*$ satisfy \eqref{G601}, 
  \item
    $x_1 \xlra{} y_1$ and $x_1^* \dxlra{} y_1^*$. 
  \end{letlist}
  
\item For $k =2j \geq 4$, $\wt{\Arm}_k(N, n)$ is the event that there exist vertices
  $x_1,\dots, x_j \in \La_u^\di(N)$ and $y_1, \dots,y_j \notin \La_u^\di(n)$, 
  all adjacent to $s$ and on the same side of $s$ as $u$, such that:
  \begin{letlist}
  \item each $x_i$ and $y_i$ satisfies \eqref{G601},
  \item
    $x_i \xlra{} y_i$ and $x_i \nxlra{} x_{i'}$ for $i \neq i'$. 
  \end{letlist}
\end{romlist}

The following proposition contains three statements, the third of which
relates the modified arm-events to those of
Section \ref{sec:main}.
All arm-events $A_k$ and $\wt A_k$ considered here 
are centred at the same vertex $u \in G^\di$.
The event $\wt{\Arm}_k(N, n)$ is to be interpreted
in terms of any of the tracks to which $u$ is adjacent.

\begin{prop}\label{exp_equiv}
  There exist constants $c_i > 0$ such that, for $n > N$ sufficiently large,
 \begin{align}
    \PP_G[\Arm_k(N, 2n)]  &\leq \PP_G[\Arm_k(N, n)] \leq c_1 \PP_G[\Arm_k(N, 2n)],  \label{exp_equiv_a}\\
    \PP_G[\Arm_k(N, n)]  &\leq \PP_G[\Arm_k(2N, n)] \leq  c_2  \PP_G[\Arm_k(N, n)], \label{exp_equiv_b}\\
    c_3 \PP_G[\Arm_k(N, n)]  &\leq \PP_G[\wt{\Arm}_k(N, n)] \leq c_4 \PP_G[\Arm_k(N, n)]. \label{exp_equiv_c}
  \end{align}
\end{prop}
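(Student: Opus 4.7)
Both easy directions in \eqref{exp_equiv_a} and \eqref{exp_equiv_b} are immediate from the set inclusions $\Arm_k(N,2n) \subseteq \Arm_k(N,n)$ and $\Arm_k(N,n) \subseteq \Arm_k(2N,n)$, valid whenever $2N \le n$: an open (or open$^*$) path crossing a larger annulus restricts to one crossing any sub-annulus with the same centre. The reverse bounds, commonly called quasi-multiplicativity of arm-probabilities, as well as the equivalence in \eqref{exp_equiv_c}, will rely on two ingredients: Theorem \ref{main} (together with \eqref{G455}, \eqref{G456}), which supplies the \bxp\ uniformly for $G$ and its dual $G^*$ for all $G \in \sG(\eps,I)$, and a separation theorem for arms, to be established in Section \ref{sec:sep_thm}. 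The latter asserts that, conditionally on $\Arm_k(N,n)$, one may with uniformly positive probability require the $k$ arms to end at points on $\pd \La_u^\di(N)$ and $\pd\La_u^\di(n)$ whose pairwise $d^\di$-distances are bounded below by constant multiples of $N$ and $n$, respectively.

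To prove $\PP_G[\Arm_k(N,n)] \le c_1 \PP_G[\Arm_k(N,2n)]$, I would condition on a separated realisation of $\Arm_k(N,n)$ and show that each of the $k$ separated endpoints on $\pd\La_u^\di(n)$ may be extended into $\pd\La_u^\di(2n)$ by an arm of the prescribed colour, through a bounded number of box-crossings and dual box-crossings in disjoint rectangles inside $\Ann^u(n,2n)$; existence of these crossings with uniformly positive probability is guaranteed by Theorem \ref{main} applied to both $G$ and $G^*$, and combining them via the Harris--FKG inequality yields the desired conditional lower bound. The proof of $\PP_G[\Arm_k(2N,n)] \le c_2 \PP_G[\Arm_k(N,n)]$ is dual, extending arms inward through $\Ann^u(N,2N)$, with separation at the inner scale playing the analogous role.

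For \eqref{exp_equiv_c}, the modified event $\wt\Arm_k(N,n)$ strengthens $\Arm_k(N,n)$ by requiring its endpoints to lie adjacent to a fixed track $s$ through $u$ on the same side as $u$, together with the cluster-size constraint \eqref{G601}. The upper bound $\PP_G[\wt\Arm_k(N,n)] \le c_4\PP_G[\Arm_k(N,n)]$ follows from the essentially trivial inclusion $\wt\Arm_k(N,n) \subseteq \Arm_k(cN, c^{-1}n)$ for a constant depending on $\eps$, combined with the already established \eqref{exp_equiv_a}--\eqref{exp_equiv_b} to absorb the factor. For the lower bound, I would again start from a separated realisation of $\Arm_k(N,n)$ and, near each of the $k$ separated endpoints, use a uniformly positive number of local box-crossings to reroute the arm to an endpoint that is adjacent to $s$ and on the correct side, paying only a uniformly positive probabilistic cost. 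The cluster-size condition \eqref{G601} is then imposed with uniformly positive extra cost using Theorem \ref{criticality}(a), which controls the one-arm probability from above.

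The principal difficulty is the separation theorem itself. In the classical homogeneous setting it is derived from translation-invariance together with RSW, but here the isoradial graph $G$ has no such symmetry, and the local geometry may vary across scales and positions. The required argument, to be presented in Section \ref{sec:sep_thm}, must therefore extract separation purely from the uniform \bxp\ of Theorem \ref{main} and the \bac\ \BAC$(\eps)$, following the Kesten--Nolin strategy but adapted so that all constants depend only on $\eps$ and $I$, with no reliance on lattice symmetry.
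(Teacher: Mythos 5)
Your proposal matches the paper's strategy for \eqref{exp_equiv_a}, \eqref{exp_equiv_b}, and the easy (second) inequality of \eqref{exp_equiv_c}: the trivial directions are set inclusions, the quasi-multiplicativity directions are obtained from the separation theorem plus extensions of separated arms by judiciously placed box-crossings (exactly Nolin's Prop.\ 12, which the paper cites), and the upper bound of \eqref{exp_equiv_c} follows from the metric-comparison inclusion $\wt\Arm_k(\cd^{-1}M,\cd m) \subseteq \Arm_k(M,m)$ combined with iterated applications of \eqref{exp_equiv_a}--\eqref{exp_equiv_b}.

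For the lower bound of \eqref{exp_equiv_c}, your rerouting-via-box-crossings idea is aligned with the paper's construction of the events $H_M$ (inside $B_M$) and $K_m$ (inside $\Ann(m,3m)$), but you have one concrete misstep. You impose the cluster-containment condition \eqref{G601} ``using Theorem \ref{criticality}(a)''; this is not the right tool. Theorem \ref{criticality}(a) bounds $\PP_G(\rad(C_v)\ge k)$ \emph{unconditionally}, and does not show that, \emph{conditional on} the arm event, the cluster is confined to $\La_u^\di(3\cd^2 n)$ with probability bounded below. What the paper does instead is include in $K_m$ explicit \emph{confining circuits} of the appropriate opposite colour at an outer scale --- an open$^*$ circuit in $\Ann(2m,3m)$ when $k=1$, both an open and an open$^*$ circuit when $k=2$, and an open$^*$ circuit in $\Ann(m+v_k,m+w_k)$ when $k\ge 2$ --- which geometrically block the relevant primal/dual clusters and force \eqref{G601}, and whose existence has uniformly positive probability by the \bxp. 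A related point: since $H_M$, $K_m$, and the arm event $\Arm_k^{J,J}(M,m)$ mix open and open$^*$ crossings, the combination cannot be done with the plain Harris--FKG inequality; the paper invokes the \emph{extended} Harris--FKG inequality of \cite[Lemma 3]{Kesten87} (see also \cite[Lemma 12]{Nolin}). Your write-up references only the ordinary Harris--FKG inequality, which would not apply to events of mixed monotonicity. Finally, the paper's construction relies quietly on the geometric observation that (after rotation) the transverse angle $\b$ of the track $s$ lies in $[\frac14\pi,\frac34\pi]$, which guarantees that $s$ intersects the constructed extensions and thus supplies the required track-adjacent vertices $x_i$, $y_i$; your ``reroute the arm to an endpoint adjacent to $s$'' needs something like this to be made precise.
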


By \eqref{exp_equiv_c}, for $n > N$ sufficiently large, 
there exist constants $c_5,c_6 >0$  such that, 
if $u$ is adjacent to the tracks $s$ and $t$,
\[
c_5 \PP_G[\wt{\Arm}_k^{u,s}(N, n)] 
\leq \PP_G[\wt{\Arm}_k^{u,t}(N, n)] 
\leq c_6 \PP_G[\wt{\Arm}_k^{u,s}(N, n)]. 
\]
The proof of Proposition \ref{exp_equiv} is deferred to Section \ref{sec:sep_thm}. 
It relies on the so-called separation theorem, an account of which may be found in that
section. 

\subsection{Proof of Proposition \ref{exp_transport}: Isoradial square lattices}\label{sec:proof_arm1}

Let $G$ be an isoradial square lattice satisfying 
the \bac\ \BAC$(\eps)$, and
let $u$ be a vertex of $G^\di$. As usual, the horizontal tracks  are labelled 
$(s_j: j \in \ZZ)$ and the vertical tracks $(t_i: i \in \ZZ)$.

As explained in Section \ref{sec:iso-sl},
$G = G_{\balpha, \bbeta}$ 
for angle-sequences $\balpha = (\a_i : i \in \ZZ)$, $\bbeta = (\b_j : j \in \ZZ)$
satisfying \eqref{bounded_angles}.
We label $\balpha$ and $\bbeta$ in such a way that $u = v_{0,0}$, 
whence  $u$ is adjacent to $t_0$ and $s_0$ 
(here we do not require $v_{0,0}$ to be primal). 
The horizontal track at level $0$, initially $s_0$, may change its label through track-exchanges.
Let $\bef $ be such that $\balpha$ 
and the constant sequence $(\bef)$ 
satisfy \BAC$(\eps)$, \eqref{bounded_angles}. 
All arm-events in the following are centred at $u=v_{0,0}$.

\begin{lemma}\label{exp_transport1}
  There exist constants $c_i>0$ such that, for $n > N$ sufficiently large,
  \begin{align*}
    c_1 \PP_{\balpha, \bef }[\Arm_k(N, n)] \leq 
    \PP_G[\Arm_k(N, n)] \leq 
    c_2 \PP_{\balpha, \bef }[\Arm_k(N, n)].
  \end{align*}
\end{lemma}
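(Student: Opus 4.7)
By Proposition \ref{exp_equiv}(c), it suffices to prove the corresponding two-sided bound for the modified arm-events $\wt{\Arm}_k^{u,s_0}(N,n)$, where $u = v_{0,0}$ and $s_0$ is the horizontal track at level 0 (common to $G$ and $G_{\balpha,\bef}$). The plan is to transport the arm-event between the two lattices using the track-exchange machinery developed in Section \ref{sec:proof1}, in a manner that mirrors how the \bxp\ was transported in Proposition \ref{horizontal_transport}. By symmetry (swapping the roles of $\bbeta$ and $\bef$), it is enough to establish the lower bound
$$
\PP_G[\wt{\Arm}_k(N,n)] \geq c\, \PP_{\balpha, \bef}[\wt{\Arm}_k(N,n)].
$$

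Choose $\lambda \in \NN$ large enough (depending on $\eps$) that $\La_u^\di(n)$ is contained in the strip $\{v_{i,j}: |i| \le \lambda n/2,\ 0 \le j \le \lambda n/2\}$; this is possible by Proposition \ref{distance_equiv}. Introduce the hybrid sequence $\wt\bbeta$ given by $\wt\beta_j = \bef$ for $0 \le j < \lambda n$ and $\wt\beta_j = \beta_{j - \lambda n}$ for $j \ge \lambda n$, and consider the isoradial square lattice $G^0 := G_{\balpha,\wt\bbeta}$. Because $G^0$ has a regular block of height $\lambda n$ above $s_0$ that agrees with $G_{\balpha,\bef}$ throughout $\La_u^\di(n)$, we have
$$
\PP_{G^0}[\wt{\Arm}_k^{u,s_0}(N,n)] = \PP_{\balpha, \bef}[\wt{\Arm}_k^{u,s_0}(N,n)].
$$
Apply sequentially the track-exchange maps $U_1,\dots,U_{\lambda n}$ defined in \eqref{G222}, each of which raises the regular block by one level while bringing an irregular-block track down through it. After $\lambda n$ such operations, the resulting graph $G^K$ agrees with $G_{\balpha,\bbeta}$ throughout $\La_u^\di(n)$.

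The core claim is that, with probability at least $c > 0$ (depending only on $k$, $\eps$), the composed transformation preserves a $\wt{\Arm}_k$-event centred at $u$ with only a bounded-factor expansion of radii. Three ingredients enter:
\begin{letlist}
\item The endpoints of the arms are adjacent to $s_0$, which is fixed throughout, since every $U_\ell$ acts strictly above $s_0$; hence the endpoint constraint survives automatically.
\item The openness of each arm is preserved by each \stt\ in the sense of Proposition \ref{simple_star_triangle}, so each open (\resp, dual) arm in $G^0$ produces an open (\resp, dual) arm in $G^K$, broken only if it hits the lateral boundary of the strip.
\item The vertical drift of the arms is controlled by a Markov-chain domination of the kind of Lemma \ref{column_growth_control}: with probability $1 - O(\rho e^{-n})$, every arm stays within $\La_u^\di(\lambda' n)$ for some $\lambda' = \lambda'(\eps)$.
\end{letlist}

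The main obstacle lies in ensuring that the transformed arms remain mutually disjoint (and, in the dual case, that primal and dual arms do not cross incorrectly), since a single \stt\ acting on two nearby arms can merge them or swap their connectivity. This is resolved by invoking the separation estimate that underlies Proposition \ref{exp_equiv}: one may condition on the event that the arms arrive at $\partial \La_u^\di(n)$ at well-separated landing points, which has probability uniformly bounded away from zero. Combined with the drift estimate of (c) and a union bound over the $O(n)$ successive track-exchanges, well-separated arms remain vertex-disjoint throughout the process. Consequently, $G^K$ realises an event of type $\wt{\Arm}_k(N,\lambda'' n)$ for some $\lambda''=\lambda''(\eps)$, and Proposition \ref{exp_equiv}(a),(b) absorbs the constant-factor change of radii, yielding the desired lower bound. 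The upper bound follows by the symmetric construction starting from $G$ and transporting to $G_{\balpha,\bef}$.
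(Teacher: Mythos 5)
Your high-level plan (reduce to modified arm-events via Proposition~\ref{exp_equiv}(c), then transport by track-exchange, then undo the metric distortion via \eqref{exp_equiv_a}--\eqref{exp_equiv_b}) matches the paper's, but several steps in the middle either fail or import machinery that is not needed.

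\textbf{The one-sided hybrid lattice is wrong.} The modified arm-event $\wt\Arm_k^{u,s_0}(N,n)$ involves the constraint \eqref{G601}, i.e.\ $C_{x_i}\subseteq\La_u^\di(3\cd^2 n)$, and the clusters $C_{x_i}$, $C^*_{x_1^*}$ may wander on \emph{both} sides of $s_0$: only the endpoints $x_i,y_i$ are required to lie just below $s_0$, not the arms themselves. The diamond $\La_u^\di(n)$ sits half above and half below $s_0$. Your $G^0$ with $\wt\beta_j=\bef$ only for $j\geq 0$ therefore does \emph{not} agree with $G_{\balpha,\bef}$ throughout $\La_u^\di(n)$, and the identity $\PP_{G^0}[\wt\Arm_k]=\PP_{\balpha,\bef}[\wt\Arm_k]$ fails. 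The paper avoids this by using the symmetric hybrid $\wt\bbeta^m$ of \eqref{G610}, with a band of height $2m$ centred at level $0$, and by applying $U_m$ that pushes tracks simultaneously upward above $s_0$ and downward below it.

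\textbf{The separation argument is not needed and is a conceptual misstep.} You write that a \stt\ acting on two nearby arms may merge them, and propose to resolve this by conditioning on well-separated landing points. In fact, by Proposition~\ref{prop:st-coupling} the coupled \stt\ preserves the full connectivity relation: $x\xlra{}y$ in $\om$ iff $x\xlra{}T(\om)y$, for all vertices. Vertex-disjointness (and dual non-crossing) is therefore automatic under the track-exchanges; the \emph{only} way clusters can break or merge is by touching the edge inserted or removed at the lateral boundary of the strip. This is precisely why the paper works in a strip of half-width $4M$ with $M=\lceil 3\cd^2 n\rceil$ and establishes that the clusters stay inside $\La_u^\di(M+2m)$ at every step: it keeps them away from that boundary.

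\textbf{The probabilistic drift estimate is out of place.} The containment $C_{x_i}(\om^m)\subseteq\La_u^\di(M+2m)$ is \emph{deterministic}, proved in the manner of Lemma~\ref{D_control}: under each track-exchange the cluster can spread laterally by a bounded amount, and one simply takes the strip wide enough from the outset. There is no positive-probability event to exhibit and no random walk domination to invoke; the condition \eqref{G601} was built into $\wt\Arm_k$ precisely to supply the initial a priori bound on cluster diameter that makes this deterministic argument go through. The Markov-chain domination of Lemma~\ref{column_growth_control} is needed in Proposition~\ref{horizontal_transport}, where one must control the height of a path relative to a \emph{fixed} $\la N$; here the radius $M$ is chosen after $n$, so no such estimate is required.

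In short, the correct structure is: symmetric two-sided hybrid $G^m$, deterministic cluster containment in $\La_u^\di(M+2m)$, automatic preservation of connectivity and disjointness by the \stt\ coupling, and conversion between $\Arm_k$ and $\wt\Arm_k$ with the metric-distortion factor $\cd$ absorbed by iterating \eqref{exp_equiv_a}--\eqref{exp_equiv_b}.
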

 
\begin{proof}

  Let $N,n \in \NN$ be picked (later) such that $N$ and $n/N$ are large,
  and write $M = \lceil 3 \cd^2 n \rceil$. 
  For $0\le m \le M$, 
  let $G^m$ be the isoradial square lattice with angle-sequences
  $\wt{\balpha} = (\a_i : - 4M \leq i \leq 4M)$ 
  and $\wt{\bbeta}^m$, with 
  \begin{equation}\label{G610}
    \wt{\b}^m_j =
    \begin{cases}
      \bef     &\text{ if } -m \leq j < m , \\
      \b_{j+m} &\text{ if } -(m + M) \leq j < -m,\\
      \b_{j-m} &\text{ if } m \leq j < m + M,\\
      \bef     &\text{ if } j < -(m + M) \text{ or }  j \geq m + M.\\
    \end{cases}
  \end{equation}
  Thus $G^m$ is obtained from $G$ by taking the horizontal
  tracks $s_j$, $-M \le j < M$,
  splitting them with a band of height $2m$, and filling the rest of space with 
  horizontal tracks having transverse angle $\bef$.
  By the choice of $\bef $, each $G^m$ satisfies \BAC$(\eps)$.
  Moreover, inside $\La_u^\di(M)$, 
  $G^0$ is identical to $G$ and $G^M$ is identical to $G_{\balpha, \bef }$. 
  
  For $0 \leq m < M$, let
  \begin{align*} 
    & U_m = 
    ( \Sigma_{m+1} \circ \dots \circ \Sigma_{m + M})  \circ
    ( \Sigma_{-(m+1)} \circ \dots \circ \Sigma_{-(m + M)}),
  \end{align*}
  where the $\Si_j$ are given in Section \ref{sec:xch}.
  Under 
  $U_m$, the track at level $m+ M$ is moved to the position 
  directly above that at level $m-1$,
  and the level $-(m+M+1)$ track below the level $-m$ track.
  We have that
  \begin{align*} 
    U_m (G^m) = G^{m+1}.
  \end{align*}

  Let $\omega^0$ be a configuration on $G^0$ such that $\wt{\Arm}^{u, s_0}_k(N, n)$ occurs.
  Set $j = 1$ when $k = 1$, and $j = k/2$ when $k \geq2$. 
  There exist vertices $x_1,\ldots, x_j$, $y_1, \ldots,y_j$ 
  and, when $k = 2$, $x_1^*$, $ y_1^*$,
  all lying in the set $\{v_{m,0}: m \in \ZZ\}$ of vertices of $G^\di$, such that:
  \begin{letlist}
  \item   $x_i \xlra{G^0, \omega^0} y_i$ and $x_i \nxlra{\ \ G^0, \omega^0} x_{i'}$ for $i \neq i'$,
  \item   $x^*_1 \dxlra{G^0, \omega^0} y^*_1$, when $k = 2$,
  \item   $d^\di(v_{0,0}, x_i) \leq N$, $d^\di(v_{0,0}, y_i) > n$, 
  \item 
    $d^\di(v_{0,0}, x_1^*) \leq N$, $d^\di(v_{0,0}, y_1^*) > n$, when $k=2$,
  \item   $C_{x_i} \subseteq \La_u^\di(M)$ and, when $k=2$, $C^*_{x_1^*} \subseteq \La_u^\di(M)$.
  \end{letlist}
  
  As we apply $U_{M-1} \circ \dots \circ U_{0}$ to $(G^0, \omega^0)$, 
  the images of paths from each of $x_i$, $y_i$, and $x^*_1, y^*_1$ 
  retain their starting points. 
  
Each $\La_u^\di(r)$ has a diamond shape. 
By an argument similar to that of Lemma \ref{D_control}, for $0 \le m\le M$,
  \begin{align*}
    C_{x_i}(\om^m)     \subseteq \La_u^\di(M + 2m), \quad
    C^*_{x_1^*}(\om^m) \subseteq \La_u^\di(M + 2m).
  \end{align*}
  Moreover, since $C_{x_i}(\om^m)$ and $C^*_{x_1^*}(\om^m)$ 
  do not extend to the left/right boundaries of $G^m$, 
  these clusters neither break nor merge with one another. 
  Therefore,
\begin{letlist}
  \item   $x_i \xlra{G^M, \omega^M} y_i$ and $x_i \nxlra{\ \ G^M, \omega^M} x_{i'}$ for $i \neq i'$,
  \item   $x_1^* \dxlra{G^M, \omega^M} y_1^*$, when $k = 2$,
  \end{letlist}
  so that  $\omega^M \in A_k(\cd N,\cd^{-1}n)$. 
(A related discussion may be found in \cite[Sect.\ 3.2]{GM2}.)
  In conclusion, 
  there exists $c_3>0$ such that
  \begin{align*}
    \PP_{G} [ \Arm_k (N,n) ]  
    &\leq c_3 \PP_{G^0} [ \wt{\Arm}_k (N,n) ]  \qquad\quad \text{ by \eqref{exp_equiv_c}}\\
    &\leq c_3 \PP_{G^M} [ \Arm_k (\cd N, \cd^{-1} n) ].
  \end{align*}

  Since the intersection of any $G^m$ with $\Ann(\cd N,\cd^{-1}n)$ is contained in
  $\La_u^\di(M)$, we have by the discussion after \eqref{G610} that there
exists $c_4>0$ with
  \begin{align*}
    \PP_{G} [ \Arm_k (N,n) ] 
    &\le c_3 \PP_{\balpha, \bef } [ \Arm_k (\cd N, \cd^{-1} n) ] \\
    &\leq c_3 c_4 \PP_{\balpha, \bef } [ \Arm_k (N, n) ],
  \end{align*}
  by \eqref{exp_equiv_a} and  \eqref{exp_equiv_b}, iterated.
  The second inequality of Lemma \ref{exp_transport1} is proved. 

  Turning to the first inequality, 
  let $\omega^M$ be a configuration on $G^M$ such that $\wt{\Arm}_k(N, n)$ occurs
  (the arm event is defined in terms of $v_{0,0}$ and the horizontal track at level $0$).
  It may be seen as above that 
   $\omega^0 = U_{M-1} \circ \dots \circ U_{0}(\omega^M)$ 
  is a configuration on $G^0$ contained in $\Arm_k(\cd N , \cd^{-1} n)$. 
 Furthermore,
  \begin{align*}
    \PP_{\balpha, \bef } [ \Arm_k (N,n) ]  
    \leq c_3 \PP_{G^M} [ \wt{\Arm}_k (N,n) ]
    \leq c_3 c_4 \PP_{G} [ \Arm_k (N, n) ].
  \end{align*}
  The proof is complete.
\end{proof}

\begin{cor}\label{exp_transport1_cor}
  There exist constants $c_i > 0$ such that, for $n > N$ sufficiently large
  and any isoradial square lattice $G_{\balpha, \bbeta} \in \sG(\eps, I)$, 
  \begin{align}
    c_1 \PP_{0,\pi/2}[\Arm_k(N, n)] \leq 
    \PP_{\balpha, \bbeta}[\Arm_k(N, n)] \leq 
    c_2 \PP_{0,\pi/2}[\Arm_k(N, n)].
    \label{exp_transport1_eq}
  \end{align}
\end{cor}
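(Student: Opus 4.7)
The approach is to apply Lemma \ref{exp_transport1} iteratively, combined with $90^\circ$ rotations of the plane about the centre $v_{0,0}$, in order to reduce the arbitrary isoradial square lattice $G_{\balpha,\bbeta}\in\sG(\eps,I)$ to the homogeneous square lattice $G_{0,\pi/2}$ through a short chain of $\asymp$-comparisons.

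First I would apply Lemma \ref{exp_transport1} with $\bef=\b_0$. The hypothesis \eqref{bounded_angles} for the pair $(\balpha,(\b_0))$ is inherited from the \BAC$(\eps)$ assumption on $(\balpha,\bbeta)$, so this yields
\begin{align*}
\PP_{\balpha,\bbeta}[\Arm_k(N,n)] \asymp \PP_{\balpha,(\b_0)}[\Arm_k(N,n)].
\end{align*}
Next I would use the horizontal/vertical symmetry of the set-up. Rotating the plane by $90^\circ$ about $v_{0,0}$ sends $G_{\balpha,(\b_0)}$ to an isoradial square lattice $G'$ whose new \emph{horizontal} tracks inherit their angles from the old vertical tracks (hence form a non-constant sequence derived from $\balpha$), while the new vertical tracks have a constant transverse angle derived from $\b_0$. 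Since the square annulus $\Ann^{v_{0,0}}(N,n)$ and the canonical probability measure are invariant under this rotation, arm probabilities are preserved exactly. A second application of Lemma \ref{exp_transport1} to $G'$, replacing its non-constant sequence by an appropriate constant, and then rotating back, gives
\begin{align*}
\PP_{\balpha,(\b_0)}[\Arm_k(N,n)] \asymp \PP_{(\a_0),(\b_0)}[\Arm_k(N,n)].
\end{align*}

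The graph $G_{(\a_0),(\b_0)}$ now has both transverse-angle sequences constant, and is thus a rhombic embedding of $\ZZ^2$ with all rhombi congruent. A third application of Lemma \ref{exp_transport1} replaces $(\b_0)$ by $(\a_0+\pi/2)$; the bounded-angle hypothesis reduces to $\pi/2\in[\eps,\pi-\eps]$, which holds since we may assume $\eps\le\pi/2$ (otherwise \BAC$(\eps)$ forces all internal angles of faces of $G^\di$ to equal $\pi/2$, and $G_{\balpha,\bbeta}$ already coincides with $G_{0,\pi/2}$ up to rotation). This yields
\begin{align*}
\PP_{(\a_0),(\b_0)}[\Arm_k(N,n)] \asymp \PP_{(\a_0),(\a_0+\pi/2)}[\Arm_k(N,n)].
\end{align*}
Finally, $G_{(\a_0),(\a_0+\pi/2)}$ is the rigid rotation by $\a_0$ of $G_{0,\pi/2}$. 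Under this rotation the axis-aligned annulus $\Ann^{v_{0,0}}(N,n)$ is sent to a rotated square annulus, and a standard argument using the \bxp\ of $G_{0,\pi/2}$ (Corollary \ref{bxp_square}), the Harris--FKG inequality, and the equivalence \eqref{exp_equiv_c} shows that arm probabilities on the two are comparable, producing
\begin{align*}
\PP_{(\a_0),(\a_0+\pi/2)}[\Arm_k(N,n)] \asymp \PP_{0,\pi/2}[\Arm_k(N,n)].
\end{align*}

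Chaining the four $\asymp$-relations gives \eqref{exp_transport1_eq}. The argument is not deep — the main technical point is bookkeeping: at each intermediate step one must verify that the pair of angle-sequences continues to satisfy \BAC$(\eps')$ for some $\eps'>0$ depending only on $\eps$. This follows automatically from the explicit choices $\b_0,\a_0,\a_0+\pi/2$ together with the original \BAC$(\eps)$ assumption. All resulting multiplicative constants depend only on $\eps$ (and $k$), as required by the corollary.
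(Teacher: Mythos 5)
Your proposal is correct and follows essentially the same route as the paper: iterated applications of Lemma \ref{exp_transport1}, using the invariance of the square annulus $\Ann(N,n)$ under $90^\circ$ rotation to swap the roles of $\balpha$ and $\bbeta$. The paper's proof is terser: it reduces $\bbeta$ to the constant $\b_0$, then appeals to ``the type analysed above'' by tacitly exchanging the roles of the two track-families (i.e.\ the $90^\circ$ rotation), and applies Lemma \ref{exp_transport1} once more with $\bef=\a_0+\pi/2$; the final comparison between $G_{(\a_0),(\a_0+\pi/2)}$ and $G_{0,\pi/2}$ is likewise left implicit. You spell out both of these rotation steps explicitly, which is good. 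One small inefficiency: your step 2 and step 3 could have been merged into a single application of Lemma \ref{exp_transport1} by choosing $\bef=\a_0'+\pi/2$ directly in the rotated frame (as the paper does), saving a comparison; this costs nothing beyond an extra multiplicative constant and is not an error. Your handling of the final step via \eqref{exp_equiv_c} is the right way to make rigorous the comparison of arm probabilities for $G_{(\a_0),(\a_0+\pi/2)}$ and $G_{0,\pi/2}$: pass to the modified arm-event $\wt A_k$, which is defined in terms of the graph metric and hence invariant under the rigid rotation by $\a_0$ that identifies the two lattices, then pass back using \eqref{exp_equiv_c} on each side. Your aside about $\eps\le\pi/2$ is slightly off (if $\eps>\pi/2$ then $[\eps,\pi-\eps]=\varnothing$ and the class $\sG(\eps,I)$ is empty, so the statement is vacuous rather than ``already $G_{0,\pi/2}$''), but this does not affect the argument.
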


\begin{proof}
  If $\balpha$ is a constant vector $(\a_0)$, \eqref{exp_transport1_eq} follows 
  by Lemma \ref{exp_transport1} with $\bef  = \a_0 + \pi/2$. 
  
  For $\balpha$ non-constant, we apply  Lemma \ref{exp_transport1} 
  with $\bef  = \b_0$, thus bounding
  the arm-event probabilities for $G_{\balpha,\bbeta}$ by those 
  for $G_{\balpha,\b_0}$.
  Now, $G_{\balpha,\b_0}$ is of the type analysed above, and the conclusion follows. 
\end{proof}
  
\subsection{Proof of Proposition \ref{exp_transport}: The general case}\label{sec:proof_arm2}

Let $G \in \sG(\eps,I)$, and 
let $(s_j:j\in\ZZ)$  and $(t_i:i\in\ZZ)$ be two families of tracks forming a square grid of $G$,
duly oriented.
Write $(r_i : i \in \ZZ)$ for 
the sequence of all tracks other than the $s_j$, 
indexed and oriented according to their intersections with $s_0$, with $r_0 = t_0$,
and including the $t_i$ in increasing order. 
Let $\b_j$ be the transverse angle of $s_j$, 
and $\pi+\alpha_i$ that of $r_i$. 
Since each $r_i$ intersects each $s_j$, 
the vectors $\balpha = (\alpha_i : i\in \ZZ)$,
$\bbeta = (\beta_j:j\in \ZZ)$ satisfy
\eqref{bounded_angles}, and hence $G_{\balpha, \bbeta}$ 
is an isoradial square lattice satisfying \BAC$(\eps)$.
As in Lemma \ref{grid_slide}, we may retain the labelling of tracks throughout the proof.
Let $u$ be the vertex adjacent to $s_0$ and $t_0$, 
below and \resp\ left of these tracks. 
All arm-events in the following  are centred at the vertex $u$ 
and expressed in terms of the track $s_0$. 

\begin{lemma}\label{exp_transport2}
  There exist constants $c_1, c_2 >0$ such that, for $n > N$ sufficiently large, 
  \[
  c_1 \PP_{\balpha, \bbeta}[\Arm_k (N, n)] \leq 
  \PP_G[\Arm_k(N, n)] \leq 
  c_2 \PP_{\balpha, \bbeta}[\Arm_k(N, n)].
  \]
\end{lemma}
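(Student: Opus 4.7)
The plan is to adapt the strategy used in Section \ref{sec:proof2} for the proof of Theorem \ref{main} in the general case, but now applied to arm-events via the modified arm-events $\wt{\Arm}_k$ of Section \ref{sec:alt_def} and the comparisons of Proposition \ref{exp_equiv}. Fix $n$ large and set $M = \lceil c_0 n\rceil$ for a constant $c_0 = c_0(\eps, I)$ chosen large enough that the ball $\La_u^\di(3\cd^2 n)$, which by condition \eqref{G601} contains the open clusters of all arm-endpoints involved in any modified arm-event $\wt\Arm_k^{u,s_0}(N, n)$, lies well inside the domain $\Dom(t_{-4M}, t_{4M}; s_{-M}, s_M)$.

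By iterating Lemma \ref{grid_slide} (and using its symmetric analogue for tracks below $s_0$), there exists a finite sequence of star--triangle transformations $T_1, \dots, T_K$, each acting strictly inside $\Dom(t_{-4M}, t_{4M}; s_{-M}, s_M)$ and none affecting an edge incident to $s_0$, such that in $G^K := T_K\circ\dots\circ T_1(G)$ the tracks $s_{-M}, s_{-M+1}, \dots, s_M$ are all adjacent between $t_{-4M}$ and $t_{4M}$. Consequently $G^K$ coincides with the isoradial square lattice $G_{\balpha, \bbeta}$ throughout this domain. The coupling of Proposition \ref{prop:st-coupling}, applied sequentially to the $T_k$, preserves both open primal and open dual connections between vertices lying outside the hexagonal face of each individual transformation.

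The heart of the proof is then the claim that modified arm-events are transported between $G$ and $G^K$ with only a constant-factor adjustment of radii. More precisely, for an appropriate constant $c' = c'(\eps, I) > 0$,
\begin{align*}
\PP_G[\wt\Arm_k^{u,s_0}(N, n)] \ &\eqv\ \PP_{G^K}[\wt\Arm_k^{u,s_0}(c' N, (c')^{-1} n)] \\
& =\ \PP_{\balpha, \bbeta}[\wt\Arm_k^{u,s_0}(c' N, (c')^{-1} n)],
\end{align*}
the final equality holding because $G^K$ and $G_{\balpha,\bbeta}$ agree on the relevant region. Applying Proposition \ref{exp_equiv} (parts \eqref{exp_equiv_a}, \eqref{exp_equiv_b}, \eqref{exp_equiv_c}) to convert $\wt\Arm_k$ to $\Arm_k$ and to absorb the $c'$ adjustment of radii, we obtain the two-sided inequality of the lemma. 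The reverse transport from $G^K$ back to $G$ is run identically with the inverses $S_k$ of the $T_k$.

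The main obstacle is quantifying the transport in the third step: controlling the cumulative displacement of arm-endpoints under the sequence $T_1, \dots, T_K$. Unlike Lemma \ref{exp_transport1}, where the structured families $U_m$ permit the clean deterministic envelope of Lemma \ref{D_control}, here the $T_k$ supplied by Lemma \ref{grid_slide} act in an order dictated by the geometry of $\sT(G)$, and $K$ may be comparable to $M^2$. The key observation is that arm-endpoints $x_i, y_i, x_1^*, y_1^*$ remain anchored to $s_0$ throughout (since $s_0$ is untouched by the $T_k$), and that an individual endpoint can only be displaced by an STT whose hexagonal face is incident to it. A geometric argument based on \SGP$(I)$ and \BAC$(\eps)$ shows that the cumulative lateral displacement of each endpoint along $s_0$ is bounded by a constant depending only on $\eps$ and $I$, which is what allows the radii to be absorbed into $c'$.
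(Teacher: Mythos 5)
Your overall strategy is the same as the paper's: use Lemma~\ref{grid_slide} (above and below $s_0$) to build a finite sequence of star--triangle transformations taking $G$ to a graph that coincides with $G_{\balpha,\bbeta}$ on a large neighbourhood of $u$, transport the modified arm-event through this sequence using the fact that the $x_i,y_i,x_1^*,y_1^*$ sit on $s_0$ which is never touched, and then chain with Proposition~\ref{exp_equiv} to return to the unmodified events $\Arm_k$ with adjusted radii. That is exactly the structure of the paper's argument.

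Your ``main obstacle'' paragraph, however, is a misdirection. You correctly observe that the endpoints ``remain anchored to $s_0$ throughout (since $s_0$ is untouched),'' but then describe a need to control ``the cumulative lateral displacement of each endpoint along $s_0$,'' assert (without giving it) ``a geometric argument based on \SGP$(I)$ and \BAC$(\eps)$'' bounding this displacement by a constant, and worry that $K$ may be of order $M^2$. None of this is needed: since no transformation in the sequence involves a rhombus of $s_0$, the Euclidean positions of all vertices adjacent to $s_0$ are \emph{exactly fixed} --- the displacement is zero, not merely bounded. What does need care (and what condition~\eqref{G601} in the definition of $\wt\Arm_k$ is designed to guarantee) is that the clusters $C_{x_i}$, $C^*_{x_1^*}$ stay away from the lateral boundaries of the strip, so that the transformations neither break nor merge them; the number $K$ of transformations is irrelevant and requires no estimate. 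So your argument would stand once you replace the unproved displacement bound with the trivial observation that the endpoints do not move at all. Two further small imprecisions: the relevant statement is that no $T_k$ involves a rhombus of $s_0$ (rather than ``no edge incident to $s_0$''), and the claim that the $T_k$ act ``strictly inside $\Dom(t_{-4M},t_{4M};s_{-M},s_M)$'' is neither established by Lemma~\ref{grid_slide} (the black points it manipulates can lie outside any fixed lateral window) nor needed for the argument; what matters is only that the resulting graph agrees with $G_{\balpha,\bbeta}$ on $\La_u^\di(M)\supseteq B_n+u$, which is what the lemma delivers.
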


This lemma, together with Corollary \ref{exp_transport1_cor}, 
implies Proposition \ref{exp_transport} for arm events centred at $u$. 
By the \sgp, any vertex is within bounded distance of 
one of the tracks $(s_j : j \in \ZZ)$. 
This allows us to extend the conclusion to arm events centred at 
any vertex. 

\begin{proof}
  Let $n \in \NN$ and $M = \lceil \cd n \rceil $.
  By Lemma \ref{grid_slide}, applied in two stages above and below $s_0$, 
  there exists a finite sequence $R^+$ of star--triangle transformations
  such that, in $G^M := R^+(G)$, 
  the tracks $s_{-M},\dots, s_M$ are adjacent between $t_{-M}$ and $t_M$. 
  Moreover, no \stt\ in $R^+$ involves a rhombus lying in $s_0$. 
  The sequence $R^+$ has an inverse sequence denoted $R^-$.
  Note that $G^M$ agrees with $G_{\balpha, \bbeta}$ inside $B_n + u \subseteq \La_u^\di(M)$. 

  Let $\omega$ be a configuration on $G$ belonging to $\wt{\Arm}_k (N, n)$,
  and let vertices $x_i$, $y_i$ be given accordingly. 
  Consider the image configuration $\omega^M = R^+ (\omega^0)$ on $G^M$. 
  By considering the action of the transformation $R^+$, we may see that
  \begin{letlist}
  \item   $x_i   \xlra{G^M, \omega^M}   y_i$   and $x_i \nxlra{\ \ G^M, \omega^M} x_{i'}$ for $i \neq i'$,
  \item   $x_1^* \dxlra{G^M, \omega^M} y_1^*$, when $k = 2$.
  \end{letlist}
  Taken together with \eqref{di_euclid},  
  this implies that $\om^M \in \Arm_k(\cd N, \cd^{-1} n)$.
  Therefore, there exist $c_i>0$ such that
  \begin{align*}
    \PP_{G} [ \Arm_k (N,n) ]  
    &\leq c_3 \PP_{G} [ \wt{\Arm}_k (N, n) ] \quad && \text{ by \eqref{exp_equiv_c}}\\
    &\leq c_3 \PP_{G^M} [ \Arm_k (\cd N, \cd^{-1} n) ] \\
    &= c_3 \PP_{\balpha, \bbeta} [ \Arm_k (\cd N, \cd^{-1} n) ] \\
    &\leq c_3 c_4 \PP_{\balpha, \bbeta} [ \Arm_k (N, n) ] \quad && \text{ by \eqref{exp_equiv_a} and \eqref{exp_equiv_b}},
  \end{align*}
  and the second inequality of the lemma is proved.

  Conversely, let $\omega^M$ be a configuration on $G^M$ belonging to $\wt{\Arm}_k (N, n)$.
  By applying the inverse transformation, we obtain the configuration 
  $\omega =  R^- (\omega^M)$ on $G$. As above,
  \begin{align*}
    \PP_{\balpha, \bbeta}  [ \Arm_k (N,n) ]  
    &= \PP_{G^M}  [ \Arm_k (N,n) ]   \\
    &\leq c_3 \PP_{G^M} [ \wt{\Arm}_k (N, n) ] \quad && \text{ by \eqref{exp_equiv_c}}\\
    &\leq c_3 \PP_{G} [ \Arm_k (\cd N, \cd^{-1} n) ] \\
    &\leq c_3 c_4 \PP_{G} [ \Arm_k (N, n) ] \quad && \text{ by \eqref{exp_equiv_a} and \eqref{exp_equiv_b}}.
  \end{align*}
  This concludes the proof of the first inequality of the lemma.
\end{proof}

\subsection{Proof of Proposition \ref{exp_equiv}}\label{sec:sep_thm}

This section is devoted to the proof of Proposition \ref{exp_equiv},
and is not otherwise relevant to the rest of the paper. 
The two main ingredients of the proof are the separation theorem (Theorem \ref{separation}) and 
the equivalence of metrics, \eqref{di_euclid}.

\subsubsection{Separation theorem}
The so-called separation theorem was proved by Kesten in \cite{Kesten87}
(and elaborated  in \cite{Nolin}) in the context of homogeneous site percolation. 
The theorem is set more generally in \cite[Thm 3.5]{GM2}, and this gives rise
to the current version, Theorem \ref{separation}, to be used in the proof of Proposition \ref{exp_equiv}.
The formal statement of Theorem \ref{separation} requires some notation from 
\cite{GM2,Kesten87,Nolin} which, for completeness, is presented below.

Let $G \in \sG(\eps,I)$.
By Theorem \ref{main} (see also \eqref{G2231}), $\PP_G$ satisfies the \bxp\ 
\BXP$(\de)$ with $\de=\de(\eps,I)$.
Let $\sigma$ be  a colour sequence of length $k$. The event  $\Arm_\sigma(N,n)$
requires the existence of a number of `arms' crossing an annulus.
Roughly speaking, the separation theorem implies that
the extremities of these arms may be taken to be distant from one another.  

\begin{figure}[htb]
  \begin{center}
    \includegraphics[width=0.8\textwidth]{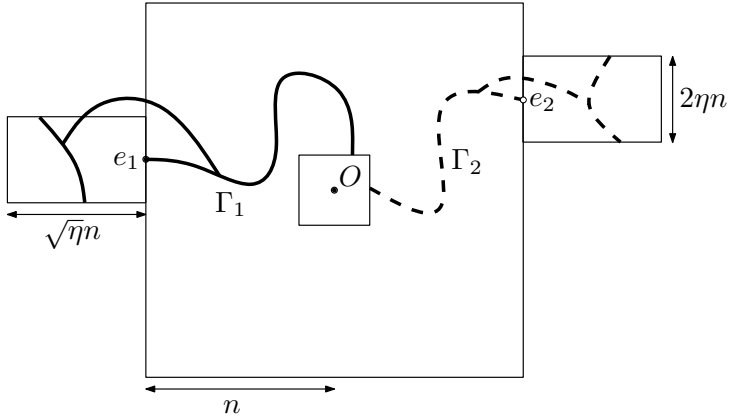}
  \end{center}
  \caption{A primal $\eta$-exterior-fence $\Ga_1$ with exterior endpoint $e_1$,
    and a dual $\eta$-exterior-fence $\Ga_2$.}
  \label{fig:fence}
\end{figure}

We shall consider open and (dual) open$^*$ crossings between 
the interior and exterior boundaries of $\Ann(N,n)$. 
For clarity, we concentrate first on the behaviour of crossings at their exterior endpoints.
Let $\eta\in(0,1)$. A primal (\resp, dual) \emph{$\eta$-exterior-fence}
is a set $\Ga$ of connected open (\resp, open$^*$) paths comprising the union of:
\begin{romlist}
\item a crossing of $\Ann(N,n)$ from its interior to its exterior
  boundary, with exterior endpoint denoted 
  $\ext(\Ga)$,
\end{romlist}
together with certain further paths which we describe thus under the assumption that
$\ext(\Ga)=(n,y)$ is on the right side of $\pd B_n$:
\begin{romlist}
\item[(ii)] a vertical crossing of the box 
  $[n , (1+\sqrt{\eta})n] \times [y - \eta n, y + \eta n]$,
\item[(iii)] a connection between the above two crossings, 
  contained in $\ext(\Ga) +B_{\sqrt{\eta}n}$.
\end{romlist}
If $\ext(\Ga)$ is on a different side of $\pd B_n$, the event of condition (ii) is replaced by
an appropriately rotated and translated event.  
This definition is illustrated in Figure \ref{fig:fence}.

\begin{figure}[htb]
  \begin{center}
    \includegraphics[width=0.7\textwidth]{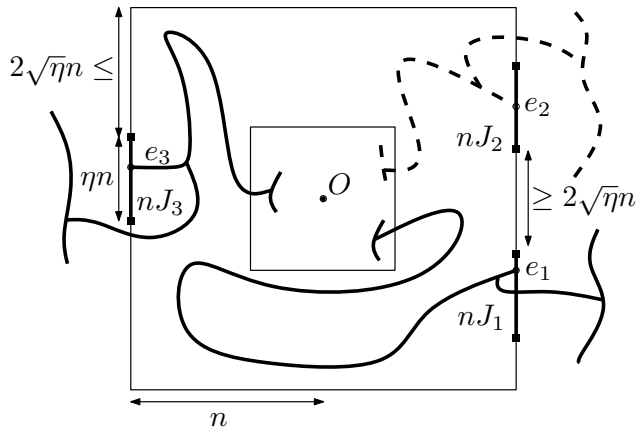}
  \end{center}
  \caption{The event $\Arm_{\sigma}^{J,J} (N,n)$ with 
    $\sigma = (1,0,1)$ and $\eta$-landing-sequence $J$. 
    Each crossing $\Gamma_i$ is an $\eta$-fence 
    with exterior endpoint $e_i \in n J_i$.}
  \label{fig:separated_fences}
\end{figure}

One may similarly define an $\eta$-\emph{interior}-fence by considering the
behaviour of the crossing near its interior endpoint.
We introduce also the concept of a primal (\resp, dual)
$\eta$-\emph{fence}; this is a union of
an open (\resp, open$^*$)  crossing
of $\Ann(N,n)$ together with further paths in the vicinities
of both interior and exterior endpoints along the lines of the above definitions. 

An \emph{$\eta$-landing-sequence} is a sequence of closed sub-intervals 
$J =\linebreak  (J_i : i =1,2, \dots, k)$ of $\pd B_1$,
taken in anticlockwise order,
such that each $J_i$ has length $\eta$, and
the minimal distance between any two intervals, and 
between any interval and a corner of $B_1$, 
is greater than $2\sqrt\eta$. 
We shall assume that
\begin{equation}\label{eta}
  0 < k(\eta+2\sqrt\eta) < 8,
\end{equation}
so that $\eta$-landing-sequences exist.

Let $\eta,\eta'$ satisfy \eqref{eta}, and let $J$ (\resp, $J'$) be an $\eta$-landing-sequence
(\resp, $\eta'$-landing-sequence). 
Write  $\Arm_{\sigma}^{J,J'} (N,n)$
for the event that there exists a sequence of 
$\eta$-fences  $(\Gamma_i : i =1,2, \dots, k)$
in the annulus $\Ann(N,n)$, 
with colours  prescribed by $\sigma$, such that, for all $i$,
the interior (\resp, exterior) endpoint  of $\Ga_i$ lies in $N J_i$ (\resp, $n J'_i$).
These definitions are illustrated in Figure \ref{fig:separated_fences}.

We now state the separation theorem. 
The proof is omitted, and may be constructed via careful readings 
of the appropriate sections of \cite{Kesten87,Nolin}.  

\begin{thm}[Separation theorem]\label{separation}
  For $\eta_0>0$ and a colour sequence $\sigma$,
  there exist constants $c > 0$ and $N_1 \in \NN$ 
  depending only on $\eta_0$, $\sigma$, $\eps$, and $I$ such that:
  for all $\eta,\eta' \ge \eta_0$ satisfying \eqref{eta}, 
  all $\eta$-landing-sequences $J$ and $\eta'$-landing-sequences $J'$,
  and all $N \geq N_1$ and $n \geq 2N$, we have
  \begin{align*}
    \PP_G [ \Arm_{\sigma}^{J,J'} (N,n) ] \geq 
    c \PP_G [ \Arm_{\sigma} (N,n) ].
  \end{align*}
\end{thm}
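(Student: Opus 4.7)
The plan is to follow the scheme of Kesten \cite{Kesten87} and Nolin \cite{Nolin}, replacing the RSW/symmetry inputs of those works by the \bxp\ of Theorem \ref{main}, which holds uniformly across $\sG(\eps,I)$ with constant $\de = \de(\eps,I)$ by \eqref{G2231}. Throughout, all rectangular crossings invoked via Harris--FKG will have aspect-ratios bounded in terms of $\eta_0$ only, so that BXP supplies positive lower bounds depending on $\eta_0,\eps,I$ alone.

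Step 1 is a single-arm extension lemma: given an open (\resp, open$^*$) crossing $\gamma$ of $\Ann(N,n)$ whose exterior endpoint may lie anywhere on $\pd B_n$, the conditional probability that $\gamma$ extends to an $\eta$-exterior-fence with $\ext(\gamma) \in n J_i$ for a prescribed interval $J_i$ is bounded below by a positive constant $c_0(\eta_0,\eps,I)$. To build this, I would chain together a bounded number of BXP-controlled rectangles running along $\pd B_n$ from the (a priori unknown) endpoint of $\gamma$ to the prescribed landing zone, together with the vertical crossing of condition (ii) and the connection of condition (iii) in the fence definition. The separation of each $J_i$ from corners of $B_1$ by at least $2\sqrt\eta$ ensures that all auxiliary rectangles sit away from corners (where BXP does not immediately give crossings at arbitrary orientations), and FKG combines the crossings into a single positive lower bound. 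A symmetric statement holds for the interior endpoint.

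Step 2 is the multi-arm extension, which requires arm separation. For a bichromatic colour sequence $\sigma$ I would proceed by successively conditioning on the outermost interface in $\Ann(N,n)$ between the clusters of two consecutive arms of opposite colour, then applying the single-arm extension of Step 1 in the complementary region, where the configuration is unconditioned. For monochromatic blocks of $\sigma$, the same exploration applied to outermost clusters works. Iterating over the $k$ arms, at each stage using BXP uniform in scale and location, and multiplying the resulting conditional lower bounds, yields $\PP_G[\Arm_\sigma^{J,J'}(N,n)] \ge c \PP_G[\Arm_\sigma(N,n)]$ with $c=c(\eta_0,\sigma,\eps,I)$.

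The main obstacle will be rewriting the arm-separation argument in a form that does not appeal to translation or reflection symmetry. In \cite{Kesten87,Nolin} such symmetries are used implicitly to argue that arms are typically well-separated; in our setting the same conclusion must be extracted purely from BXP plus FKG, with all constants tracked in terms of $\eps$ and $I$. A subsidiary technical point is that the exploration of outermost interfaces must be performed so that conditioning reveals only edges outside the region in which extensions are subsequently built; this is standard but requires careful bookkeeping in the non-lattice setting, and accounts for the rather rigid geometric constraints built into the definitions of landing-sequence and fence.
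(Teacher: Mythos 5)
The paper does not give a proof of this theorem: it states, immediately before the statement, that ``The proof is omitted, and may be constructed via careful readings of the appropriate sections of \cite{Kesten87,Nolin}'', and cites the analogous Theorem 3.5 of \cite{GM2} (the authors' companion paper on inhomogeneous lattices) as a template. Your proposal is therefore a sketch of what the authors deliberately leave implicit, and the high-level scheme --- uniform \bxp\ from Theorem~\ref{main} playing the role of RSW, Harris--FKG chaining of auxiliary box-crossings of bounded aspect ratio to build $\eta$-fences and to steer arm endpoints into landing zones, and exploration of outermost paths so that extensions live in unconditioned regions --- is a faithful description of the Kesten--Nolin machinery adapted to this setting and is essentially the route the authors intend.

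Two points of emphasis, however. First, your framing of the ``main obstacle'' as the removal of implicit translation/reflection symmetry from \cite{Kesten87,Nolin} overstates the difficulty: once box-crossing estimates hold uniformly for the primal and dual measures (as Theorem~\ref{main} together with \eqref{G455}--\eqref{G456} supplies, with $\de=\de(\eps,I)$), the multi-scale separation argument is already symmetry-free. What does require real care is precisely your ``subsidiary technical point'': the construction at each dyadic scale of auxiliary crossings that (i) avoid corners of the square annulus, (ii) avoid regions already explored, and (iii) have aspect ratios bounded in terms of $\eta_0$ alone, and then the verification that the accumulated constants depend only on $\eta_0,\sigma,\eps,I$; that bookkeeping should be the centre of the write-up, not an aside. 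Second, your treatment of monochromatic blocks (``the same exploration applied to outermost clusters works'') is too quick: there is no interface to explore between two disjoint arms of the same colour, and Kesten's argument there relies on the topological fact that two disjoint open arms in an annulus force a dual arm between them, so that one may insert virtual dual crossings and reduce to the polychromatic case. For the applications in this paper (the one-arm and alternating arm events of Proposition~\ref{exp_equiv}) this case does not bite, but as you have stated the theorem for general $\sigma$, the device must be imported explicitly.
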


\subsubsection{Proof of Proposition \ref{exp_equiv}}

Inequalities \eqref{exp_equiv_a} and \eqref{exp_equiv_b} 
follow from Theorem \ref{separation} and the \bxp. 
The proof, omitted here, is essentially that of \cite[Prop.\ 12]{Nolin}, and
uses the extension of paths by judiciously positioned box-crossings.
We turn therefore to the proof of \eqref{exp_equiv_c}. 

Consider the first inequality of \eqref{exp_equiv_c} (the second is easier to prove).
The idea is as follows.
Suppose that $A_k(N,n)$ occurs (together with some additional assumptions). 
One may construct a bounded
number of open or open$^*$ box-crossings in order
to obtain $\wt A_k(N,n)$. These two arm-events are given in terms of annuli
defined via different metrics --- the Euclidean metric and $d^\di$, \resp\ ---
but the radii of these annuli are comparable by \eqref{di_euclid}.

Assume $n/N \ge 2$, and let 
\begin{equation}\label{G604}
  M = \cd^{-1} N,\quad m = \cd n,
\end{equation}
with $\cd$ as in \eqref{di_euclid}.  
Let $k \in \{1,2,4,\dots\}$,  $\sigma = (1,0,1,0, \ldots)$,
and consider the corresponding arm-event $\Arm_k(M,m)$. 
All constants in the following proof may depend on $k$, $\eps$, and $I$
but, unless otherwise specified, on nothing else.   
All arm-events that follow are assumed centred at the vertex $u$ adjacent to a track $s$.
By translation, we may assume that $u$ is the origin of $\RR^2$.
In order to gain some control over the geometry of $s$,
we may assume, without loss of generality,  that its transverse angle $\b$  
satisfies $\b\in[\frac14 \pi, \frac34 \pi]$.
    
Let $\eta=\eta(k)>0$ satisfy \eqref{eta}, and let
$J$ be an $\eta$-landing sequence of length $k$, 
entirely contained in $\{1\} \times [0,1]$, 
with $J_1$ being the lowest interval. 
Henceforth assume $M  \geq N_1$, where $N_1$ is given 
in Theorem \ref{separation} with $\eta_0=\eta$. 
By that theorem, there exists $c_0>0$ such that
\begin{align}\label{G603}
  \PP_G[ \Arm_k^{J,J} (M,m) ] \geq c_0 \PP_G \left[ \Arm_k (M,m) \right].
\end{align}

Let $(M, v_i)$ be the lower endpoint of $MJ_i$, and $(M, w_i)$ the upper. 
Let $H_M$ be the event that,
for  $i \in \{1, 2,\dots,k\}$,  the following crossings of colour $\sigma_i$ exist:
\begin{letlist}
\item a horizontal crossing of
  $[-w_i,M]\times [v_i,w_i] $,
\item a vertical crossing of 
  $[-w_i,-v_i]\times [-w_i, w_i] $, 
\item for $i$ odd,
  a horizontal crossing of
  $[-w_i,w_i]\times [-w_i,-v_i] $,
\item for $i$ even, 
  a horizontal crossing of
  $[-w_i,w_k]\times [-w_i,-v_i] $.
\end{letlist}  
If $k \geq 4$, 
we require also an open$^*$ vertical crossing of $[v_k,w_k]\times [-w_k,0] $.
The event $H_M$  depends only on the configuration inside $B_M$,
and is illustrated 
in Figure \ref{fig:arm_equiv}.  
 
\begin{figure}[htb]
  \centering
  \includegraphics[width=1.0\textwidth]{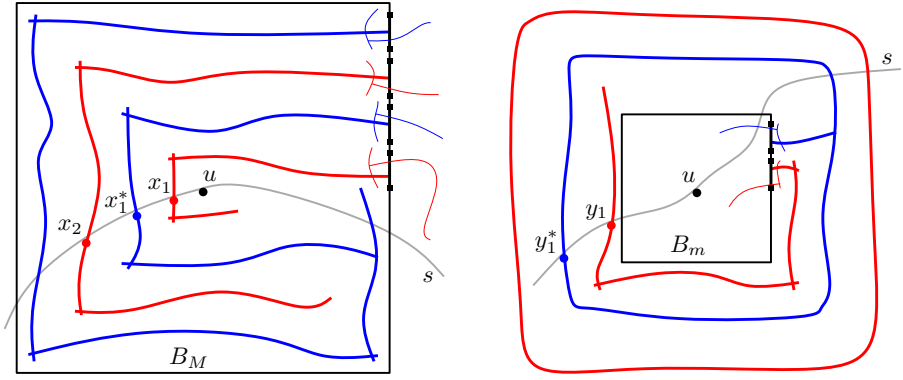}
  \caption{\emph{Left}: The event $H_M$ for $k=4$. 
    The red paths are open, the blue paths are open$^*$. 
    The  thin coloured paths are parts of the interior fences of $\Arm_k^{J,J} (M,m)$.
    \emph{Right}: The event $K_m$ for $k =2$, together with parts of
    the exterior fences of the arm-event. 
    The track $s$ intersects  the open/open$^*$ crossings  
    just above the points labelled $x_i$ and $y_i$.}
  \label{fig:arm_equiv}
\end{figure}

Let $(m, v_i)$ be the lower endpoint of $mJ_i$, and $(m, w_i)$  the upper. 
Let $K_m$ be the event that, for $i \in \{1, 2,\dots,k\}$,  
the following crossings of colour $\sigma_i$ exist:
\begin{letlist}
\item a horizontal crossing of 
  $[m, (m+w_i)]\times[v_i, w_i]$,
\item a vertical crossing of 
  $[ (m+v_i), m+w_i]\times[-(m + w_i), w_i]$,
\item a horizontal crossing of
  $[ -(m+w_i), m+w_i]\times[-(m + w_i), -(m + v_i)]$,
\item if $i$ is odd, a vertical crossing of 
  $[ -(m+w_i), -(m+v_i)]\times[-(m + w_i), m + w_i]$,
\item if $i$ is even, a vertical crossing of 
  $[ -(m+w_i), -(m+v_i)]\times[-(m + w_i), m + w_k]$.
\end{letlist}
We require in addition the following:
\begin{letlist}
\item[(f)] when $k = 1$, an open$^*$ circuit in $\Ann(2m, 3m)$, 
\item[(g)] when $k = 2$, an open circuit in $\Ann(2m, 3m)$,
\item[(h)] when $k \geq 2$, an open$^*$ circuit in $\Ann(m+v_k, m+w_k)$.
\end{letlist}
The event $K_m$ depends only on the configuration inside $\Ann(m, 3m)$, 
and is illustrated in Figure \ref{fig:arm_equiv}.  

Set $j=1$ when $k=1$, and $j=k/2$ when $k \ge 2$.
We claim that, on $H_M \cap K_m \cap \Arm_k^{J,J} (M,m)$,
there exist vertices $x_1,\dots, x_j$, $y_1, \dots,y_j$ and, 
when $k=2$, $x_1^*$, $y_1^*$, 
all adjacent to $s$ and on the same side of $s$ as $u$, such that:
\begin{letlist}
\item   $ x_i \in B_M$, $ y_i \notin B_m$ and, when $k=2$, $ x_1^* \in B_M$, $ y_1^* \notin B_M$,
\item   $x_i \xlra{} y_i$ and $x_i \nxlra{} x_{i'}$ for $i \neq i'$,
\item   $x_1^* \dxlra{} y_1^*$ when $k =2$,
\item   $C_{x_i} \subseteq B_{3m}$ and, when $k=2$, $C^*_{x_1^*} \subseteq B_{3m}$.
\end{letlist}
This claim holds as follows. 
The crossings in the definition of $H_M$ (\resp, $K_m$) may be regarded as extensions 
of the arms of $\Arm_k^{J,J} (M,m) $ inside $B_M$ (\resp, outside $B_m$). 
Let $\la$ be the straight line with inclination 
$\b\in[\frac14 \pi,\frac34 \pi]$, passing through $u$.
Since $s$ corresponds to a chain of rhombi with common sides parallel to $\la$,
it intersects $\la$ 
only in the edge of $G^\di$ crossing $s$ and containing $u$.
Therefore, the part of $s$ to the left of $\la$ necessarily
intersects all the above extensions. 
These intersections provide the $x_i$, $y_i$ and, when $k=2$, $x_1^*$, $y_1^*$. 
The remaining statements above are implied 
by the definitions of the relevant events.

By \eqref{di_euclid}, \eqref{G601}, and \eqref{G604}, 
  \begin{align*}
    H_M \cap K_m \cap \Arm_k^{J,J} (M,m) \subseteq \wt{\Arm}_k(\cd M,\cd^{-1} m).
  \end{align*}
  By the extended Harris--FKG inequality of \cite[Lemma 3]{Kesten87} (see also 
\cite[Lemma 12]{Nolin}), 
  \begin{align*}
    \PP_G \bigl( H_M \cap K_m \cap \Arm_k^{J,J} (M,m) \bigr) \geq 
    \PP_G (H_M) 
    \PP_G ( K_m ) 
    \PP_G [ \Arm_k^{J,J} (M,m) ].
  \end{align*}
  The events $H_M$ and  $K_m$ are given in terms of crossings of boxes with aspect-ratios 
  independent of $M$ and $m$. 
  Therefore, there exists $c_1 > 0$  such that, for 
  $m$ and $M$ sufficiently large,
  $\PP_G ( H_M ) \geq c_1$ and $\PP_G ( K_m ) \geq c_1$.
  In conclusion, by \eqref{G604},
there exists $c_5>0$ such that, for $n/N \ge 2$, 
  \begin{alignat*}{2}
      \PP_G [ \wt{\Arm}_k(N,n)  ]
      &\geq     \PP_G (H_M )   \PP_G ( K_m )  \PP_G [ \Arm_k^{J,J} (M,m) ] \\
      &\geq     c_1^2 c_0 \PP_G [ \Arm_k (M,m) ] &\quad &\text{by \eqref{G603}}\\
      &\geq     c_1^2 c_0 c_5 \PP_G [ \Arm_k (\cd M, \cd^{-1} m) ]\\
&=c_1^2 c_0 c_5 \PP_G [ \Arm_k (N,n) ] & &\text{by \eqref{G604}}
  \end{alignat*}
where the third inequality holds by iteration of \eqref{exp_equiv_a}--\eqref{exp_equiv_b}.
The first inequality of \eqref{exp_equiv_c} follows. 

  The second inequality is simpler. Set $M= \cd N$ and $m= \cd^{-1} n$.
  By  \eqref{di_euclid},
 $\wt{\Arm}_k(\cd^{-1}M,\cd m) \subseteq \Arm_k (M,m)$.
  By iteration of \eqref{exp_equiv_a}--\eqref{exp_equiv_b},
  there exists $c_6 > 0$ such that, for $m>M$ sufficiently large,
  \begin{align*} 
    \PP_G [ \wt{\Arm}_k(\cd^{-1}M,\cd m)  ]
    &\leq \PP_G [  \Arm_k (M,m) ]\\
&\leq c_6 \PP_G [ \Arm_k (\cd^{-1}M,\cd m)].
  \end{align*}
This concludes the proof of Proposition \ref{exp_equiv}.

\subsection{Proof of Theorem \ref{main3}(b)}\label{sec:kesten}

Theorem \ref{main3}(b) follows from 
Theorem \ref{main3}(a) and the following proposition. 

\begin{prop}\label{kesten_scaling}
  Let $G \in \sG$. 
  If either $\rho$ or $\eta$ exists for $G$, 
  then $\rho$, $\eta$, and $\de$ exist for $G$, and they satisfy 
 $\eta\rho=2$ and $2\rho = \de + 1$.
\end{prop}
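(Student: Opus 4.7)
The plan is to derive both scaling relations directly from the \bxp\ of Theorem \ref{main}, by adapting the classical Kesten--Nolin arguments \cite{Kesten87,Nolin} to the inhomogeneous isoradial setting. The key point is that the machinery already assembled in Section \ref{sec:arm-exps} --- namely the \bxp, the separation theorem (Theorem \ref{separation}), and the arm-event comparison (Proposition \ref{exp_equiv}) --- delivers precisely the quasi-independence and uniform regularity required. Throughout, all estimates must be \emph{uniform} in the choice of vertex, since $G$ need not be translation-invariant.

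For the relation $\eta\rho=2$, the plan is to show that, uniformly in $v,w\in V$ with $|v-w|=n$,
\begin{equation*}
  \PP_G(v\lra w)\eqv \PP_G\bigl(v\lra\pd B_n(v)\bigr)^2.
\end{equation*}
The lower bound follows from Harris--FKG: produce two independent one-arm events from $v$ and from $w$ to distance $n/2$, then glue the arm endpoints using a bounded number of box-crossings supplied by Theorem \ref{main}, with the probability of each gluing bounded away from $0$ by the \bxp. The upper bound observes that $\{v\lra w\}$ forces one-arms from both $v$ and $w$ out to distance $n/2$, and the separation theorem combined with \bxp-induced spatial decoupling renders these two one-arm events comparable to independent in the appropriate sense. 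Since $\PP_G(v\lra\pd B_n(v))\eqv n^{-\rho_1}=n^{-1/\rho}$ by definition of $\rho$, one concludes $\PP_G(v\lra w)\eqv |v-w|^{-2/\rho}$, so that $\eta$ exists and equals $2/\rho$. Running the same equivalence in reverse shows that if $\eta$ exists then so does $\rho$, with $\eta\rho=2$.

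For the relation $2\rho=\delta+1$, the plan is to prove that $|C_v|$ is concentrated, conditionally on $\rad(C_v)\ge r$, around $r^2\pi_1(r)$ where $\pi_1(r):=\PP_G(v\lra\pd B_r(v))$. The first moment, $\EE_G[|C_v\cap B_r(v)|\,;\,v\lra\pd B_r(v)]\eqv r^2\pi_1(r)$, follows by summing the two-point function using the equivalence from the previous paragraph. The second moment is controlled by the same two-point asymptotics together with BXP-based gluing, yielding a uniform Paley--Zygmund-type lower bound and thus concentration. With $\pi_1(r)\eqv r^{-1/\rho}$, inverting the relation $n\eqv r^{2-1/\rho}$ gives $r\eqv n^{\rho/(2\rho-1)}$, whence
\begin{equation*}
  \PP_G(|C_v|\ge n)\eqv \PP_G(\rad(C_v)\ge r)\eqv r^{-1/\rho}\eqv n^{-1/(2\rho-1)},
\end{equation*}
so $\delta$ exists and satisfies $\delta=2\rho-1$. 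The passage from $\PP_G(|C_v|\ge n)\eqv n^{-1/\delta}$ to $\PP_G(|C_v|=n)\eqv n^{-1-1/\delta}$ is standard once the tail is regularly varying.

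The hard part is carrying out the second-moment computation without translation invariance. In the homogeneous setting one invokes stationarity freely to estimate $\EE[|C_v|^2\,;\,\rad(C_v)\ge r]$; here the two-point function $\PP_G(x\lra y)$ depends on both endpoints, not merely on $y-x$. What rescues the argument is that the asymptotic equivalence $\PP_G(x\lra y)\eqv |x-y|^{-\eta}$ established above is already uniform in $x,y$, and the separation theorem permits the necessary decoupling of multiple connections to $\pd B_r$ uniformly in base point. Assembling these ingredients carefully is the bulk of the work; once done, both scaling relations and the $\sG$-invariance of $\rho$, $\eta$, $\delta$ (via Theorem \ref{main3}(a)) follow at once.
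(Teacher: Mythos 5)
Your proposal and the paper arrive at the same place but by genuinely different routes. The paper treats this as a black-box invocation: it cites Kesten's original scaling-relations theorem \cite{Kes86,Kes87a} and observes that his proofs go through verbatim for any planar percolation process satisfying four structural conditions --- (a) bounded primal and dual edge lengths, (b) two-sided bounds on vertex density, (c) the \bxp\ for both $\PP_G$ and $\PP_{G^*}$, and (d) a uniform (in the base vertex) comparability of the radius tails $\PP_G(\rad(C_v)>n)$ --- and then verifies that $G\in\sG$ satisfies all four via the \bac\ (for (a),(b)), Theorem~\ref{main} and \eqref{G455}--\eqref{G456} (for (c)), and Proposition~\ref{exp_transport} (for (d)). Condition (d) is precisely the surrogate for translation invariance. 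You instead re-derive Kesten's relations from scratch: quasi-multiplicativity and gluing for $\eta\rho=2$, and a conditional first/second moment argument with Paley--Zygmund for $2\rho=\delta+1$, flagging the loss of stationarity as the main obstacle. What the paper's route buys is brevity and robustness (one only checks hypotheses); what yours buys is self-containment at the cost of re-proving a substantial piece of machinery.

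Your sketch has one place that needs tightening. For the upper bound in $\eta\rho=2$ you invoke the separation theorem to decouple the two one-arm events emanating from $v$ and $w$; but if $|v-w|=n$ the events $\{v\lra\pd B_{n/4}(v)\}$ and $\{w\lra\pd B_{n/4}(w)\}$ live on disjoint edge sets and are independent outright, so no decoupling lemma is needed there --- the separation theorem is needed later, for quasi-multiplicativity of $\pi_1$ (i.e.\ $\pi_1(n/4)\eqv\pi_1(n)$), which you use implicitly without naming. More substantively, the second-moment step for $2\rho=\delta+1$ is stated at a level of generality (\lq\lq controlled by the same two-point asymptotics together with BXP-based gluing\rq\rq) that papers over the real content of \cite{Kes86}: one must bound three-point sums $\sum_{x,y\in B_r}\PP_G(v\lra x,\ v\lra y)$ by a tree-graph/gluing decomposition, and here it is exactly condition (d) above --- uniform comparability of $\pi_1$ across base points --- rather than the uniform two-point asymptotics per se that makes the estimate close without stationarity. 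Since you claim uniformity of $\PP_G(x\lra y)\eqv|x-y|^{-\eta}$ only after you have concluded $\eta$ exists, you should make explicit that the real input you are leaning on is the base-point-uniform comparability of one-arm probabilities supplied by Proposition~\ref{exp_transport}, not a circular appeal to the conclusion.
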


Kesten \cite{Kes87a} proved this statement for the special case
of homogeneous bond percolation on the square lattice, 
using arguments that rely heavily on estimates from the earlier \cite{Kes86}.
As noted in these papers, the conclusions 
may be extended to periodic models satisfying the \bxp.
They may in fact be extended still further,
to planar percolation processes $(G, \PP)$ embedded in $\RR^2$
that satisfy the following:
\begin{letlist}
\item a uniform upper bound on the lengths of both primal and dual edges,
\item a (strictly positive) lower and upper bound on the density of vertices of $G$, 
\item$\PP$ and the measure $\PP^*$ of the dual process have the \bxp,
\item the existence of a constant $c >0$ such that, for $n$ sufficiently large and any vertices $u$, $v$,
  \begin{align*}
    c^{-1} \PP\bigl(\rad(C_u) > n\bigr) \leq \PP\bigl(\rad(C_v) > n\bigr) 
\leq c\PP\bigl(\rad(C_u) > n\bigr).
  \end{align*}
\end{letlist}
Conditions (a) and (b) are satisfied by any isoradial graph with the \bac.
By Theorem \ref{main} and Proposition \ref{exp_transport}, 
conditions (c) and (d) are satisfied by isoradial graphs with the \bac\ and the \sgp.
Proposition \ref{kesten_scaling} follows for general $G \in \sG$. 

No major changes are required in adapting the proofs of \cite{Kes86,Kes87a},
and we include no further details here. 

\section*{Acknowledgements}
The authors thank C\'edric Boutillier, B\'eatrice de Tili\`ere, and Richard Kenyon
for their suggestions concerning this work.
GRG was supported in part by the EPSRC under grant
EP/103372X/1.
IM acknowledges financial support from 
the EPSRC, the Cambridge European Trust,  DPMMS,
and the Peter Whittle Fund, Cambridge University,

\bibliographystyle{amsplain}
\bibliography{iso-final4}

\end{document}